\newcommand\be{\begin{equation}}
\newcommand\ee{\end{equation}}
\newcommand\bea{\begin{eqnarray}}
\newcommand\eea{\end{eqnarray}}
\newcommand\bi{\begin{itemize}}
\newcommand\ei{\end{itemize}}
\newcommand\ben{\begin{enumerate}}
\newcommand\een{\end{enumerate}}
\newcommand\bc{\begin{center}}
\newcommand\ec{\end{center}}
\newcommand\ba{\begin{array}}
\newcommand\ea{\end{array}}
\newcommand{\R}{\ensuremath{\mathbb{R}}}
\newcommand{\C}{\ensuremath{\mathbb{C}}}
\newcommand{\ga}{\alpha}                  
\newcommand{\gl}{\lambda}
\newcommand{\gd}{\delta}
\newcommand{\gO}{\Omega}
\newcommand{\dgO}{\partial\Omega}
\newcommand{\gep}{\epsilon}
\newcommand{\esig}{e^{\sigma}}
\newcommand{\esigz}{e^{\sigma_0}}
\newcommand{\egz}{e^{\gamma_0}}
\newcommand{\Lad}{L^\infty_\mathrm{ad}(\gO)}
\newcommand{\boldrho}{\boldsymbol\rho}
\newcommand{\boldk}{\mathbf{k}}
\newcommand{\kperp}{\mathbf{k}^\perp}
\newcommand{\uboldrho}{u_{\boldsymbol\rho}}
\newcommand{\boldv}{\mathbf{v}}
\newcommand{\boldw}{\mathbf{w}}
\newcommand{\boldeo}{\mathbf{e}_1}
\newcommand{\boldet}{\mathbf{e}_2}
\newcommand{\boldeth}{\mathbf{e}_3}
\newcommand{\boldej}{\mathbf{e}_j}
\newcommand{\boldem}{\mathbf{e}_m}
\newcommand{\usigf}{u_{\sigma,f}}
\newcommand{\Asigf}{A_{\sigma,f}}
\newcommand{\boldAsigf}{\mathbf{A}_{\sigma,\mathbf{f}}}
\newcommand{\boldAsiggamf}{\mathbf{A}_{\sigma,\gamma,\mathbf{f}}}
\newcommand{\CzOprime}{C_0(\overline{\Omega'})}
\newcommand{\CinfzOprime}{C^\infty_0(\overline{\Omega'})}
\newcommand{\Real}{\mathrm{Re\:}}
\providecommand{\norm}[1]{\lVert#1\rVert}
\begin{document}

\title{Stabilizing inverse problems by internal data. II. Non-local internal data and generic linearized uniqueness}
\titlerunning{Inverse problems with internal data. II}

\author{Peter Kuchment         \and
        Dustin Steinhauer 
}


\institute{Peter Kuchment \at
              Mathematics Department, Texas A\&M University, College Station, TX 77843-3368 \\
              \email{kuchment@math.tamu.edu}           
           \and
           Dustin Steinhauer \at
              Mathematics Department, Texas A\&M University, College Station, TX 77843-3368\\
              \email{dsteinha@math.tamu.edu}
}
\maketitle

\date
\begin{abstract}
In the previous paper \cite{KuchStein}, the authors introduced a simple procedure that allows one to detect whether and explain why internal information arising in several novel coupled physics (hybrid) imaging modalities could turn extremely unstable techniques, such as optical tomography or electrical impedance tomography, into stable, good-resolution procedures. It was shown that in all cases of interest, the Fr\'echet derivative of the forward mapping is a pseudo-differential operator with an explicitly computable principal symbol. If one can set up the imaging procedure in such a way that the symbol is elliptic, this would indicate that the problem was stabilized. In the cases when the symbol is not elliptic, the technique suggests how to change the procedure (e.g., by adding extra measurements) to achieve ellipticity.

In this article, we consider the situation arising in \emph{acousto-optical tomography} (also called \emph{ultrasound modulated optical tomography}), where the internal data available involves the Green's function, and thus depends globally on the unknown parameter(s) of the equation and its solution. It is shown that the technique of \cite{KuchStein} can be successfully adopted to this situation as well. A significant part of the article is devoted to results on generic uniqueness for the linearized problem in a variety of situations, including those
arising in acousto-electric and quantitative photoacoustic tomography.
\end{abstract}

\section*{Introduction}
In  \cite{KuchStein}, the authors introduced a simple technique that allows one to see whether a linearized hybrid imaging problem is elliptic, and if not, what additional information can make it so. It consists of the following steps: proving Fr\'echet differentiability, computing the derivative to discover that it is a pseudo-differential operator with an explicitly-determined principal symbol, and finally checking the ellipticity of that operator. This provides a simple, easy-to-apply, uniform view of all cases of interest that we have tried, which used to be considered separately and with different techniques. See, e.g., \cite{Ammari_AET,Ammari_book,bal1,bal3,cap,KuKuAET,widlak} for {\it acousto-electric tomography} (AET), and \cite{bal4,BalRenQPAT,BalUhl,BalExplicit,CoxQPAT} for {\it quantitative
photoacoustic tomography} (QPAT).

Since  \cite{KuchStein}, quite a few works have been dedicated to various extensions, applications, and further studies. One may see, e.g. \cite{BalInternal,Kuch_hybrid,KuRadon} for general overviews and references. As a non-exhaustive list of some recent works one can mention, for instance \cite{Alb_thesis, Alb1, Ammari_elast, Tam, Monard_thesis, Veras_thesis, BalSchBio,Montalto,WidScherz,MosSch,HoffKnu,AleGlobal,MonSt,Ren,bal3,BalGuoMon,BalGuoAnisCurrent,MonBalN3,MonBalD2,MonBalInvDiff,BalZhouMaxwell,
BalMosk,BalInternal,bal1,BalUhlmSolutions,BalHybridRedundant,BalNaetar,MonSt}.

However, several issues were not addressed in \cite{KuchStein} and since: first of all, in some cases the internal information comes from a global functional of the coefficients and solution of the equation (e.g., its Green's function); secondly, the uniqueness of the linearized problem was not considered in such a general setting, and probably does not hold in the whole generality of \cite{KuchStein}; finally, even if the
linearized injectivity were proven, it would not immediately imply stability of the nonlinear problem (although it would be a strong hunch), since the (semi-~)~Fredholm property of the derivative and differentiability were proved in non-matching spaces. (See \cite{StUhl} for a technique of proving nonlinear stability in spite of this discrepancy and \cite{MonSt} for its application to hybrid imaging problems. Another application will be given in the forthcoming paper  \cite{Stein3}. One deals here with a situation where advanced implicit function theorems could be useful \cite{Nash,Nirenberg72,Nirenberg81,HormImpl,HormImpl2}. 

The goal of this text is to overcome some of these deficiencies.

In \cite{KuchStein} we only considered the cases when
the internal information was provided as a function $F(\alpha(x),u(x),\nabla u(x))$,
computed at each internal point $x$, where $\alpha$ indicates here the parameter(s) of
the equation (i.e., conductivity, absorption coefficient, etc.) and $u$ its solution.
In so-called \emph{ultrasound modulated optical tomography} (UMOT)
\cite{AllmBang,bal4, BalMosk,BalSchAOT,Kuch_hybrid, wang2}, as well as in some versions of AET \cite{ScherAET}, internal values are provided for a function, which is dependent
on these variables in a nonlocal manner, e.g. through the Green's function of the equation.
We show in Theorem \ref{T:UMOT} of Section \ref{S:umot} that, at least in the UMOT situation, this does not prevent one
from employing the same simple linearization+microlocal analysis approach.

Then we switch to discussing the generic injectivity issue. In Section \ref{S:general}, we outline a technique based on theory of analytic Fredholm operator functions for proving generic injectivity by employing analytic dependence of the data on the parameters. It uses the fact that, under (semi-)Fredholmity conditions, non-injectivity can happen only at an analytic set of parameters. Thus, if one has a point where injectivity holds, then it has to hold almost everywhere in the connected component of this point (see Theorem \ref{T:ZKKP2} and following corollaries). This technique is then applied in Section \ref{S:uniq} to several examples arising in hybrid imaging methods, in particular in AET (Theorems \ref{T:testcase} and \ref{T:AET}) and QPAT (Theorem \ref{T:QPAT}). The main difficulty here is to figure out the connected component of the good parameters. What helps here is that one can go into the space of complex parameters (wherever Fredholmity is still preserved) and look into the connected component 
there. In other words, two sets of real parameters can be
connected through a complex path. In particular, complex geometrical optics (CGO) solutions are used to achieve this. Section \ref{S:lemmas} contains the proofs of some technical statements. Finally, Sections \ref{S:remarks} and \ref{S:acknow} are devoted to final remarks and acknowledgments, correspondingly.

\section{Ultrasound modulated optical tomography (UMOT): functionals
involving Green's function}\label{S:umot}

We start with introducing some notations used throughout this text. Let $\gO\subset\R^3$ (in some cases, when indicated, higher dimensional situations are also considered) be a smooth bounded region with relatively compact smooth subregions $\gO'\Subset\gO''\Subset\gO$. We consider the operator
\be \label{main_op}
L_\mu (x,D_x)u:=(-\Delta +e^{\mu(x)})u(x),
\ee
where we denote by $e^{\mu(x)}$ the \emph{absorption coefficient}. The \emph{log-absorption coefficient }$\mu(x)$ is used, since the considerations will be then much simpler, in comparison with when dealing with the absorption coefficient subjected to the positivity constraint.

Let also $B$ and $C$ be two boundary-value operators,
each of which is either Dirichlet, or Robin of the form
$D_\nu+\ga(x)$ for a nonnegative function $\ga\in C^\infty(\dgO)$
that is not identically zero.
Then the boundary value problems

\be \label{E:BVP_b}
\begin{cases}
L_\mu u=f \mbox { in }\Omega\\
Bu|_{\partial\Omega}=g
\end{cases}
\ee
and
\be \label{E:BVP_c}
\begin{cases}
L_\mu u=f \mbox { in }\Omega\\
Cu|_{\partial\Omega}=g
\end{cases}
\ee
are elliptic (see the general discussion of the so-called Shapiro-Lopatinsky, or covering conditions that guarantee ellipticity, for instance, in \cite{Lions}).

In UMOT with coherent light illumination, one needs to recover the (log-)absorption $\mu(x)$ using the data
\be \label{E:umotdata} 
F_\mu(\xi)=u_\mu(\xi)G_\mu(\eta,\xi)
\ee
that has been obtained from measurements \cite{AllmBang}.
Here $\xi\in\Omega$ is an arbitrary point in the interior of the domain, $\eta\in\partial\Omega$ is a fixed location of a boundary detector, and the time-averaged light intensity $u$ solves the boundary value problem
\be \label{E:boundarysource}
\begin{cases}
L_\mu u_\mu =0\\
Bu_\mu |_{\dgO}= S(x),
\end{cases}
\ee
with $S(x)\geq0$ describing the (given) light intensity of a source located at the boundary $\partial\Omega$. The function $G_\mu(x,\xi)$ is the Green's function of the boundary value problem (\ref{E:BVP_c}), and the subscript $\mu$ is used to indicate the dependence of the solution $u$ and Green's function on the coefficient of the equation\footnote{Since there has been some controversy about whether the boundary operators $B$ and $C$ coincide, we allow them to be different, which does not influence the results.}, i.e.:
\be \label{E:maineq}
\begin{cases}
L_\mu (x,D_x)G_\mu (x,\xi)=\gd(x-\xi),\quad x, \xi \in\Omega,\\
CG_\mu(y,\xi)=0, y\in\partial\Omega.
\end{cases}
\ee
We have 
\be \label{E:maineqxitoo}
L_\mu (\xi,D_\xi) G_\mu(x,\xi)=\gd(x-\xi)
\ee
as well \cite{LionsNotes}.

We will need the following lemma regarding the positivity of $G_\mu$.

\begin{lemma}\label{L:positive}
 Let $\eta\in\dgO$.  Then there exists a constant $c>0$ such that $G_\mu(\eta,\xi)\geq c$
 for $\xi\in\gO'$.
\end{lemma}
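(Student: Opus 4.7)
The plan is to derive both the positivity and the uniform bound from two classical ingredients: the strong maximum principle for the elliptic operator $L_\mu=-\Delta+e^{\mu(x)}$ and Hopf's boundary point lemma. For the conclusion to be meaningful I take the boundary operator $C$ to be of Robin type $D_\nu+\ga$ (if $C$ were Dirichlet, then $G_\mu(\eta,\xi)$ would vanish identically for $\eta\in\dgO$ and the statement would be empty).

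The first step is to establish $G_\mu(x,\xi)\geq 0$ throughout $\gO$. Since the zeroth-order coefficient $e^{\mu(x)}$ is strictly positive and $\ga\geq 0$, this is standard: $v(x):=G_\mu(x,\xi)$ agrees to leading order near $\xi$ with the (positive) free fundamental solution of $-\Delta$, a negative interior minimum is ruled out by the strong minimum principle for $L_\mu$ with $V=e^\mu>0$, and a negative boundary minimum is ruled out by Hopf's lemma together with the Robin condition $(\partial_\nu+\ga)v=0$.

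Next I would upgrade nonnegativity to strict positivity at the fixed boundary point $\eta$. On the connected set $\gO\setminus\{\xi\}$ the function $v$ solves $L_\mu v=0$ and is nonnegative; by the strong maximum principle it cannot vanish at any interior point without being identically zero, which is incompatible with the delta singularity at $\xi$. Suppose now, for contradiction, that $v(\eta)=0$. Then Hopf's boundary point lemma yields $\partial_\nu v(\eta)<0$, whereas the Robin condition combined with $v(\eta)=0$ forces $\partial_\nu v(\eta)=-\ga(\eta)v(\eta)=0$, a contradiction. Hence $G_\mu(\eta,\xi)>0$ for every $\xi\in\gO$.

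The uniform lower bound then follows by compactness. Since $\overline{\gO'}\Subset\gO$ is at positive distance from $\eta\in\dgO$ and from the diagonal of $G_\mu$, elliptic regularity up to the boundary ensures that $\xi\mapsto G_\mu(\eta,\xi)$ is continuous (in fact smooth) on the compact set $\overline{\gO'}$; being strictly positive there, it attains a minimum $c>0$, which is the desired bound. The crux of the argument is the boundary strict-positivity step — the interior case is immediate from the strong maximum principle, but excluding $G_\mu(\eta,\xi)=0$ relies on the precise interaction between Hopf's lemma and the sign assumption $\ga\geq 0$ built into the Robin operator $C$.
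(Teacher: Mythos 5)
Your proof is correct, but it takes a genuinely different route from the paper's. The paper argues in the $\xi$-variable: it views $G_\mu$ as the Schwartz kernel of $L_\mu^{-1}$ for the boundary value problem (\ref{E:BVP_c}), gets $G_\mu\geq 0$ from the fact that the maximum principle makes $L_\mu^{-1}$ positivity-preserving, gets smoothness of $G_\mu(\eta,\cdot)$ on $\gO'$ from elliptic regularity, and then invokes the strong maximum principle for the nonnegative solution $\xi\mapsto G_\mu(\eta,\xi)$ of $L_\mu u=0$ together with $\gO'\Subset\gO$ to conclude it is bounded away from zero. You instead fix $\xi$, work in the $x$-variable, obtain nonnegativity from the minimum principle plus Hopf's lemma (using the positive singularity at $\xi$), prove strict positivity at the boundary point $\eta$ by playing Hopf's boundary point lemma against the Robin condition $(D_\nu+\ga)G_\mu(\cdot,\xi)=0$, and finish by continuity and compactness in $\xi$. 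What your route buys is precisely the step the paper leaves implicit: for the strong maximum principle to give a positive lower bound one must know that $G_\mu(\eta,\cdot)$ is not identically zero, i.e. that the Green's function does not vanish at the boundary detector point $\eta$; your Hopf argument supplies exactly this, and your explicit restriction to Robin $C$ is in fact forced, since with Dirichlet $C$ one has $G_\mu(\eta,\xi)=0$ for $\eta\in\dgO$ and the lemma as stated cannot hold (the paper allows Dirichlet $C$ but its proof tacitly excludes it). What the paper's route buys is brevity and, as a by-product, the smoothness of $G_\mu(\eta,\cdot)$ that is reused later (e.g. in Theorem \ref{T:UMOT}); your compactness step could cite the same off-diagonal elliptic regularity to justify continuity of $\xi\mapsto G_\mu(\eta,\xi)$ on $\overline{\gO'}$, which is the only point where you gesture at a standard fact rather than prove it.
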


The proof is given in Section \ref{S:lemmas}.


Since $u$ represents the time-averaged light intensity within the object of interest,
our model should imply that $u$ is also a nonnegative function.  To enforce this,
if $B$ is a Robin boundary value operator we add an additional assumption that $u$
is nonnegative on $\dgO$.  Then the maximum
principle implies that $u$ is nonnegative in $\gO$ (in fact, the strong version
of the maximum principle implies that $u$ is strictly positive as long as $S$ is not
identically zero).
If $B$ is Dirichlet, we do not need to add any additional assumptions
to reach these conclusions.

We define the class of ``admissible'' functions $\mu(x)$ as follows:
\be
\Lad:=\{\mu\in L^\infty(\gO)\;\big|\;\mu|_{\gO\backslash\gO'}=0\}.
\ee
The reader notices that this class of functions forces the values of the absorption $\exp(\mu)$ near the boundary to be constant. In turn, this will allow us to work somewhat away from the boundary, which makes things simpler. One can generalize to the case of known (variable) values of $\mu$ near the boundary. However, the well-developed theory of overdetermined elliptic boundary value problems (see \cite{Solon,Spencer,Samb,GudKrein} and Section \ref{S:remarks}) should allow one to relax this condition even further (as it was done, for instance, in \cite{BalHybridRedundant}). We will also have to relax this requirement in Section \ref{S:uniquenessQPAT}.

\begin{lemma}\label{L:green}
Let $\eta\in\dgO$.  Then the map
\be
\mu\in\Lad \rightarrow G_\mu(\eta,\cdot)\in H^2(\gO'').
\ee
is Fr\'echet differentiable at any fixed $\mu_0\in  \Lad$.
\end{lemma}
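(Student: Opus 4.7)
The plan is to exploit the resolvent identity for $L_\mu^{-1}$ and convert it into an identity for Green's functions. Writing $L_\mu = L_{\mu_0} + (e^\mu - e^{\mu_0})$ and using that $e^\mu - e^{\mu_0}$ is supported in $\overline{\Omega'}$ (since $\mu,\mu_0 \in \Lad$), a standard manipulation gives $L_\mu^{-1} - L_{\mu_0}^{-1} = -L_{\mu_0}^{-1}(e^\mu - e^{\mu_0}) L_\mu^{-1}$, hence in terms of Schwartz kernels
\[
G_\mu(x,y) - G_{\mu_0}(x,y) = -\int_{\Omega'} G_{\mu_0}(x,z)\bigl(e^{\mu(z)} - e^{\mu_0(z)}\bigr) G_\mu(z,y)\,dz.
\]
Setting $x = \eta$, writing $h := \mu - \mu_0$, and retaining the first-order Taylor term $e^\mu - e^{\mu_0} \approx e^{\mu_0} h$ (together with $G_\mu \approx G_{\mu_0}$) suggests as candidate Fr\'echet derivative
\[
v'_{\mu_0,h}(\xi) := -\int_{\Omega'} G_{\mu_0}(\eta,z)\, e^{\mu_0(z)}\, h(z)\, G_{\mu_0}(z,\xi)\, dz = -L_{\mu_0}^{-1}\bigl[e^{\mu_0} h\, G_{\mu_0}(\eta,\cdot)\bigr](\xi).
\]
Because $\eta\in\dgO$ is separated from $\overline{\Omega'}$ and $L_{\mu_0} G_{\mu_0}(\eta,\cdot) = 0$ in $\Omega$, interior elliptic regularity delivers $G_{\mu_0}(\eta,\cdot)\in C^\infty(\overline{\Omega'})$, so the bracketed function lies in $L^\infty(\Omega)$ with norm $\lesssim \|h\|_{L^\infty}$. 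The elliptic estimate for the $C$-BVP then yields $\|v'_{\mu_0,h}\|_{H^2(\Omega'')} \lesssim \|h\|_{L^\infty}$, so $h\mapsto v'_{\mu_0,h}$ is a bounded linear map $\Lad \to H^2(\Omega'')$.

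To verify it truly is the Fr\'echet derivative I form the remainder $w := G_\mu(\eta,\cdot) - G_{\mu_0}(\eta,\cdot) - v'_{\mu_0,h}$, substitute the kernel identity above into $G_\mu(\eta,\cdot) - G_{\mu_0}(\eta,\cdot)$, and regroup as
\begin{align*}
w(\xi) = &-\int_{\Omega'} G_{\mu_0}(\eta,z)\bigl(e^\mu - e^{\mu_0} - e^{\mu_0} h\bigr)(z)\, G_\mu(z,\xi)\, dz \\
&- \int_{\Omega'} G_{\mu_0}(\eta,z)\, e^{\mu_0}(z)\, h(z)\bigl(G_\mu(z,\xi) - G_{\mu_0}(z,\xi)\bigr)\, dz.
\end{align*}
The pointwise Taylor bound $|e^\mu - e^{\mu_0} - e^{\mu_0} h| \lesssim \|h\|_{L^\infty}^2$ shows that the first integral is $L_\mu^{-1}$ applied to an $L^\infty$-function of size $O(\|h\|_{L^\infty}^2)$ supported in $\Omega'$, contributing $O(\|h\|_{L^\infty}^2)$ in $H^2(\Omega'')$ by elliptic regularity. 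For the second integral, applying the resolvent identity once more bounds $\|G_\mu(z,\cdot) - G_{\mu_0}(z,\cdot)\|_{H^2(\Omega'')}$ by $O(\|h\|_{L^\infty})$ uniformly in $z\in\overline{\Omega'}$, which together with the extra factor of $h$ yields another $O(\|h\|_{L^\infty}^2)$. Summing the two gives $\|w\|_{H^2(\Omega'')} = O(\|h\|_{L^\infty}^2)$, which is the required Fr\'echet differentiability.

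The main technical obstacle is to make the boundary evaluation $G_\mu(\eta,\cdot)$ and the use of the kernel identity at $x=\eta$ fully rigorous. The natural route is to interpret $\xi\mapsto G_\mu(\eta,\xi)$ as the Riesz representer of the bounded linear functional $f\mapsto (L_\mu^{-1} f)(\eta)$ on $L^2(\Omega)$, whose continuity follows from the $H^2\hookrightarrow C^0$ embedding in three dimensions combined with Shapiro-Lopatinsky regularity for the (Robin) $C$-BVP, and then to verify that the resolvent identity passes to this limit. A secondary issue is securing bounds on $L_\mu^{-1}: L^2(\Omega)\to H^2(\Omega)$ that are uniform as $\mu$ ranges over a bounded set in $\Lad$; this follows from standard a priori estimates for elliptic BVPs with bounded coefficients together with the coercivity provided by the strictly positive zero-order term $e^\mu$.
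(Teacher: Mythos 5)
Your argument is correct in substance, but it takes a genuinely different route from the paper's. The paper never writes a resolvent expansion for the kernel: it establishes differentiability of $\mu\mapsto L_\mu^{-1}$ as an operator-valued map, passes to the Schwartz kernel via the Hilbert--Schmidt kernel theorem to get differentiability of $\mu\mapsto G_\mu$ into $L^2(\gO\times\gO)$, upgrades regularity off the diagonal (elliptic estimates in $x$ on $\gO\backslash\gO''$ and in $\xi$ on $\gO'$, giving control of $G$ in $H^2((\gO\backslash\gO'')\times\gO')$ by $\norm{G}_{L^2(\gO\times\gO)}$), and finally composes with the linear evaluation map $G\mapsto G(\eta,\cdot)$, whose continuity comes from a H\"older-in-$x$ Sobolev embedding argument; differentiability is thus transported through bounded linear maps, and no explicit derivative formula is needed at this stage (the paper computes it only formally in the proof of Theorem \ref{T:UMOT}). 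You instead expand the second resolvent identity at the kernel level, identify the derivative explicitly as $h\mapsto -L_{\mu_0}^{-1}\left[e^{\mu_0}h\,G_{\mu_0}(\eta,\cdot)\right]$, and estimate the remainder directly, which buys you a quantitative $O(\norm{h}_{L^\infty}^2)$ bound in $H^2(\gO'')$ and a rigorous derivation of the very formula the paper later uses; the price is that your uniform-in-$z$ bounds across the diagonal (square integrability of the $|z-\xi|^{-1}$ singularity) and the evaluation at $\eta$ lean on $H^2\hookrightarrow C(\overline{\gO})$, i.e. on $n\le 3$, which matches the paper's setting. Two small repairs: with $\mu_0\in\Lad$ merely bounded, interior elliptic regularity gives $G_{\mu_0}(\eta,\cdot)$ in $W^{2,p}$ locally, hence bounded on $\overline{\gO'}$, but not $C^\infty$ (that would need smooth $\mu_0$) --- boundedness is all you actually use; and the passage of the kernel identity to $x=\eta$, which you only sketch, does need to be executed, though your Riesz-representer formulation handles it cleanly by pairing both sides against $f\in L^2(\gO)$ and evaluating $L^{-1}_{\mu}f$ and $L^{-1}_{\mu_0}f$ at $\eta$ via the same embedding, together with uniform $L^2\to H^2$ bounds for $\mu$ near $\mu_0$, which you correctly flag.
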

The proof of this statement can be found in Section \ref{S:lemmas}.

Let now $\chi(x)$ be a smooth function, equal to $1$ in a neighborhood of $\overline{\Omega' }$ and zero outside $\Omega''$. We also denote by $dF$ the Fr\'echet derivative guaranteed by the previous lemma.

We can now formulate the main result of this section:
\begin{theorem}\label{T:UMOT}
If $F(\mu)(\xi)=u_\mu(\xi)G_\mu(\eta,\xi)$, then
\begin{enumerate}
 \item $\chi dF\chi$ is a pseudo-differential operator of order $-2$, elliptic in a neighborhood of $\overline{\gO'}$;

 \item $dF$ is Fredholm as an operator from $L^2(\gO')$ into $H^2(\gO')$;

 \item The kernel  $K\subset L^2(\gO')$ of $dF$ is finite dimensional.

 \item The operator $dF$ generates a topological isomorphism from $L^2(\gO')/K$ into $H^2(\gO')$, i.e.
there is a constant $C$ such that

\be \frac{1}{C} \norm{\mu_1}_{L^2(\gO')/K} \leq \norm{dF(\mu_1)}_{H^2(\gO')} \leq C\norm{\mu_1}_{L^2(\gO')/K} \ee
\end{enumerate}
\end{theorem}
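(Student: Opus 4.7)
The plan is to compute $dF$ explicitly, recognize it as a sum of two compositions of multiplication operators with Green's inverses for $L_\mu$, and then read off ellipticity at the level of principal symbols. Starting from $F(\mu)(\xi)=u_\mu(\xi)G_\mu(\eta,\xi)$, I differentiate in $\mu$ (using the previous lemma for $G_\mu$, and a parallel argument for $u_\mu$ which is standard since $u_\mu$ solves an elliptic BVP with smooth coefficient dependence) to obtain
\begin{equation}
dF[\delta\mu](\xi) = (du_\mu[\delta\mu])(\xi)\,G_\mu(\eta,\xi) + u_\mu(\xi)\,(dG_\mu(\eta,\cdot)[\delta\mu])(\xi).
\end{equation}
Differentiating $L_\mu u_\mu=0$, $Bu_\mu=S$ gives $L_\mu(du_\mu[\delta\mu])=-e^\mu u_\mu\,\delta\mu$ with $B(du_\mu[\delta\mu])=0$, so $du_\mu[\delta\mu]=-L_{\mu,B}^{-1}(e^\mu u_\mu\,\delta\mu)$, where $L_{\mu,B}^{-1}$ is the solution operator with the $B$-boundary condition. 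Similarly, differentiating \eqref{E:maineq} (viewed as a function of $\xi$, using self-adjointness / \eqref{E:maineqxitoo}) yields $dG_\mu(\eta,\cdot)[\delta\mu]=-L_{\mu,C}^{-1}(e^\mu G_\mu(\eta,\cdot)\,\delta\mu)$.

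Next, I introduce the cutoff $\chi$ and study $\chi\,dF\,\chi$. Since input perturbations live in $\Lad$ and are supported in $\gO'$, and since $\chi\equiv1$ there, the cutoff does not change the operator on these spaces; but it allows me to work in a region bounded away from $\dgO$. On that region, both $L_{\mu,B}^{-1}$ and $L_{\mu,C}^{-1}$, when sandwiched between $\chi$'s, are classical pseudo-differential operators of order $-2$ with principal symbol $1/|\xi|^2$ (the boundary layer contributions are smoothing in the interior). Composition with the $0$-order multiplications $G_\mu(\eta,\cdot)$, $u_\mu$, and $e^\mu$ preserves order and multiplies at the level of principal symbols. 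Adding the two terms,
\begin{equation}
\sigma_{-2}(\chi\,dF\,\chi)(x,\xi) \;=\; -\,\frac{2\,e^{\mu(x)}\,u_\mu(x)\,G_\mu(\eta,x)}{|\xi|^2}.
\end{equation}
To conclude ellipticity on a neighborhood of $\overline{\gO'}$, I invoke strict positivity of each factor: $e^{\mu(x)}>0$ trivially, $G_\mu(\eta,x)\geq c>0$ on $\gO'$ is exactly Lemma \ref{L:positive}, and $u_\mu>0$ in $\gO$ follows from the strong maximum principle applied to $L_\mu u_\mu=0$ with the hypothesis that $u_\mu\geq0$ on $\dgO$ and $S\not\equiv 0$. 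This proves (1).

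Claim (2) then follows from standard elliptic theory: an elliptic pseudo-differential operator of order $-2$, whose symbol is elliptic on $\overline{\gO'}$, admits a left and right parametrix of order $+2$ modulo smoothing, hence modulo compact operators between $L^2(\gO')$ and $H^2(\gO')$, which yields the Fredholm property; (3) is immediate from Fredholmness. For (4), Fredholmness plus the bound $\|dF(\mu_1)\|_{H^2(\gO')}\leq C\|\mu_1\|_{L^2(\gO')}$ from the mapping properties gives a bounded operator $L^2(\gO')/K\to H^2(\gO')$ with closed range, which is injective by construction; Banach's open mapping theorem then supplies the reverse estimate.

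The main obstacle I anticipate is the careful justification that $\chi L_{\mu,B}^{-1}\chi$ and $\chi L_{\mu,C}^{-1}\chi$ are genuine order $-2$ $\Psi$DO's with the stated principal symbol despite the boundary-condition dependence of the Green's kernels; this rests on the fact that the Poisson-type corrections that enforce boundary conditions contribute smoothing operators between cutoffs supported in the interior, and on the regularity of $\mu$ and $u_\mu$ forced by $\mu\in\Lad$. Once that is in hand, the rest is bookkeeping.
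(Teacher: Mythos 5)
Your proposal is correct and follows essentially the same route as the paper: linearize $u_\mu$ and $G_\mu(\eta,\cdot)$ through the equations they satisfy, use that the interior cutoffs of the solution operators are order $-2$ pseudo-differential operators with symbol $|\xi|^{-2}$ (parametrix, boundary corrections smoothing), obtain the principal symbol $-2e^{\mu}u_\mu G_\mu(\eta,\cdot)/|\xi|^2$, and conclude ellipticity from Lemma \ref{L:positive} and the maximum-principle positivity of $u_\mu$, with (2)--(4) following by standard Fredholm theory. The only differences are expository: you make the product-rule splitting into the two solution operators $L_{\mu,B}^{-1}$, $L_{\mu,C}^{-1}$ explicit, where the paper runs a formal perturbation expansion, and note (as the paper does implicitly via its ``smoothness of $\mu_0$'' remark) that the symbolic calculus is applied at a smooth background rather than a general element of $L^\infty_{\mathrm{ad}}(\Omega)$.
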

\begin{proof}
Having established the Fr\'echet differentiability of the Green function with respect to $\mu$ in Lemma \ref{L:green}, we can find the derivative $dF$ by a formal calculation.  Consider a small perturbation of $\mu_0$ and the corresponding perturbation of the Green's function:

\be\label{E:gperturb}
\begin{cases}
\mu=\mu_0+\gep\mu_1\\
G_\mu(\eta,\xi)=G_0(\eta,\xi)+\gep G_1(\eta,\xi)+o(\gep),
\end{cases}
\ee
where $G_0(x,\xi)$ solves the boundary value problem (\ref{E:maineq}) with the coefficient $\mu_0$.
The elliptic regularity and smoothness of $\mu_0$ imply that $G_0(\eta,\xi)$ is a smooth function on $\gO'$.

Since the function $G_\mu$ satisfies the equation
\be
L_\mu G_\mu(\eta,\cdot)=0,
\ee
we can use (\ref{E:gperturb}) to find that $G_1$ solves the equation

\be
L_{\mu_0}(\xi,D_\xi) G_{\mu_1}(\eta,\xi)=-\mu_1 (x)e^{\mu_0(x)}G_0(\eta,\xi)
\ee
on $\gO'$.

Let $\zeta$ be the dual (Fourier) variable to $\xi$.
The operator $\chi L_{\mu_0}\chi$ has a parametrix with the principal symbol $\chi^2(\xi)(\zeta)^{-2}$.

Let $u_\mu(x)$ satisfy (\ref{E:boundarysource}). The mapping $\mu\mapsto u_\mu$ is Fr\'echet differentiable.  This fact is well known, and a proof can be found
as a special case of Lemma 2.1 in \cite{KuchStein} when the boundary condition in equation (\ref{E:BVP_b}) is Dirichlet.  The proof
given there generalizes easily to the other boundary conditions we allow.  The derivative $u_{(1)}$ comes from the formal expansion

\be u_\mu=u_0+\gep u_{(1)}+o(\gep) \ee
as in (\ref{E:gperturb}).  Here, $u_0$ solves (\ref{E:boundarysource}) with coefficient $\mu_0$.
Thus the mapping $\mu\mapsto F(\mu)$ is a Fr\'echet differentiable mapping from $\Lad\rightarrow L^2(\gO')$,
and $A(\xi,D_\xi)(\mu)=\chi dF(\chi\mu)$ as a pseudo-differential operator on $\R^n$ has principal symbol

\be A(\xi,\zeta)=-\frac{2\chi^2(\xi) e^{\mu_0}(\xi) u_0(\xi) G_0(\eta,\xi)}{\zeta^2}\;. \ee
(We remind the reader that we are using $\zeta$ as the dual variable to $\xi$.)
Both $u_0$ and $G_0$ are bounded below by positive constants on $\gO'$ by Lemma \ref{L:positive},
so $A(\xi,D)$ is elliptic on $\gO'$ of order $-2$.  The rest of the conclusions immediately follow.\qed
\end{proof}

We are now changing gears, switching to the problem of generic linearized uniqueness. In the next Section we list some known abstract notions and results that we will rely upon. One can find this information in various sources, e.g. in \cite{GoKr,Kato,Krein,ZKKP}.

\section{Analytic Operator Function Preliminaries}\label{S:general}

Theorem \ref{T:UMOT} and results of \cite{KuchStein} show that linearizations of various functionals arising in internal data problems
are Fredholm or left semi-Fredholm operators in appropriate Banach spaces (all Banach spaces here will be assumed being complex). We thus need to recall some definitions and facts from the theory of such operators and operator-valued functions.
\begin{definition}\indent
\begin{itemize}
\item A continuous linear operator $A\in L(E,F)$ between two Banach spaces is said to be \textbf{Fredholm}, if it has closed range and finite-dimensional kernel and co-kernel.
\item It is called \textbf{left semi-Fredholm} if it has finite-dimensional kernel and closed and complementable\footnote{I.e., having a closed complementary subspace. This condition is satisfied automatically in Hilbert spaces.} range.
\item We denote the spaces of Fredholm and left semi-Fredholm operators acting between Banach spaces $E$ and $F$ by $\Phi(E,F)$ and $\Phi_l(E,F)$, respectively.
\end{itemize}
\end{definition}
Another interpretation, useful when working with pseudo-differential operators, due to the parametrix construction, is in the following well-known proposition:
\begin{lemma}\label{L:fredholm}\indent
\begin{itemize}
\item An operator $A\in L(E,F)$ is Fredholm, iff it has a (two-sided) \textbf{regularizer}, i.e. an operator $B\in L(F,E)$ such that operators $AB-I$ and $BA-I$ are compact.
\item An operator $A\in L(E,F)$ is left semi-Fredholm iff it has a \textbf{left regularizer}, i.e. an operator $B\in L(F,E)$ such that operator $BA-I$ is compact\footnote{This claim applies to any Banach space, since in the definition of being left Fredholm operator we required complementability of the range. Without this requirement, existence of the left regularizer would not necessarily hold in spaces not isomorphic to Hilbert ones, although the converse statement would still be correct.}.
\end{itemize}
\end{lemma}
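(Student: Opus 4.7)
The plan is to prove each equivalence by explicit construction of a regularizer from the (semi-)Fredholm hypothesis in one direction, and by invoking the Riesz--Schauder theory for compact perturbations of the identity in the converse.

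For the forward direction of both (1) and (2), I would exploit that finite-dimensional subspaces of Banach spaces are always complemented, writing $E = \ker A \oplus E_1$ with $E_1$ closed; the (semi-)Fredholm hypothesis also provides a closed complement $F_1$ of $\mathrm{im}(A)$ in $F$, finite-dimensional in the Fredholm case. The open mapping theorem then turns $A|_{E_1}: E_1 \to \mathrm{im}(A)$ into a topological isomorphism, and defining $B: F \to E$ to be $(A|_{E_1})^{-1}$ on $\mathrm{im}(A)$ and zero on $F_1$ yields $BA - I = -P_{\ker A}$ of finite rank, together with $AB - I = -P_{F_1}$ of finite rank in the Fredholm case. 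Both are compact, so $B$ is the required (left) regularizer.

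For the converse direction of (1), assume $K_1 := BA - I$ and $K_2 := AB - I$ are compact. By classical Riesz--Schauder theory, $BA = I + K_1$ and $AB = I + K_2$ are Fredholm of index zero, which at once gives $\ker A \subset \ker(BA)$ finite-dimensional and $\mathrm{im}(A) \supset \mathrm{im}(AB)$ of finite codimension. Closedness of $\mathrm{im}(A)$ follows from the standard sequential argument: pick $E_1$ a closed complement of $\ker A$, choose $x_n \in E_1$ with $A x_n \to y$; then $(I + K_1) x_n = BA x_n \to By$, and compactness of $K_1$ combined with a rescaling argument (the unbounded case yielding a unit vector in $E_1 \cap \ker A = \{0\}$, a contradiction) extracts a subsequential limit $x \in E_1$ with $Ax = y$.

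The converse direction of (2) is handled analogously for the kernel-finiteness and closed-range conclusions, relying only on $BA - I$ compact. The substantive step, and the main obstacle, is to establish complementability of $\mathrm{im}(A)$ in $F$ -- precisely the point that fails in non-Hilbert Banach spaces if this property is not built into the definition. I would attack this by choosing a Fredholm parametrix $J: E \to E$ with $J(I + K_1) = I - P$, where $P$ is a finite-rank projection onto $\ker(I + K_1)$, and examining $AJB: F \to F$. A direct calculation, well-defined since $\ker A \subset \ker(I + K_1)$, shows that $AJB|_{\mathrm{im}(A)} = I_{\mathrm{im}(A)} - L$, where the defect $L$ has values in the finite-dimensional subspace $A(\ker(I + K_1)) \subset \mathrm{im}(A)$. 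Extending the coordinate functionals of $L$ to $F$ via Hahn--Banach yields a finite-rank operator $\tilde L: F \to \mathrm{im}(A)$, and $T := AJB + \tilde L$ then maps $F$ into $\mathrm{im}(A)$ while restricting to the identity on $\mathrm{im}(A)$; hence $T^2 = T$, exhibiting $\mathrm{im}(A)$ as the image of a continuous projection and therefore complemented in $F$.
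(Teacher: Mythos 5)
Your proof is correct. The paper itself does not prove this lemma at all: it is stated as a ``well-known proposition'' and implicitly referred to the standard literature cited at the end of Section 1 (Gohberg--Krein, Kato, ZKKP), so your argument should be compared with the classical Atkinson-type proof rather than with anything in the text. Your forward directions (splitting $E=\ker A\oplus E_1$, $F=\mathrm{im}\,A\oplus F_1$, inverting $A|_{E_1}$ by the open mapping theorem, and reading off $BA-I$ and $AB-I$ as finite-rank projections) and your converse for the Fredholm case via Riesz--Schauder plus the bounded-below/rescaling argument for closed range are the standard route and are sound. The one genuinely delicate point is the converse of the second bullet: with the paper's definition of left semi-Fredholm, which \emph{includes} complementability of the range (see the paper's footnote, which exists precisely because of this issue), you must produce a bounded projection of $F$ onto $\mathrm{im}\,A$ from the single relation $BA=I+K_1$. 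Your construction does this correctly: the generalized inverse $J$ with $J(I+K_1)=I-P$ exists because $I+K_1$ is Fredholm of index zero, the computation $AJB(Ax)=Ax-APx$ is well defined on $\mathrm{im}\,A$ since $\ker A\subset\ker(I+K_1)$, the defect $L$ has finite rank with values in $A(\ker(I+K_1))\subset\mathrm{im}\,A$ and is continuous (being $I-AJB$ restricted to $\mathrm{im}\,A$), so its coordinate functionals extend by Hahn--Banach, and $T=AJB+\tilde L$ is then a bounded idempotent with range exactly $\mathrm{im}\,A$. This also gives an alternative proof of closedness of the range, so your separate sequential argument is not even needed there. In short: a complete, self-contained proof of a statement the paper leaves to references, with the correct treatment of exactly the subtlety the authors flag in their footnote.
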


It is easy to derive from this lemma the following statement:
\begin{corollary}\label{C:vector}
If an operator $A_1\in L(E,F_1)$ is left semi-Fredholm, then the vector operator
$$
A:=(A_1, \ldots, A_k) \in L(E,\bigoplus\limits_{j=1}^k E_k)
$$
is also left semi-Fredholm.
\end{corollary}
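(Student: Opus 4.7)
The plan is to construct a left regularizer for the vector operator $A$ and then appeal to Lemma \ref{L:fredholm}. Since $A_1$ is left semi-Fredholm, that lemma supplies some $B_1 \in L(F_1,E)$ with $B_1 A_1 - I$ compact on $E$.

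With $B_1$ in hand, I would define $B \in L\bigl(\bigoplus_{j=1}^k F_j,\, E\bigr)$ by
\[
B(f_1,\ldots,f_k) \,:=\, B_1 f_1,
\]
which is clearly bounded. Composition gives $BA = B_1 A_1$, so $BA - I$ is the same compact operator on $E$ as before, and hence $B$ is a left regularizer for $A$. The "if" direction of Lemma \ref{L:fredholm} then concludes that $A$ is left semi-Fredholm.

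If one prefers to unwind the definition directly, the three required ingredients can also be checked by hand: the finite-dimensionality of $\ker A$ is immediate from the inclusion $\ker A \subseteq \ker A_1$; closedness of $\mathrm{Range}(A)$ follows from the standard fact that an operator possessing a left regularizer has closed range; and complementability of the range, in the general Banach setting, can be extracted from the same regularizer via a routine construction using $I-AB$ on the direct sum.

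The only real subtlety, and the likely source of any friction, is keeping track of the complementability clause that is built into the definition of left semi-Fredholm used here. In Hilbert spaces it is automatic, while in the Banach setting it is precisely what makes Lemma \ref{L:fredholm} a characterization rather than a one-way implication. Once that bookkeeping is accepted, the corollary reduces to the one-line observation that enlarging the codomain by a direct sum cannot destroy the existence of a left regularizer: one simply reads off the first component and ignores the rest.
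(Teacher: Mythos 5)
Your proof is correct and is exactly the derivation the paper intends: the paper states the corollary as an easy consequence of Lemma \ref{L:fredholm} without writing out details, and composing the projection onto the first summand with a left regularizer $B_1$ of $A_1$ is the intended one-line argument. Nothing further is needed (modulo noting that the codomain in the statement should read $\bigoplus_j F_j$, which you implicitly and correctly assume).
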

We will use a more detailed version of this statement for pseudo-differential operators later.

As it happens, the operators arising in this text, as well as in \cite{KuchStein}, depend analytically on the coefficients of the equation under study and on the boundary data used. One deals here with infinite-dimensional analyticity, since these data belong to (complex) function spaces. Because we are interested in injectivity of these operators for generic coefficients and boundary values, the following fact, which is a special case of \cite[Theorem 4.13]{ZKKP}, comes in handy:

\begin{theorem}
 \label{T:ZKKP2}
Let $X$ be a connected Banach analytic manifold (e.g., a connected open domain in a complex Banach space) and let $E$ and $F$ be complex Banach spaces.
\ben
\item
Let $A:X\rightarrow\Phi(E, F)$ be an analytic
map, such that $A(z_0)$ is invertible for some $z_0\in X$.  Then $A(z)$ is invertible (and thus has zero kernel) except for $z$ lying in a proper analytic
subset of $X$.
\item
Let $A:X\rightarrow\Phi_l(E, F)$ be an analytic
map such that $A(z_0)$ is left-invertible for some $z_0\in X$.  Then $A(z)$ is left-invertible (and thus has zero kernel) except for $z$ lying in a proper analytic subset of $X$.
\een
\end{theorem}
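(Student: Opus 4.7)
The plan is to combine two classical ingredients: the local reduction of an analytic Fredholm operator family to a finite-dimensional determinant, and the identity principle for analytic subsets of a connected complex Banach manifold. Since the theorem is explicitly billed as a special case of \cite[Theorem 4.13]{ZKKP}, the proof amounts to spelling out how that general result specializes in the Fredholm and left semi-Fredholm settings.

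For part (1), I would first work locally. Near any point $z_1 \in X$, the standard analytic Fredholm theorem (see \cite{GoKr,Kato}) produces a neighborhood $U$, splittings $E = E_1 \oplus E_2$ and $F = F_1 \oplus F_2$ with $E_1, F_1$ finite-dimensional of equal dimension, and an analytic matrix-valued function $M : U \to L(E_1, F_1)$ such that $A(z)$ is invertible iff $M(z)$ is invertible, equivalently iff $\det M(z) \neq 0$. Hence the set $Z := \{z \in X : A(z) \text{ is not invertible}\}$ is locally cut out by the vanishing of a scalar analytic function, so it is locally either a proper analytic subset or a whole neighborhood.

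To globalize, I would invoke connectedness of $X$. The hypothesis that $A(z_0)$ is invertible forces $\det M_0(z_0) \neq 0$ in the local model at $z_0$, so $Z$ misses an open neighborhood of $z_0$. The identity principle for analytic subsets of a connected Banach analytic manifold then prevents $Z$ from filling any component, so $Z$ is a proper analytic subset of $X$, finishing part (1).

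For part (2), the same outline applies with invertibility replaced by left-invertibility. Persistence near $z_0$ is a soft Neumann-series step: if $B_0$ is a left inverse of $A(z_0)$, then $B_0 A(z) = I + B_0 (A(z) - A(z_0))$ is invertible on a neighborhood of $z_0$, so $A(z)$ is left-invertible there with left inverse $(B_0 A(z))^{-1} B_0$. For the local reduction near a general $z_1$, one chooses $F_1$ as a closed complement of the range of $A(z_1)$ using complementability, and $E_1$ as a complement of $\ker A(z_1)$; the resulting finite-dimensional block $M(z)$ now encodes injectivity of $A(z)$, and again failure of injectivity is an analytic condition. Globalization then proceeds exactly as in part (1). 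The main technical point, and the step I expect to be the principal obstacle, is this last local reduction in the semi-Fredholm case, where the cokernel may be infinite-dimensional and complementability of the range is essential; once set up correctly, the scalar-analytic identity-principle argument runs unchanged.
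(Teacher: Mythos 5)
Your outline is essentially sound, but be aware that the paper itself does not prove Theorem \ref{T:ZKKP2}: it is quoted as a special case of Theorem 4.13 of \cite{ZKKP}, with only a remark that the local statement does not require the Stein hypothesis of the original, so your Schur-complement-plus-identity-principle argument is a genuinely different, self-contained route rather than a reconstruction of the paper's proof. What your route buys is independence from the ZKKP machinery; what it needs is more care at two places. In part (1), at an arbitrary $z_1$ the two finite-dimensional spaces in the local model have equal dimension only because the index is locally constant and hence, by connectedness of $X$ and invertibility at $z_0$, identically zero; state this, since otherwise the reduction of invertibility to $\det M(z)\neq 0$ is unjustified. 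In part (2) the Schur complement $S(z)=c(z)-d(z)b(z)^{-1}a(z)$ maps $\ker A(z_1)$ into a closed complement $C$ of the range, and $C$ may be infinite dimensional, so there is no ``finite-dimensional block $M(z)$'' and no single determinant; failure of injectivity is instead the simultaneous vanishing of all $k\times k$ minors $\det\bigl(\langle\phi_i,S(z)e_j\rangle\bigr)$, $\phi_1,\dots,\phi_k\in C^{*}$, $k=\dim\ker A(z_1)$, i.e.\ the common zero set of a \emph{family} of analytic functions --- which is exactly what the paper's definition of an analytic subset permits, and the identity-principle/open-closed globalization still works provided you apply it to the whole family on connected charts. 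You should also record explicitly that for $A(z)\in\Phi_l(E,F)$ injectivity is equivalent to left-invertibility precisely because the definition used here builds complementability of the closed range into $\Phi_l$ (open mapping theorem onto the range, then compose with the projection), so the non-left-invertibility set really is the non-injectivity set your local model describes. With these adjustments your proof is correct.
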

 Here we used the following definition:
 \begin{definition} A set $Y\subset X$ is said to be \emph{analytic}, if it can be locally represented as the set of common zeros of a family of analytic functions. It is \emph{proper}, if at least one of these functions is not identically equal to zero (in other words, a proper analytic subset has a positive codimension in $X$).
 \end{definition}

The original, stronger version of theorem \ref{T:ZKKP2}, provided in \cite{ZKKP}, requires $X$ to be a Stein manifold, which would be an impediment in our case, due to the non-existence of Stein infinite dimensional Banach manifolds. For its local part stated above, however, $X$ does not have to be Stein.

%

We will use the results of Theorem \ref{T:ZKKP2} in the following clearly equivalent form:
\begin{corollary}\indent
\ben
\item
Let $A:X\rightarrow\Phi(E, F)$ be an analytic
map, such that $A(z_0)$ is injective for some $z_0\in X$.  Then $A(z)$ is \textbf{generically} injective, i.e. the set of points $z\in X$ where $A(z)$ is non-injective is a proper analytic subset of $X$.
\item
Let $A:X\rightarrow\Phi_l(E, F)$ be an analytic
map such that $A(z_0)$ is injective for some $z_0\in X$.  Then $A(z)$ is \textbf{generically} injective, i.e. the set of points $z\in X$ where $A(z)$ is non-injective is a proper analytic subset of $X$.
\een
\end{corollary}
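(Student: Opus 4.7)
The plan is to deduce both parts of the Corollary directly from Theorem \ref{T:ZKKP2} via the following observation: \emph{an operator $A\in\Phi_l(E,F)$ is injective if and only if it is left-invertible.} The nontrivial direction goes as follows. Suppose $A\in\Phi_l(E,F)$ is injective. Since $A$ has closed and complementable range, we have a topological decomposition $F=AE\oplus F_0$ with $F_0$ closed. The restriction $A\colon E\to AE$ is then a continuous linear bijection between Banach spaces, so by the open mapping theorem its inverse is bounded. Extending this inverse by zero on $F_0$ produces a bounded left inverse of $A$ in $L(F,E)$. The converse is trivial.

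With this observation, Part 2 of the Corollary is immediate. The hypothesis that $A(z_0)\in\Phi_l(E,F)$ is injective is equivalent to left-invertibility, which is exactly the hypothesis of Theorem \ref{T:ZKKP2}(2). That theorem therefore yields a proper analytic subset $Y\subset X$ off of which $A(z)$ is left-invertible, and hence \emph{a fortiori} injective.

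For Part 1, I would appeal to the inclusion $\Phi(E,F)\subset\Phi_l(E,F)$: a Fredholm operator has closed range, and since its cokernel is finite-dimensional, any algebraic complement of the range is automatically a topological complement. Thus an analytic map $A\colon X\to\Phi(E,F)$ is automatically analytic as a map into $\Phi_l(E,F)$, and Part 2 of the Corollary (just proved) gives the conclusion. Alternatively, to apply Part 1 of the Theorem directly, one can project onto the finite-dimensional quotient $F/F_0$, where $F_0$ is a complement of $\mathrm{Range}(A(z_0))$, to obtain an analytic family $\tilde A(z)\colon E\to F/F_0$ of Fredholm operators of index zero with $\tilde A(z_0)$ invertible; the exceptional set for $\tilde A$ then contains the set where $A$ fails to be injective.

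There is essentially no substantive obstacle here, which is why the authors label the corollary a ``clearly equivalent form.'' The only point that deserves a sentence of care is that analyticity of $z\mapsto A(z)$ as a map into $\Phi(E,F)$ or $\Phi_l(E,F)$ is the same thing as analyticity as a map into the ambient Banach space $L(E,F)$ whose image happens to land in the designated subset; this is immediate from the definitions and so requires no extra work.
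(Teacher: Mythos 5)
Your proposal is correct and follows essentially the same route as the paper, which states the corollary without proof as a ``clearly equivalent form'' of Theorem \ref{T:ZKKP2}: the whole content is the observation that for operators in $\Phi_l(E,F)$ (and hence, via the inclusion $\Phi(E,F)\subset\Phi_l(E,F)$, for Fredholm operators as well) injectivity is equivalent to left-invertibility, so the theorem's exceptional analytic set contains the non-injectivity set. Your explicit handling of Part 1 through Part 2 of the theorem (since an injective Fredholm operator need not be invertible) is exactly the intended reduction, just spelled out.
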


Another corollary that we will need deals with the real situation (since eventually we need results dealing with real functional parameters).
Namely, we will be interested in the case when $Y$ is a connected open domain in a complex Banach space $E_C$ that is complexification $E_C=E+iE$ of a real Banach space $E$. We assume that $Y$ has a non-empty intersection with the real subspace.
For an open set $Y$ in such a Banach space, we denote by $\Real Y$ the intersection of $Y$ with $E$.
\begin{corollary}\label{C:real} Under the conditions above,
\begin{enumerate}
\item
Let $A:\Real Y\rightarrow\Phi(E, F)$ be an analytic
map, such that $A(z_0)$ is injective for some $z_0\in \Real Y$.  Then $A(z)$ is \textbf{generically} injective in $\Real Y$, i.e. the set of points $z\in \Real Y$ where $A(z)$ is non-injective is a proper (i.e., of positive codimension) analytic subset of $\Real Y$.
\item
Let $A:\Real Y\rightarrow\Phi_l(E, F)$ be an analytic
map such that $A(z_0)$ is injective for some $z_0\in \Real Y$.  Then $A(z)$ is \textbf{generically} injective in $\Real Y$, i.e. the set of points $z\in \Real Y$ where $A(z)$ is non-injective is a proper (i.e., of positive codimension) analytic subset of $\Real Y$.
\end{enumerate}
\end{corollary}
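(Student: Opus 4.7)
The plan is to reduce Corollary~\ref{C:real} to the immediately preceding complex-variable corollary by complexifying $A$, applying the complex result on an open tube neighborhood of $\Real Y$ inside $E_C$, and then intersecting the resulting exceptional analytic set with $\Real Y$. The real work lies in the last step, which is a Banach-analytic identity-principle argument ensuring that the intersection remains a proper analytic subset.

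First I would extend $A$ complex-analytically to a complex open neighborhood of the connected component of $z_0$ in $\Real Y$. Since $A$ is real-analytic on $\Real Y$ with values in the complex space $L(E,F)$, at each $z\in\Real Y$ its Taylor expansion has continuous complex-multilinear coefficients that define an analytic function on some ball of $E_C$ centered at $z$. Local extensions at nearby real points agree on overlaps (by the identity-principle argument described below, applied to their difference), so they glue to a complex-analytic extension on an open tube around $\Real Y$ in $E_C$. Because $\Phi(E,F)$ and $\Phi_l(E,F)$ are open in $L(E,F)$, I would shrink the tube so the extension stays in the (left) Fredholm class. Intersecting with $Y$ and passing to the connected component of $z_0$ yields a connected complex Banach analytic manifold $\tilde Y\subset Y$ and a complex-analytic map $\tilde A:\tilde Y\to\Phi(E,F)$ (resp.\ $\Phi_l(E,F)$) extending $A|_{\Real Y\cap\tilde Y}$. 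The complex corollary following Theorem~\ref{T:ZKKP2} then provides a proper complex-analytic subset $\tilde Z\subset\tilde Y$ outside of which $\tilde A(z)$ is injective.

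The main obstacle is the descent from $\tilde Z$ to $\Real Y$: one must show $\tilde Z\cap\Real Y$ is a proper real-analytic subset of $\Real Y$, i.e., that $\tilde Z$ cannot swallow the whole real slice. The crucial identity-principle fact is that if a complex-analytic function $f$ on the connected open set $\tilde Y$ vanishes on a nonempty open subset of $\Real Y\cap\tilde Y$, then at any base point $z$ in that subset the continuous complex-multilinear Taylor coefficients of $f$ vanish on every real tuple of increments in $E$, hence by complex multilinearity on every complex tuple in $E_C$; so $f$ vanishes on a complex neighborhood of $z$ and, by connectedness of $\tilde Y$, identically on $\tilde Y$. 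Applying this contrapositively to a local defining family of $\tilde Z$, not all members of which can be identically zero by the properness of $\tilde Z$, yields real-analytic restrictions that are not identically zero on any open subset of $\Real Y$ near a real point. Hence $\tilde Z\cap\Real Y$ is a proper real-analytic subset of the component of $\Real Y$ containing $z_0$. The Fredholm and left semi-Fredholm cases differ only in which part of the preceding complex corollary is invoked, so both statements of Corollary~\ref{C:real} follow.
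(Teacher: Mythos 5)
Your proof is correct in substance, and its decisive step coincides with the paper's own argument: the paper proves Corollary~\ref{C:real} in two sentences, noting that the real trace of the (complex) non-injectivity set is real-analytic and that, if it contained an open subset of $E$, analytic continuation would force non-injectivity on the whole connected complex domain, contradicting injectivity at $z_0$ --- which is exactly the identity-principle descent you spell out in your final paragraph (and your details there, vanishing of the complexified Taylor coefficients from vanishing on real tuples, properness of the restricted defining family, are sound). Where you genuinely diverge is the first step: you read the hypothesis as ``$A$ is merely real-analytic on $\Real Y$'' and therefore build a complex-analytic extension on a tube around the component of $z_0$, checking via openness of $\Phi(E,F)$ and $\Phi_l(E,F)$ that the values stay (semi-)Fredholm, before invoking the complex corollary; the paper instead treats $A$ as the restriction of a map analytic on the given connected complex domain $Y$ (this is how the corollary is actually applied later, through Lemmas~\ref{L:dependence} and \ref{L:qpatdependence}), so no extension has to be constructed. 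This difference affects the scope of the conclusion: your self-built tube only reaches the connected component of $z_0$ in $\Real Y$, so you obtain properness of the non-injectivity set only within that component, whereas under the paper's reading the continuation can travel through the connected complex domain $Y$ and the conclusion covers all of $\Real Y$ even when the real slice is disconnected --- the ability to join distinct real parameter sets through complex paths being precisely the feature exploited in Theorems~\ref{T:XY}, \ref{T:XYaet}, and \ref{T:XYqpat}. (Under your purely real reading the statement for a disconnected $\Real Y$ could not hold in general, so restricting to the component of $z_0$ is unavoidable; it is worth stating that caveat explicitly, or adopting the paper's reading, when using the corollary.)
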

This statement follows from the observations that, first, the intersection of the non-injectivity set with the real subspace $E$ is real analytic and, second, if this real analytic set contains an open subset in $E$, then a simple analytic continuation argument shows that the non-injectivity set must cover the whole $X$, which is a contradiction. \qed

\begin{remark}
An analog of this corollary also holds, without any essential change in the proof, if $E_C$ is replaced by a connected complex analytic Banach manifold and $E$ with a maximal totally real real-analytic submanifold in $E_C$.
\end{remark}

It will be sometimes easier for us to establish analyticity of the linearizations with values in a larger space of operators (i.e., in a weaker norm) than what we will need.  This will not cause any problems, due to the following simple lemma:

\begin{lemma} \label{L:ana_wrong_spaces}
Let $X$ be a complex analytic Banach manifold.  Let $E_1$, $E_2$, $F_1$, and $F_2$ be complex Banach spaces such that there are dense continuous embeddings
$E_1\hookrightarrow E_2$ and $F_2\hookrightarrow F_1$.  Let $A:X\rightarrow L(E_1,F_1)$
be an analytic map that is also locally uniformly bounded (in the operator norm) as a map from $X$ to $L(E_2,F_2)$.  Then
$A$ is an analytic map from $X$ to $L(E_2,F_2)$.
\end{lemma}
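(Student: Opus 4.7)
The plan is to reduce to one complex variable, extract Taylor coefficients via the Cauchy integral in the weaker space $L(E_1, F_1)$, and then show these coefficients extend to bounded operators in $L(E_2, F_2)$ with the correct norm decay. First, analyticity of $A: X \to L(E_2, F_2)$ can be tested on holomorphic disks through each point: fix $z_0 \in X$ and a holomorphic map $\phi: \mathbb{D}_r \to X$ with $\phi(0) = z_0$, and put $B(\lambda) := A(\phi(\lambda))$. Shrinking $r$ if necessary, the hypothesis gives $\|B(\lambda)\|_{L(E_2, F_2)} \leq M$ on $\overline{\mathbb{D}_r}$ while $B$ is $L(E_1, F_1)$-analytic there.

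From the $L(E_1, F_1)$-analyticity I would expand $B(\lambda) = \sum_{n \geq 0} c_n \lambda^n$, with $c_n = (2\pi i)^{-1} \oint_{|\zeta| = r'} \zeta^{-n-1} B(\zeta)\, d\zeta \in L(E_1, F_1)$ as a Bochner integral in $L(E_1, F_1)$ for any $r' < r$. The target estimate is $\|c_n\|_{L(E_2, F_2)} \leq M/(r')^n$, since absolute convergence of $\sum c_n \lambda^n$ in $L(E_2, F_2)$ for $|\lambda| < r'$ will then yield $L(E_2, F_2)$-analyticity of $B$, and hence of $A$ as $z_0$ and $\phi$ vary.

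For the estimate on $c_n$, fix $x \in E_1$ and $\ell \in F_1^*$. Dualizing the continuous dense embedding $F_2 \hookrightarrow F_1$ gives a continuous injection $F_1^* \to F_2^*$, so the scalar Cauchy formula produces $|\ell(c_n x)| \leq \|\ell\|_{F_2^*}\, M \|x\|_{E_2}/(r')^n$. Interpreted as a bound on the linear functional $\ell \mapsto \ell(c_n x)$ on $F_1^* \subset F_2^*$ in the $F_2^*$-norm, this extends by Hahn--Banach to all of $F_2^*$ with the same bound, producing an element of $F_2^{**}$ of norm $\leq M\|x\|_{E_2}/(r')^n$ that agrees with $c_n x$ on $F_1^*$. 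Because $F_1^*$ separates $F_1 \supset F_2$, this extension is identified with $c_n x$ itself as an element of $F_2$, giving $\|c_n x\|_{F_2} \leq M \|x\|_{E_2}/(r')^n$. Density of $E_1$ in $E_2$ then extends $c_n$ to a well-defined operator in $L(E_2, F_2)$ of the required norm.

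The main obstacle is the last identification step: the Hahn--Banach extension lives a priori in $F_2^{**}$, and equating it with $c_n x \in F_2$ requires the reflexivity of $F_2$, which is automatic in the Sobolev- and $L^p$-based applications intended here ($1 < p < \infty$). In full generality one could instead invoke the Arendt--Nikolski / Grosse-Erdmann weak criterion for vector-valued holomorphy, applied to the separating family $\{T \mapsto \ell(T x) : x \in E_1,\ \ell \in F_1^*\}$ of continuous linear functionals on $L(E_2, F_2)$, which circumvents the reflexivity hypothesis.
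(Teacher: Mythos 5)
Your argument is correct, but it takes a different route from the paper's. The paper reduces the statement to \emph{weak} analyticity: by Kato (Ch.~3, Thm.~1.37), a locally bounded operator-valued map is analytic as soon as $z\mapsto\langle f^*,A(z)e\rangle$ is analytic for all $e\in E_2$, $f^*\in F_2^*$; these scalar functions are then obtained as locally uniform limits of the analytic functions $\langle f_n^*,A(z)e_n\rangle$ with $e_n\in E_1\to e$ and $f_n^*\in F_1^*\to f^*$, the locally uniform convergence coming precisely from the local uniform bound in $L(E_2,F_2)$. You instead work quantitatively on one-dimensional disks: Cauchy-integral Taylor coefficients $c_n$ in $L(E_1,F_1)$, the estimate $|\ell(c_nx)|\le \|\ell\|_{F_2^*}M\|x\|_{E_2}/(r')^n$, and a Hahn--Banach plus separation argument to upgrade $c_n$ to an element of $L(E_2,F_2)$ with geometric norm decay, after which the series representation gives analyticity (and you correctly note that Gateaux analyticity plus local boundedness suffices on the manifold). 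Your route buys explicit Cauchy estimates on the derivatives, at the cost of the $F_2^{**}$-versus-$F_2$ identification, which you honestly flag and which indeed needs reflexivity of $F_2$ (harmless here, since in all applications $F_2$ is $L^2$ or a Sobolev space). It is worth noting that the paper's proof carries an analogous quiet assumption: approximating $f^*\in F_2^*$ in norm by restrictions of elements of $F_1^*$ requires norm density of $F_1^*$ in $F_2^*$, which again holds in the reflexive settings used but not in complete generality; your proposed fallback via the weak holomorphy criterion for a merely separating family of functionals (Arendt--Nikolski/Grosse-Erdmann), applied to $T\mapsto\ell(Tx)$ with $x\in E_1$, $\ell\in F_1^*$, repairs both caveats at once and is a legitimate strengthening of the lemma as stated.
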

The assumption that $A$ is locally uniformly bounded means that for every $z\in X$ there exist $M(z)$, $r(z)>0$
such that $\norm{A(w)}_{L(E_2,F_2)}<M(z)$ if $\norm{w-z}<r(z)$.

\begin{proof}
It suffices to show that $A$ is weakly analytic into $L(E_2,F_2)$, namely for every $e\in E_2$ and
$f^*\in F_2^*$ the function $<f^*,A(z)e>$ is analytic \cite[Chapter 3, Theorem 1.37]{Kato}.

Given $e\in E_2$ and $f^*\in F_2^*$,
let ${e_n}\in E_1$ be a sequence converging to $e$ and let $f_n^*\in F_1^*$ converging to $f^*$.
Since $A:X\rightarrow L(E_1,F_1)$ is analytic, the functions $<f_n^*,A(z)e_n>$ are analytic in $X$.
Furthermore, we have

\bea |<f_n^*,A(z)e_n>&-&<f^*,A(z)e>|\nonumber\\
&=&|<f_n^*-f^*,A(z)e_n>+<f^*,A(z)(e_n-e)>|\nonumber\\
&\leq&\norm{f_n^*-f^*}_{F_2^*}\norm{A(z)}_{L(E_2,F_2)}\norm{e_n}_{E_2}\nonumber\\
&+&\norm{f^*}_{F_2^*}\norm{A(z)}_{L(E_2,F_2)}\norm{e_n-e}_{E_2}   \; .\eea

Since $\norm{A(z)}_{L(E_2,F_2)}$ is locally bounded, the functions  $<f_n^*,A(z)e_n>$ converge
locally uniformly to $<f^*,A(z)e>$.
Hence the limit $<f^*,A(z)e>$ is analytic. \end{proof}

We will now use these abstract results for proving generic linearized uniqueness for some hybrid imaging problems.

\section{Generic linearized uniqueness in hybrid imaging problems}\label{S:uniq}

The injectivity for the linearized operators most probably does not hold in the wide generality of \cite{KuchStein} or Section \ref{S:umot}. However, it is expected to hold for ``generic'' parameters of the problems under consideration. Proving this is the goal of the section.

The word ``genericity'' can mean different things. It could be used in terms of Baire category, or ``almost everywhere'' (in a space with a measure), or in the meaning of ``with probability one'' (in a space with a probability measure). Another, stronger, level is reached in ``open and dense subset'' genericity. Probably the strongest and most productive is ``except for an analytic set,'' since it allows one to use transversality theorems (see, e.g., \cite{Lang,arnold}) to make conclusions for generic families--not just single operators. We aim for this stronger version, but our technique of pseudo-differential operators with infinitely smooth symbols happens to be an obstacle here. As it will be shown in the next publication \cite{Stein3}, this obstacle comes from the techniques used, rather from the substance. Using pseudo-differential calculi with symbols of finite smoothness (see, e.g., \cite{TaylorTools}) resolves this. In this paper, though, we will not go that far and stop at the ``open and dense''
set
level.

\subsection{Hybrid inverse conductivity problems}

In inverse conductivity problems one is concerned with recovering the \emph{log-conductivity} $\sigma$ in the boundary value problem
\be\label{E:invcondeqn}
\begin{cases}
-\nabla\cdot(e^\sigma\nabla\usigf)=0\\
\usigf|_{\dgO}=f\;.
\end{cases}
\ee
(We use the notation $\usigf$ to emphasize the dependence of the solution $u$ on $(\sigma, f)$.)
We are interested in recovering $\sigma$ from some internal data. In many inverse conductivity problems one ends up having the interior data  of the form.
\be\label{E:newF}
F(\sigma)(x)=e^{2\sigma(x)/p}|\nabla \usigf(x)|^2
\ee
for $p>0$ fixed.
\begin{remark}
In
\cite{KuchStein} and in the majority of the literature, see e.g. \cite{Kuch_hybrid,BalExplicit},
the data functionals
\be\label{E:oldF}
\tilde{F}(\sigma)(x)=e^{\sigma(x)} |\nabla \usigf(x)|^p
\ee
were studied. It was noticed that the different ranges of values of $p$ lead to rather different techniques and indeed results. However, raising the expression (\ref{E:oldF}) to the power $2/p$, one arrives at (\ref{E:newF}). Although this does not eliminate dependence on $p$ in various results, it shows that some technical difficulties in dealing with  (\ref{E:oldF}) were artifacts of the form the expression was written. This is due to the more benign (indeed, quadratic) dependence of (\ref{E:newF}) on $\nabla \usigf$.
\end{remark}

Given a smooth log-conductivity $\sigma\in\Lad$ and $f\in H^{1/2}(\dgO)$, the corresponding linearization $\Asigf$ takes the form

\be \label{E:Asigma}
\Asigf(\rho)=dF(\rho)=\frac{2}{p} e^{2\sigma/p}\left(\rho|\nabla \usigf|^2+p\nabla \usigf\cdot\nabla v(\rho)\right),
\ee
where $\rho\in L^2(\gO')$ and $v(\rho)\in H^1_0(\gO)$ solves the equation
\be \label{E:v}
-\nabla\cdot(e^\sigma\nabla v(\rho))=\nabla\cdot(\rho e^\sigma\nabla \usigf).
\ee
We can now allow the log-conductivity $\sigma(x)$ and boundary value $f$ to be complex (which does not undermine ellipticity of the problem). Using the notation $a\cdot b=\sum a_jb_j$ for the bilinear product of complex vectors, we can rewrite (\ref{E:Asigma}) for complex values of parameters $\sigma$ and $f$ as
\be \label{E:Asigma_comple}
\Asigf(\rho)=dF(\rho)=\frac{2}{p} e^{2\sigma/p}\left(\rho\nabla \usigf\cdot\nabla \usigf +p\nabla \usigf\cdot\nabla v(\rho)\right).
\ee
Then
one can establish the analytic dependence of $\Asigf$ on $(\sigma, f)$.

\begin{lemma}\label{L:dependence}
 The map

\bea \Lad\times H^{1/2}(\dgO)&\rightarrow& L(\Lad,L^1(\gO')) \nonumber\\
(\sigma,f)&\mapsto& \Asigf \eea
is analytic.
\end{lemma}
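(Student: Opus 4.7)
The plan is to verify the analyticity claim directly from the explicit formula (\ref{E:Asigma_comple}). I will decompose $A_{\sigma,f}(\rho)$ into a finite sum of products whose factors depend analytically on $(\sigma,f)$ and at worst linearly on $\rho$. Because compositions of analytic maps between complex Banach spaces with continuous multilinear maps remain analytic, it then suffices to check analyticity of each individual factor, and to check that the final pointwise multiplications land in $L^1(\gO')$.

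The first building block is the multiplier $e^{2\sigma/p}$. Its Taylor series $\sum_n (2\sigma/p)^n/n!$ converges absolutely in $L^\infty(\gO)$ on every bounded subset of the complexified $\Lad$, so $\sigma\mapsto e^{2\sigma/p}$ (and similarly $\sigma\mapsto e^\sigma$) is an entire analytic map into $L^\infty(\gO)$. The second is the forward solution operator. Setting $L_\sigma u := -\nabla\cdot(e^{\sigma}\nabla u)$ as a bounded map $H^1_0(\gO)\to H^{-1}(\gO)$, the previous observation makes $\sigma\mapsto L_\sigma$ an entire $L(H^1_0,H^{-1})$-valued function. At any real $\sigma_0\in\Lad$ the operator $L_{\sigma_0}$ is an isomorphism by Lax--Milgram; on a complex neighborhood of $\sigma_0$ the Neumann series for
\be
L_\sigma^{-1}=L_{\sigma_0}^{-1}\bigl(I+(L_\sigma-L_{\sigma_0})L_{\sigma_0}^{-1}\bigr)^{-1}
\ee
therefore exhibits $L_\sigma^{-1}$ as a locally analytic $L(H^{-1},H^1_0)$-valued function. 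Fixing any bounded lifting $R:H^{1/2}(\dgO)\to H^1(\gO)$ and writing
\be
u_{\sigma,f} \;=\; Rf \;-\; L_\sigma^{-1}\bigl(\nabla\cdot(e^\sigma \nabla Rf)\bigr),
\ee
we obtain $(\sigma,f)\mapsto u_{\sigma,f}$ analytic into $H^1(\gO)$, and in particular $\nabla u_{\sigma,f}$ analytic into $L^2(\gO)^3$. The corrector $v(\rho)=L_\sigma^{-1}\bigl(\nabla\cdot(\rho e^\sigma \nabla u_{\sigma,f})\bigr)$ is then linear in $\rho\in\Lad$ and jointly analytic in $(\sigma,f)$ into $H^1_0(\gO)$, so the assignment $(\sigma,f)\mapsto \bigl(\rho\mapsto\nabla v(\rho)\bigr)$ is an analytic $L\bigl(\Lad,\,L^2(\gO)^3\bigr)$-valued function.

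Assembling these pieces according to (\ref{E:Asigma_comple}) expresses $A_{\sigma,f}(\rho)$ as the output of the continuous pointwise multiplications $L^\infty\times L^\infty\to L^\infty$ and $L^\infty\times L^2\times L^2\to L^1$; restricting the target to $\gO'$ yields the required analyticity into $L(\Lad, L^1(\gO'))$. The main technical obstacle is the analytic invertibility of $L_\sigma$, which is cleanly handled by the Neumann-series step above once the exponential multiplier has been shown entire; all remaining difficulties are bookkeeping about which Banach spaces the various factors live in, and Lemma \ref{L:ana_wrong_spaces} is available as a fallback should one prefer to first establish analyticity in weaker spaces and then upgrade the target.
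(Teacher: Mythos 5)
Your proposal is correct and takes essentially the same approach as the paper's proof: both reduce the claim to analyticity of $(\sigma,f)\mapsto u_{\sigma,f}$ and of $\sigma\mapsto L_\sigma^{-1}$ and then observe that the remaining operations assembling $\Asigf$ are algebraic/continuous multilinear, the only difference being that the paper cites \cite{KuchStein} and the analyticity of operator inversion where you supply the exponential series and a Neumann-series argument directly. The one slip is cosmetic: with your convention $L_\sigma u=-\nabla\cdot(e^\sigma\nabla u)$, the lifting formula should read $u_{\sigma,f}=Rf+L_\sigma^{-1}\bigl(\nabla\cdot(e^\sigma\nabla Rf)\bigr)$, a sign change that does not affect the argument.
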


This lemma is proved in Section \ref{S:lemmas}.

Now, as in \cite{KuchStein} and other studies, the considerations and results start depending on the value of $p$. We thus concentrate on various ranges of values of $p$.

\subsection{The case when $0<p<1$}
\indent

This is, as it was seen in \cite{KuchStein,MonSt}, the simplest (although, maybe the least applicable) situation.

Our approach to investigating the invertibility of $dF$ will depend on the dimension.
Indeed, the situation is simpler in dimension 2 than in higher dimensions. The reason is that in dimension $2$ it is possible to
select two boundary conditions $f_j$ in (\ref{E:invcondeqn}) such that for any $\sigma$ the gradients of the corresponding solutions
are linearly independent \cite{ale}.  Such a choice is not always possible in higher dimensions \cite{lau}.

When $n=2$,
we will assign a boundary condition $f=x_1$.
After showing that $A_{0,x_1}$ is invertible,
an application of Theorem \ref{T:ZKKP2} will
then show that $\Asigf$ is invertible on an open and dense subset of $\sigma\in C_0(\gO')$.

When $n\geq 3$, we might need more measurements..
Namely, let $m\geq n$ and let us denote by $\mathbf{f}$ a set of $m$ Dirichlet boundary data $(f_1,\ldots, f_m)$. We introduce now a vector operator as follows:
\be
\boldAsigf = \left(\begin{array}{c}
A_{\sigma,f_1}\\
\vdots	\\
A_{\sigma,f_m}\\
\end{array}\right).
\ee
We define the following sets:
\begin{definition}\indent
\begin{itemize}
\item $X_m$ is the set of all real-valued pairs
$$
(\sigma,\mathbf{f})\in \Real\left(\CinfzOprime\times H^{1/2}(\dgO)^m\right)
$$ such that
the gradients
$$
\nabla u_{\sigma,f_1},\ldots, \nabla u_{\sigma,f_m}
$$
of the corresponding solutions of (\ref{E:invcondeqn}) span the whole space $\R^n$ at every point $x$ in
a neighborhood of $\overline{\gO''}$.

This will be the set of ``good'' $m$-tuples of measurements, for which left semi-Fredholmity holds.

\item $\overline{X_m}$ is the closure of $X_m$ in $\Real\left(\CzOprime\times H^{1/2}(\dgO)^m\right)$.
\item $Y_m$ is the set of (possibly complex-valued) pairs
$$
(\sigma,\mathbf{f})\in \CinfzOprime\times H^{1/2}(\dgO)^m
$$
 such that
 $$
 \boldAsigf\in\Phi_l(L^2(\gO),L^2(\gO)^m)\;.
 $$
 \item $Y_m^0$ is the connected component of $Y_m$ containing the point $(0,\mathbf{f}_0)$, where
 $\mathbf{f}_0=(x_1,\ldots,x_n,\ldots)$ with
 the second ``$\ldots$'' representing $m-n$ arbitrarily chosen real functions.
 \item $\overline{Y_m^0}$ is the closure of $Y_m^0$ in $C_0(\overline{\gO'})\times H^{1/2}(\dgO)^m$.
\end{itemize}
\end{definition}

The idea here is to show that any two points in the set $X_m$ can be connected by a path through the domain of complex material parameters, while preserving semi-Fredholmity. This is exactly what the next theorem claims.
\begin{theorem}\label{T:XY}
Let the sets $X_m$, $Y_m$, and $Y_m^0$ be as above.  Then
\begin{enumerate}
\item all of these sets are non-empty;
\item $X_m\subset Y_m^0$.
\end{enumerate}
\end{theorem}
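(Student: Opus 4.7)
Non-emptiness of all three sets reduces to a single model computation at the base point $(0,\mathbf{f}_0)$. With $\sigma=0$ and $\mathbf{f}_0=(x_1,\dots,x_n,\dots)$, equation (\ref{E:invcondeqn}) becomes $\Delta u = 0$ with boundary data $x_j$; the solutions are $u_j(x)=x_j$, and the first $n$ gradients $\nabla u_j = \mathbf{e}_j$ span $\R^n$ throughout $\gO$, so $(0,\mathbf{f}_0)\in X_m$. The same spanning renders the vector principal symbol elliptic: for $p<1$ each scalar operator $A_{0,x_j}$ is elliptic (since $|\nabla u_j|=1$), so the vector operator $\mathbf{A}_{0,\mathbf{f}_0}$ is left semi-Fredholm by Corollary \ref{C:vector}. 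Thus $(0,\mathbf{f}_0)\in Y_m$ and, by definition, lies in $Y_m^0$. The identical principal-symbol analysis, applied at an arbitrary $(\sigma,\mathbf{f})\in X_m$, already yields the inclusion $X_m\subset Y_m$.

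The heart of the statement is the upgrade $X_m\subset Y_m \Rightarrow X_m\subset Y_m^0$: given $(\sigma_1,\mathbf{f}_1)\in X_m$, one must produce a continuous path in $Y_m$ from $(0,\mathbf{f}_0)$ to $(\sigma_1,\mathbf{f}_1)$. The naive real homotopy $t\mapsto(t\sigma_1,(1-t)\mathbf{f}_0+t\mathbf{f}_1)$ cannot be expected to stay in $Y_m$, since the spanning of the gradients may degenerate at intermediate $t$. The remedy anticipated in the introduction is to take a detour through complex parameters, bridged by complex geometrical optics.

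For each real $\sigma\in\Lad$, I would choose vectors $\zeta_1,\dots,\zeta_n\in\C^n$ with $\zeta_j\cdot\zeta_j=0$ and $|\zeta_j|$ sufficiently large to guarantee standard CGO solutions $u_{\sigma,\zeta_j}(x)=e^{\zeta_j\cdot x}(1+R_{\sigma,\zeta_j}(x))$ with small remainder, and let $\mathbf{g}(\sigma)$ be the $m$-tuple of their boundary traces, padded arbitrarily. The leading terms $\zeta_j e^{\zeta_j\cdot x}$ of the gradients are pointwise linearly independent in $\overline{\gO''}$, and $\sigma\mapsto\mathbf{g}(\sigma)$ is analytic. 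The desired path then splits into three segments: (a) at fixed $\sigma=\sigma_1$, deform $\mathbf{f}$ from $\mathbf{f}_1$ to $\mathbf{g}(\sigma_1)$ through complex boundary data; (b) set $\sigma(t)=(1-t)\sigma_1$ and $\mathbf{f}(t)=\mathbf{g}(\sigma(t))$, where spanning holds by construction; (c) at $\sigma=0$, deform $\mathbf{f}$ from $\mathbf{g}(0)$ back to $\mathbf{f}_0$.

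The main obstacle is to ensure that segments (a) and (c) do not leave $Y_m$. Although both endpoints of each segment lie in the open spanning region, a straight-line interpolation may cross the closed ``bad'' set on which $\det(\nabla u_{\sigma,f_j}(x))$ vanishes for some $x\in\overline{\gO''}$. This bad set is the projection of an analytic hypersurface in $(\sigma,\mathbf{f},x)$-space and has positive codimension in the complex $\mathbf{f}$-space. By the complex-analytic principle underlying Corollary \ref{C:real} -- the complement of a proper analytic subset of a connected open region in a complex Banach space is path-connected -- a generic complex perturbation $\mathbf{f}(t)+it(1-t)\mathbf{h}$ of the straight line avoids the bad set, supplying the required path and placing $(\sigma_1,\mathbf{f}_1)$ in $Y_m^0$.
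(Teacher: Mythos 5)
The architecture of your path (three segments, with a CGO bridge while $\sigma$ is shrunk to $0$) matches the paper's, but the step you yourself identify as the main obstacle --- keeping segments (a) and (c) inside $Y_m$ --- is not actually closed by your argument, and this is exactly where the paper's real work lies. Your fix invokes ``the complement of a proper analytic subset is path-connected,'' but the bad set here is not known to be a proper analytic subset of the parameter space: membership in $Y_m$ (or failure of the sufficient symbol condition) is governed by a quantifier over the compact set $\overline{\gO''}\times S^{n-1}$, i.e.\ the bad set is a projection, along the real variables $(x,\xi)$, of an analytic set in $(\sigma,\mathbf{f},x,\xi)$; such projections are closed but not analytic, so Corollary \ref{C:real}-type connectivity does not apply, and ``choose a generic $\mathbf{h}$'' would in any case require a parametric transversality argument over all $t\in[0,1]$ that you do not supply. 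The paper avoids genericity altogether: along the explicit interpolation $f_{j,t}=(1-t)f_j+itf^I_{\sigma,\boldrho}$ it computes the \emph{imaginary part} of the principal symbol to leading order in $\rho$ (equations (\ref{E:im1stpart})--(\ref{E:thebigzero})) and shows it is nonzero for $0<t<1$ because the bracket vector $(\kperp-p(\xi\cdot\kperp)\xi)\cos\theta+(\boldk-p(\xi\cdot\boldk)\xi)\sin\theta$ is nonzero precisely when $p<1$, while the spanning of the real gradients $\nabla u_{\sigma,f_j}$ guarantees that at each $(x,\xi)$ at least one component sees this nonvanishing. That explicit use of $p<1$ in the homotopy is the key idea missing from your proposal; without it you have no control at intermediate $t$.

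A secondary problem is segment (b). For complex data the relevant condition is not ``spanning of gradients'' but nonvanishing of the bilinear symbol $\nabla u\cdot\nabla u-p(\nabla u\cdot\xi)^2$. For a full CGO solution $e^{\zeta_j\cdot x}(1+R)$ with $\zeta_j\cdot\zeta_j=0$, the first term degenerates at leading order, and $-p(\zeta_j\cdot\xi)^2$ vanishes whenever $\xi$ is orthogonal to the real plane of $\zeta_j$, so ``ellipticity by construction'' does not follow and the $O(\rho)$ remainders are not controlled there. The paper circumvents this by deforming to $i$ times the \emph{imaginary part} $u^I_{\boldrho}$ of a single CGO solution: then $\nabla u\cdot\nabla u=-|\nabla u^I_{\boldrho}|^2$ with $\nabla u^I_{\boldrho}\neq0$ everywhere (since $\cos\theta$ and $\sin\theta$ cannot vanish simultaneously), so each component is elliptic throughout the $\sigma$-shrinking segment. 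Your non-emptiness argument and the observation $X_m\subset Y_m$ for $p<1$ are fine, but the proof of $X_m\subset Y_m^0$ as written has a genuine gap.
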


Rather than proving Theorem \ref{T:XY}, we postpone its proof and derive from it the main result of this subsection:

\begin{theorem}\label{T:testcase}
\indent
\ben
\item Let $n=2$ and $f=x_1$.  Then the operator $\Asigf$ is invertible as an operator on $L^2(\gO')$ for an open dense set
of $\sigma\in \Real\CzOprime$.
\item Let $m\geq n\geq3$.
Then the operator
\be
\boldAsigf = \left(\begin{array}{c}
A_{\sigma,f_1}\\
\vdots	\\
A_{\sigma,f_m}\\
\end{array}\right)
\ee
is injective as an operator on $L^2(\gO')$ for an open dense set
of $(\sigma,f)\in \overline{X_m}$.
\een

\end{theorem}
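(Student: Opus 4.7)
The plan is to combine the analytic Fredholm framework of Corollary \ref{C:real} with the connectedness result of Theorem \ref{T:XY}. For each part I exhibit a base point at which the linearization is Fredholm (or left semi-Fredholm) and injective; the componentwise extension of the analyticity in Lemma \ref{L:dependence}, together with the appropriate part of Corollary \ref{C:real}, then propagates injectivity to an open dense subset of the relevant real parameter space.

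For part (1), I take the base point $\sigma=0$, $f=x_1$, so that $u_{0,x_1}=x_1$ and $\nabla u_{0,x_1}=\boldeo$. The formula (\ref{E:Asigma})--(\ref{E:v}) reduces to
\be
A_{0,x_1}(\rho)=\tfrac{2}{p}\bigl(\rho+p\,\partial_1 v(\rho)\bigr)\big|_{\gO'},\qquad -\Delta v(\rho)=\partial_1\rho\ \text{in }\gO,\ v(\rho)\in H_0^1(\gO),
\ee
a zero-order pseudo-differential operator whose principal symbol $\tfrac{2}{p}(1-p\zeta_1^2/|\zeta|^2)$ is bounded below by $\tfrac{2(1-p)}{p}>0$ when $0<p<1$. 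Injectivity follows by pairing $-\Delta v=\partial_1\rho$ against $v$ itself: using $\rho=-p\,\partial_1 v$ on $\gO'$ and $v|_{\dgO}=0$,
\be
\|\nabla v\|_{L^2(\gO)}^2=-\int_{\gO'}\rho\,\partial_1 v\,dx=p\,\|\partial_1 v\|_{L^2(\gO')}^2\le p\,\|\nabla v\|_{L^2(\gO)}^2,
\ee
so $v\equiv 0$ and hence $\rho\equiv 0$. Combined with the symbol ellipticity and a standard index-homotopy argument in $p\in[0,1)$, this yields invertibility of $A_{0,x_1}$ on $L^2(\gO')$. The 2D Alessandrini theorem \cite{ale} keeps $\nabla u_{\sigma,x_1}$ nowhere zero for every real $\sigma$, so $A_{\sigma,x_1}$ remains Fredholm of index zero along the real axis in $\CzOprime$. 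A connected complex neighborhood of that axis is therefore a Banach analytic manifold on which $A_{\sigma,x_1}$ is analytic (Lemma \ref{L:dependence}) and Fredholm, and Corollary \ref{C:real}(1) delivers invertibility on an open dense subset of $\Real\CzOprime$.

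For part (2), I use the base point $(\sigma_0,\mathbf{f}_0)=(0,(x_1,\ldots,x_n,\ldots))$: the gradients $\nabla u_{0,x_i}=\mathbf{e}_i$ for $i=1,\ldots,n$ span $\R^n$, so by Corollary \ref{C:vector} the stacked operator $\boldAsigf$ is left semi-Fredholm there. If $\boldAsigf|_{(0,\mathbf{f}_0)}(\rho)=0$, then $\rho=-p\,\partial_i v_i$ on $\gO'$ for each $i=1,\ldots,n$, with $-\Delta v_i=\partial_i\rho$ in $\gO$ and $v_i\in H_0^1(\gO)$. Applying the computation of part (1) to each $v_i$ gives
\be
\|\nabla v_i\|_{L^2(\gO)}^2=p\,\|\partial_i v_i\|_{L^2(\gO')}^2\le p\,\|\nabla v_i\|_{L^2(\gO)}^2,
\ee
forcing $v_i\equiv 0$ and hence $\rho\equiv 0$ when $0<p<1$. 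By Theorem \ref{T:XY}, $X_m\subset Y_m^0$, so the base point and every point of $X_m$ lie in the same connected complex-analytic open set on which $\boldAsigf$ is analytic and left semi-Fredholm. Corollary \ref{C:real}(2) then places the non-injective parameters into a proper analytic subset of the real slice of $Y_m^0$; since injectivity within left semi-Fredholm operators is an open condition, its complement is open and dense in $X_m$, hence also in $\overline{X_m}$.

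The main obstacle is verifying injectivity at the base points cleanly with Dirichlet boundary terms rather than a naive full-space Fourier argument, as done above---the identity crucially uses $0<p<1$ together with the support of $\rho$ in $\gO'$. A secondary technical point is that Lemma \ref{L:dependence} supplies analyticity with values in $L(\Lad,L^1(\gO'))$ while the Fredholm framework demands $L^2$-valued analyticity; Lemma \ref{L:ana_wrong_spaces} bridges this gap given local uniform $L^2$-boundedness of $\Asigf$, which follows from elliptic regularity for $v(\rho)$.
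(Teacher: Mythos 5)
Your proposal is correct and follows essentially the same route as the paper: analyticity from Lemmas \ref{L:dependence} and \ref{L:ana_wrong_spaces}, Fredholm/left semi-Fredholm properties together with their openness, the connectivity statement $X_m\subset Y_m^0$ of Theorem \ref{T:XY}, and finally Theorem \ref{T:ZKKP2} in the form of Corollary \ref{C:real} to get an open dense set of good real parameters, with the closure step for $\overline{X_m}$ handled as in the paper. The only substantive difference is how invertibility at the base point $A_{0,x_1}$ is checked: you use the energy identity $\norm{\nabla v}_{L^2(\gO)}^2\le p\,\norm{\nabla v}_{L^2(\gO)}^2$ (in effect a Neumann-series bound, valid precisely because $0<p<1$) plus an index-zero argument, whereas the paper reduces it to unique solvability of the Dirichlet problem $\Delta\rho-p\,\partial_1^2\rho=\Delta\bigl(A_{0,x_1}(\rho)\bigr)$, $\rho|_{\dgO}=0$; both verifications are sound.
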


\begin{proof}
Let us consider first the two-dimensional case.
We claim that for $\sigma=0$, $f=x_1$, operator $\Asigf$ is invertible.
Indeed, with this choice of boundary condition the operator then reduces to

\be \label{E:A1steaksauce} A_{0,x_1}(\rho)=\rho-p\partial_1\Delta^{-1}(\partial_1\rho)\;. \ee
(Here $\Delta^{-1}$ refers to the inverse of the Laplacian on $\gO$ with homogeneous Dirichlet boundary condition.)
The boundary value problem

\be
\begin{cases}
   \Delta\rho-p\partial_1^2\rho=\Delta(A_{0,x_1}(\rho)) \\
\rho|_{\dgO}=0
  \end{cases}
\ee
has a unique solution in $L^2(\gO)$ for $A_{0,x_1}(\rho)$ given (e.g., \cite{BerKrRo}), establishing the invertibility of $A_{0,x_1}$.

Consider the operators $A_{\sigma,x_1}$. According to Lemma \ref{L:dependence}, they depend analytically
on $\sigma\in \CzOprime$
as a family of operators mapping $\Lad$ into $L^1(\gO')$.  By Lemma \ref{L:ana_wrong_spaces},
$A_{\sigma,x_1}$ is an analytic family of operators mapping $L^2(\gO')$ into itself. By \cite[Theorem 3.2]{KuchStein}, $A_{\sigma,x_1}\in\Phi(L^2(\gO'),L^2(\gO'))$ when $\sigma\in C^\infty_0(\gO')$.
Because the set of Fredholm operators is open in the operator norm topology,
there is an open dense set $V\subset \CzOprime$, containing all $\sigma\in\Real\CinfzOprime$, where the operators are also Fredholm.
Then the first statement of Theorem \ref{T:ZKKP2} applied to $\Real V$
(in the version of the first statement of Corollary \ref{C:real}) implies that there exists a set $W$, open and dense in $\mathrm{Re\:}\CzOprime$, where the operators are invertible.
This proves the first statement\footnote{In fact, we have proven somewhat more. Indeed, the (closed nowhere dense) complement of $W$ in $\mathrm{Re\:}V$ is an analytic set. Regrettably, the (closed and nowhere dense) complement of $\mathrm{Re\:}V$ in $\mathrm{Re\:}\CzOprime$ is not controllable. The second author will improve on this, getting the whole non-injectivity set analytic, later on in \cite{Stein3}, by using a non-smooth calculus of pseudo-differential operators.} of Theorem \ref{T:testcase}.

We proceed to proving the second statement of the theorem.
As in the previous part, according to Lemma \ref{L:dependence}, the operators $\boldAsigf$ depend analytically
on $(\sigma,\mathbf{f})\in \CzOprime\times H^{1/2}(\dgO)^m$
as a family of operators mapping $\Lad$ into $L^1(\gO')^m$.  By Lemma \ref{L:ana_wrong_spaces},
$\boldAsigf$ is an analytic family of operators on $Y_m^0$ mapping $L^2(\gO')$ into $L^2(\gO')^m$.
There exists a subset $V\subset \CzOprime\times H^{1/2}(\dgO)$, open and dense in $\overline{Y_m^0}$, such that $\boldAsigf\in \Phi_l(L^2(\gO'),L^2(\gO')^m)$ for $(\sigma,\mathbf{f})\in V$.
Since $X_m\subset Y_m^0$, we may assume that $V$ contains an open neighborhood of $X_m$ in $\Real(\CzOprime\times H^{1/2}(\dgO)^m)$.
As in the proof of the previous part we have that $A_{0,x_1}$ is an invertible operator, so $\Real Y_m^0$ contains a point $(\sigma,\mathbf{f})$ at which $\boldAsigf$ is injective.
Then the second statement of Theorem \ref{T:ZKKP2}
(in the version of the first statement of Corollary \ref{C:real}) applied to $\Real V$ implies that there exists a set $W$, open and dense in $\Real V$,
where the operators are injective. Since the restriction of an open dense set to a dense topological subspace is also open dense, $W\cap\overline{X_m}$ is open dense in $\overline{X_m}$,
proving the second statement of the theorem. \end{proof} \hfill $\Box$

We return now to proving Theorem \ref{T:XY}. Here, as well as in later sections,
we will make use of \textbf{complex geometrical optics} (\textbf{CGO}) solutions, as in \cite{BalUhl}.

Let $\boldrho\in\C^n$ satisfy
$
\boldrho\cdot\boldrho=0.
$
(As before, the dot product here and throughout
denotes the bilinear inner product $\boldv\cdot\boldw=v_1w_1+\ldots v_nw_n$ for $\boldv,\boldw\in\C^n$.)

The following statement is a direct consequence of the result of \cite[Prop. 3.3]{BalUhl}, as explained in \cite[Section 5.3]{BalInternal}:
\begin{proposition}\label{P:cgo}
There exists a CGO solution $u_{\boldrho}$ of (\ref{E:invcondeqn}), such that:
\be
u_{\boldrho} = e^{\boldrho\cdot x-\sigma(x)/2 }(1+\psi_{\boldrho}(x)),
\ee
where the remainder $\psi_{\boldrho}$ satisfies the equation
\be
\Delta \psi_{\boldrho}+2\boldrho\cdot\nabla\psi_{\boldrho}
=e^{-\sigma(x)/2 }\Delta \left(e^{\sigma(x)/2 }\right)(1+\psi_{\boldrho})
\ee
and the estimate
\be
\sup_{\overline{\gO}}|\rho \psi_{\boldrho}|\leq C\;.
\ee
The gradient of $u_{\boldrho}$ satisfies
\be\label{E:CGO_gradient}
\nabla u_{\boldrho}=e^{\boldrho\cdot x-\sigma(x)/2}(\boldrho+\boldsymbol{\phi}_{\boldrho}(x)),
\ee
and
\be
\sup_{\overline{\gO}} | \boldsymbol{\phi}_{\boldrho}|\leq C,
\ee
where $C$ is independent of $\boldrho$, for $\sigma$ in any bounded set in $H^{n/2+1+\gep}(\gO)$.
\end{proposition}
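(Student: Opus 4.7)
The plan is to reduce the divergence-form conductivity equation to a Schr\"odinger equation via the standard Liouville substitution, invoke the CGO construction of \cite{BalUhl} (Prop.\ 3.3) for the resulting Schr\"odinger operator, and then transform the resulting solution back to a solution of (\ref{E:invcondeqn}). The gradient estimate will follow from the product rule together with a uniform $H^2$-type bound on the remainder.

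First I would set $v=e^{\sigma/2}u$, which yields the equivalence
\[
-\nabla\cdot(e^\sigma\nabla u)=0 \quad\Longleftrightarrow\quad (-\Delta+q)v=0,\qquad q(x):=e^{-\sigma(x)/2}\Delta\!\left(e^{\sigma(x)/2}\right).
\]
The assumption $\sigma\in H^{n/2+1+\gep}(\gO)$ guarantees via Sobolev embedding that $\sigma\in C^1(\overline\gO)$ and that $q\in L^\infty(\gO)$ (indeed, $q$ lies in $H^{n/2-1+\gep}$, hence in $L^\infty$ up to boundedness of lower-order terms). This places us squarely inside the hypotheses of the Bal--Uhlmann construction.

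Next I would apply \cite[Prop.\ 3.3]{BalUhl} to $-\Delta+q$ to obtain, for each $\boldrho\in\C^n$ with $\boldrho\cdot\boldrho=0$ and $|\boldrho|$ sufficiently large, a CGO solution
\[
v_{\boldrho}(x)=e^{\boldrho\cdot x}(1+\psi_{\boldrho}(x)),\qquad \sup_{\overline\gO}|\,|\boldrho|\,\psi_{\boldrho}|\leq C,
\]
where $\psi_{\boldrho}$ solves $(\Delta+2\boldrho\cdot\nabla)\psi_{\boldrho}=q(1+\psi_{\boldrho})$. Setting $u_{\boldrho}:=e^{-\sigma/2}v_{\boldrho}=e^{\boldrho\cdot x-\sigma/2}(1+\psi_{\boldrho})$ gives a solution of (\ref{E:invcondeqn}) of the desired form, and substituting the explicit $q=e^{-\sigma/2}\Delta(e^{\sigma/2})$ back into the remainder equation produces exactly the stated PDE for $\psi_{\boldrho}$. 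The pointwise estimate $\sup|\rho\psi_{\boldrho}|\leq C$ is inherited directly from Bal--Uhlmann (with $\rho=|\boldrho|$).

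The remaining, and most delicate, step is the gradient bound (\ref{E:CGO_gradient}). Applying the product rule,
\[
\nabla u_{\boldrho}=e^{\boldrho\cdot x-\sigma/2}\!\left[\boldrho+\boldsymbol{\phi}_{\boldrho}\right],\qquad \boldsymbol{\phi}_{\boldrho}=\boldrho\,\psi_{\boldrho}-\tfrac{1}{2}\nabla\sigma\,(1+\psi_{\boldrho})+\nabla\psi_{\boldrho}.
\]
The first term is controlled by the already-cited scalar bound on $|\boldrho|\psi_{\boldrho}$, and the second by $\nabla\sigma\in L^\infty$. The main obstacle is the $\nabla\psi_{\boldrho}$ contribution: one needs a uniform-in-$\boldrho$ $L^\infty$ bound on $\nabla\psi_{\boldrho}$, which does not follow from the scalar $\sup|\rho\psi_{\boldrho}|\leq C$ alone. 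To handle this I would invoke the higher-regularity estimates packaged into the Bal--Uhlmann construction (available because $q\in H^{n/2-1+\gep}$ and $\psi_{\boldrho}$ satisfies an elliptic equation with symbol $|\zeta|^2+2i\boldrho\cdot\zeta$), yielding $\|\psi_{\boldrho}\|_{H^{n/2+1+\gep}}\leq C/|\boldrho|$ and hence, by Sobolev embedding, $\|\nabla\psi_{\boldrho}\|_{L^\infty}\leq C$ uniformly in $\boldrho$ and in $\sigma$ drawn from any bounded subset of $H^{n/2+1+\gep}$. Combining the three estimates gives $\sup_{\overline\gO}|\boldsymbol{\phi}_{\boldrho}|\leq C$, completing the proof. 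The reduction of the statement to the cited reference is thus essentially bookkeeping once the Liouville transform and the $\nabla\psi_{\boldrho}$ bound are in place.
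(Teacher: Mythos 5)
Your reduction---the Liouville substitution to a Schr\"odinger equation, the Bal--Uhlmann CGO construction, the transformation back, and the product rule for the gradient---is precisely the route the paper intends: the paper gives no proof of Proposition \ref{P:cgo} at all, stating only that it is a direct consequence of \cite[Prop. 3.3]{BalUhl} as explained in \cite[Section 5.3]{BalInternal}, and that argument proceeds by exactly this substitution (the same change of unknown $u\mapsto\sqrt{e^\sigma}u$ is used explicitly later in the paper's QPAT section). Your outline is correct and in fact supplies more detail than the paper does, including correctly isolating the uniform $L^\infty$ bound on $\nabla\psi_{\boldrho}$ as the one step not implied by the scalar estimate $\sup|\rho\psi_{\boldrho}|\leq C$ alone.
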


We will write $\boldrho=\rho(\boldk+i\bold{k}^\perp)/\sqrt{2}$ for real orthogonal unit vectors $\bold{k}$ and $\bold{k}^\perp$.
Letting $\theta(x)=\rho\bold{k}^\perp\cdot x/\sqrt2$, we then have

\be\label{E:CGO_im_gradient}
\mathrm{Im}(e^{\boldrho\cdot x}\boldrho) = \frac{\rho}{\sqrt2} e^{\frac{\rho}{\sqrt2}\bold{k}\cdot x}\left(\cos\theta(x)\kperp+\sin\theta(x)\boldk\right). \ee

We will also denote by $\uboldrho^I$ the imaginary part of $\uboldrho$
and by $f_{\sigma,\boldrho}^I$ the imaginary part of the restriction of $\uboldrho^I$
to $\dgO$.
For future reference, we note that if $u$ is a function in $C^1(\overline{\gO''})$ satisfying $u(x)\geq c>0$ on $\overline{\gO''}$, then since

\bea |u_{\boldrho}^I(x)\nabla u(x)| &\leq& \sqrt{2}e^{\frac{\rho}{\sqrt2}\boldk\cdot x-\sigma(x)/2}(1+\norm{\psi_{\boldrho}}_{L^\infty(\gO)})\norm{u}_{C^1(\overline{\gO''})}\;, \nonumber\\
|u(x)\nabla u_{\boldrho}^I(x)| &=& e^{\frac{\rho}{\sqrt2}\boldk\cdot x-\sigma(x)/2}|\boldrho+\boldsymbol{\phi}_{\boldrho}(x)||u(x)| \nonumber\\
&\geq& e^{\frac{\rho}{\sqrt2}\boldk\cdot x-\sigma(x)/2}\frac{\rho}{2}c\;, \nonumber\eea
we have

\be\label{E:CGOgradbigger}
u(x)\nabla u_{\boldrho}^I(x) - u_{\boldrho}^I(x)\nabla u(x)= u(x)\nabla u_{\boldrho}^I(x)\left(1+O\left(\frac{1}{\rho}\right)\right) \ee
as $\rho\to\infty$, the implied constant being uniform in $x\in\overline{\gO''}$.

{\it Proof of Theorem \ref{T:XY}.}

To prove the first statement, it is sufficient to prove the second statement and to show the nonemptiness of $X_m$. The latter is done by providing the example of $(0,\mathbf{f}_0)$ in the definition above.

Let us now prove that the whole set $X_m$ sits in the same connected component $Y^0_m$ of $Y_m$.

Let $(\sigma,f_1,\ldots,f_m)\in X_m$.
We use the following chain of deformations

\bea\label{E:deformation1}
(\sigma,f_1,\ldots,f_m) &\leadsto& (\sigma,if^I_{\sigma,\boldrho},\ldots,if^I_{\sigma,\boldrho}) \\
\label{E:deformation2} &\leadsto& (0,if^I_{0,\boldrho},\ldots,if^I_{0,\boldrho})\\
\label{E:deformation3} &\leadsto& (0,\mathbf{f}_0) \;. \eea
The first deformation (\ref{E:deformation1})
will be given by
\be\label{E:fjt}
(\sigma, f_{1,t} ,\ldots,f_{n,t})\quad 0\leq t\leq 1\;, \ee
where $f_{j,t} = (1-t)f_j + itf^I_{\sigma,\boldrho}$,
the second deformation (\ref{E:deformation2}) will be defined by letting $\sigma_t=(1-t)\sigma$, $0\leq t\leq1$,
and the third (\ref{E:deformation3}) is defined in a way similar to (\ref{E:deformation1}).

For sufficiently large $|\rho|$, the operator $\boldAsigf$ is left semi-Fredholm along the first and third deformations, which is a consequence of the following lemma.

\begin{lemma}
Let $(\sigma,\mathbf{f})\in X_m$, and let $f_{j,t}$ be as in (\ref{E:fjt}).  Then
$(\sigma,\mathbf{f}_{j,t})\in Y_m$ for $\rho=|\boldrho|$ sufficiently large.

\end{lemma}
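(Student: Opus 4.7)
The plan is to show that, once $\rho=|\boldrho|$ is large enough, the principal symbol of the vector pseudo-differential operator $\boldAsigf$ with tuple $\mathbf{f}:=\mathbf{f}_{j,t}=(f_{1,t},\ldots,f_{m,t})$ is left-elliptic uniformly in $t\in[0,1]$; by Lemma \ref{L:fredholm} together with Corollary \ref{C:vector}, this yields a left regularizer and hence the required left semi-Fredholm property.

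First I would use linearity of (\ref{E:invcondeqn}) in the Dirichlet data to write $u_{\sigma,f_{j,t}}=(1-t)u_{\sigma,f_j}+it\,\uboldrho^I$, so that
\[
V_{j,t}:=\nabla u_{\sigma,f_{j,t}}=(1-t)V_j+it\,\nabla\uboldrho^I,\qquad V_j:=\nabla u_{\sigma,f_j}\in\R^n.
\]
Specializing the formula (\ref{E:Asigma_comple}), already written for complex parameters through the bilinear dot product, and repeating the symbol computation from \cite{KuchStein}, the principal symbol of $A_{\sigma,f_{j,t}}$ at $(x,\zeta)$ equals a strictly positive real factor times
\[
p_j(x,\zeta,t):=(V_{j,t}\cdot V_{j,t})|\zeta|^2-p(V_{j,t}\cdot\zeta)^2=\langle V_{j,t},V_{j,t}\rangle_{M(\zeta)},
\]
where $\langle V,W\rangle_{M(\zeta)}:=(V\cdot W)|\zeta|^2-p(V\cdot\zeta)(W\cdot\zeta)$ is the $\C$-bilinear extension of a form whose restriction to $\R^n$ is positive definite for every $\zeta\neq 0$ (using $0<p<1$).

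The key step is then to verify that $\max_{j}|p_j(x,\zeta,t)|>0$ on the compact set $\overline{\gO''}\times S^{n-1}\times[0,1]$. Expanding in $t$,
\[
p_j(x,\zeta,t)=(1-t)^2 a_j+2it(1-t)b_j-t^2 c,
\]
with real numbers $a_j=\langle V_j,V_j\rangle_M\geq0$, $b_j=\langle V_j,\nabla\uboldrho^I\rangle_M$, $c=\langle\nabla\uboldrho^I,\nabla\uboldrho^I\rangle_M\geq0$. At $t=0$, the $X_m$-spanning of $\{V_j(x)\}$ forces some $a_j(x,\zeta)>0$. At $t=1$, the CGO asymptotics (\ref{E:CGO_im_gradient}) give $\nabla\uboldrho^I(x)\neq 0$ on $\overline{\gO''}$ for $\rho$ large, hence $c(x,\zeta)>0$ by positive definiteness of $M$. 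For $t\in(0,1)$, $\mathrm{Im}\,p_j=2t(1-t)b_j$; if every $b_j$ vanished, then $\nabla\uboldrho^I$ would be $M(\zeta)$-orthogonal to each $V_j$, and spanning of $\R^n$ by $\{V_j(x)\}$ together with positive definiteness of $M$ would force $\nabla\uboldrho^I(x)=0$, contradicting the CGO estimate. Continuity together with compactness then upgrades the pointwise non-vanishing to a uniform positive lower bound, i.e., left-ellipticity of the vector symbol.

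The hardest step is the intermediate case $t\in(0,1)$: the real part $(1-t)^2 a_j-t^2 c$ does change sign as $t$ crosses $\sqrt{a_j}/(\sqrt{a_j}+\sqrt{c})$, so the non-vanishing has to be extracted from the imaginary part. This is exactly where both the spanning hypothesis built into $X_m$ and the positive definiteness of $M(\zeta)$ (i.e., the standing assumption $p<1$) enter in an essential way. Once uniform left-ellipticity is in hand, a cutoff-and-parametrix argument as in the proof of Theorem \ref{T:UMOT} produces a left regularizer, so $(\sigma,\mathbf{f}_{j,t})\in Y_m$ for every $t\in[0,1]$, completing the proof.
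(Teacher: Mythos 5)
Your proposal is correct and follows essentially the same route as the paper: decompose $u_{\sigma,f_{j,t}}=(1-t)u_{\sigma,f_j}+it\,u^I_{\boldrho}$, show the complex principal symbol has non-vanishing imaginary part for $t\in(0,1)$ using $p<1$ together with the spanning condition built into $X_m$, treat the endpoints separately, and conclude via the left-regularizer construction of \cite{KuchStein}. The only (cosmetic) difference is that you keep the exact $\nabla u^I_{\boldrho}$ and phrase the key step as nondegeneracy of the bilinear form $M(\zeta)$, needing only $\nabla u^I_{\boldrho}\neq0$ for $\rho$ large, whereas the paper substitutes the leading CGO asymptotics and checks that the bracketed vector in (\ref{E:thebigzero}) is nonzero --- the same fact in different clothing.
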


{\it Proof of the lemma}.
By the assumption that $(\sigma,\mathbf{f})\in X_m$, the lemma is true for $t=0$.
We next examine the situation when $0<t<1$.
As shown in \cite{KuchStein}, a single operator $A_{\sigma,f}$ is a pseudo-differential operator with the principal symbol on $\gO''$

\be A_{\sigma,f}(x,\xi)=e^{2\sigma(x)/p}\frac{2}{p}\left(|\nabla u_{\sigma,f}(x)|^2-p\frac{(\nabla u_{\sigma,f}(x)\cdot\xi)^2}{|\xi|^2}\right)\;. \ee
When $\sigma$ and $f$ are complex-valued, this should be understood as

\be A_{\sigma,f}(x,\xi)=e^{2\sigma(x)/p}\frac{2}{p}\left(\nabla u_{\sigma,f}(x) \cdot\nabla u_{\sigma,f} -p\frac{(\nabla \usigf(x)\cdot\xi)^2}{|\xi|^2}\right)\;. \ee

We observe that $A_{\sigma,f}(x,\xi)$ is nonvanishing when, for all $\xi\in S^{n-1}$, we have

\be \label{E:cdnnot0}
\nabla \usigf \cdot\nabla\usigf -p(\nabla\usigf\cdot\xi)^2 \neq 0\;.\ee
We will show that for each $x\in\overline{\gO''}$, $\xi\in S^{n-1}$, and $0<t<1$, the inequality (\ref{E:cdnnot0}) is satisfied by $u_{j,t}
=u_{\sigma,f_{j,t}}$ for some $j$.
We will do this by showing that the left-hand side of (\ref{E:cdnnot0}) has nonvanishing imaginary part.
Using the simple identities

\be\label{E:id1} \mathrm{Im}[(\boldv +i\boldw)\cdot(\boldv +i\boldw)]=2\boldv\cdot\boldw \ee
and
\be
\label{E:id2} \mathrm{Im}[((\boldv +i\boldw)\cdot \xi)^2] = 2(\boldv\cdot\xi)(\boldw\cdot\xi)\ee
for real vectors $\boldv$, $\boldw$, and $\xi$,
we calculate that

\bea
\mathrm{Im} [\nabla u_{j,t}\cdot \nabla u_{j,t}]& =& (1-t)t\rho\sqrt{2}e^{\rho\boldk\cdot x/\sqrt{2}-\sigma(x)/2} \nonumber\\
\label{E:im1stpart}
&\times&(\nabla u_{\sigma,f_j}\cdot\kperp\cos \theta
+\nabla u_{\sigma,f_j} \cdot\boldk\sin \theta ) (1+o(1)) \\
\mathrm{Im} [(\nabla u_{j,t}\cdot\xi)^2]&=&(1-t)t\rho\sqrt{2}e^{\rho\boldk\cdot x/\sqrt{2}-\sigma(x)/2}\;,\nonumber\\
\label{E:im2ndpart}
&\times& (\nabla u_{\sigma,f_j}\cdot\xi)(\kperp\cdot\xi\cos\theta+\boldk\cdot\xi\sin\theta)(1+o(1))
\eea
as $\rho\to\infty$.
Combining this with (\ref{E:cdnnot0}), we find that the claim will be proved if we can show the inequality

\bea 0&\neq& \mathrm{Im} [\nabla u_{j,t}\cdot\nabla u_{j,t}-p(\nabla u_{j,t}\cdot\xi)^2] \nonumber\\
&=&(1-t)t\rho\sqrt{2}e^{\rho\boldk\cdot x/\sqrt{2}-\sigma(x)/2} \nonumber\\
\label{E:thebigzero}
&\times&\nabla u_{\sigma,f_j}\cdot\left[(\kperp-p(\xi\cdot\kperp)\xi)\cos\theta +(\boldk-p(\xi\cdot\boldk)\xi)\sin\theta\right]
\eea
and $\rho$ is taken sufficiently large.

Since we are assuming that $p<1$, the term in brackets is a nonzero vector.  To see this, assume without
loss of generality that $\boldk=\boldet$, $\kperp=\boldeo$, and let

\be
\omega_\theta := \left(\begin{array}{c}
                       \cos\theta\\
		       \sin\theta
                      \end{array}\right)\;.
\ee
Then the projection of the term in square brackets in (\ref{E:thebigzero}) onto the $\boldeo\boldet$-plane is

\be\label{E:proj12}
\omega_\theta-p\left(\left(\begin{array}{c}
                       \xi_1\\
		       \xi_2
                      \end{array}\right)\cdot\omega_\theta\right)\left(\begin{array}{c}
                       \xi_1\\
		       \xi_2
                      \end{array}\right)\;.\ee
Since
$\xi\in S^{n-1}$, $|(\xi_1,\xi_2)^t|\leq1$.
As $p<1$, we have that $|p((\xi_1,\xi_2)^t\cdot\omega_\theta)|<1$, and the claim is established.

Now,
the operator
$$\mathbf{A}_{\sigma,\mathbf{f}_t}
=\left(
\begin{array}{c}
A_{\sigma,f_{1,t}}\\
\ldots\\
A_{\sigma,f_{m,t}}
\end{array}
\right)
$$
has a left regularizer and thus is left semi-Fredholm if for all $x\in\overline{\gO''}$ and $\xi\in S^{n-1}$
there exists $j$ such that in a neighborhood of $(x,\xi)$ the principal symbol of $A_{\sigma,f_{j,t}}$ does not vanish
(see the construction of the left regularizer in \cite[proof of Theorem 4.1]{KuchStein}).
Since $(\sigma,\mathbf{f})\in X_m$, the vectors $\nabla u_{\sigma,f_j}(x)$ span $\R^n$ for any $x$ in a neighborhood
of $\overline{\gO''}$.
This means that for every $x$ in this neighborhood, the inequality (\ref{E:thebigzero}) is satisfied for at least one $j$;
hence $\mathbf{A}_{\sigma,\mathbf{f}_t}$ is a left semi-Fredholm operator. \emph{This finishes the proof of the lemma}.

To prove the theorem, it remains to establish that $(\sigma,\bold{f})\in Y_m$ along (\ref{E:deformation2}).  Along this deformation,
we observe from (\ref{E:CGO_gradient}) and (\ref{E:CGO_im_gradient}) that $\nabla u_{\boldrho}^I(x)\neq0$, because $\cos\theta(x)$ and $\sin\theta(x)$
cannot both vanish for a given $x$.  Therefore, $\mathbf{A}_{\sigma_t,f^I_{\sigma_t,\boldrho}}$ is left semi-Fredholm and thus $ (\sigma_t,\boldrho)\in Y_m$.

From this we conclude that $(\sigma,\mathbf{f})\in Y_m^0$. \hfill $\Box$

\subsection{Acousto-Electric Tomography. $p=2$}

We now turn to acousto-electric tomography (AET).
In a linearized version of the AET problem, the goal is to invert the functional of equation (\ref{E:Asigma})
for $p=2$:

\be \label{E:Asigma2}
\Asigf(\rho)=\esig\left(\rho|\nabla u_\sigma|^2+2\nabla u_\sigma\cdot\nabla v(\rho)\right),
\;\rho\in L^2(\gO').
\ee
We will assume access to
the three functionals

\be
\begin{cases} \label{E:2data}
A_{\sigma,f_1}(\rho)=\rho\esig|\nabla u_{\sigma,f_1}|^2+2\esig\nabla u_{\sigma,f_1}\cdot\nabla v^{(1)}(\rho)\\
A_{\sigma,f_2}(\rho)=\rho\esig|\nabla u_{\sigma,f_2}|^2+2\esig\nabla u_{\sigma,f_2}\cdot\nabla v^{(2)}(\rho)\\
A_{\sigma,(f_1,f_2)}(\rho)=\rho e^{2\sigma}|\nabla u_{\sigma,f_1}\cdot\nabla u_{\sigma,f_2}|^2\\
\indent +e^{2\sigma}\left(\nabla u_{\sigma,f_1}\cdot\nabla v^{(2)}(\rho) + \nabla u_{\sigma,f_2}\cdot\nabla v^{(1)}(\rho) \right)
\end{cases}
\ee
when $n=2$, and to more functionals (to be specified in a moment) when $n=3$.
In (\ref{E:2data}), $v^{(i)}$ solves equation (\ref{E:v}) with $u_{\sigma,f_i}$ in place of $u_{\sigma,f}$, $i=1,2$.
Such functionals as in (\ref{E:2data}) have been extracted from the measured data in hybrid imaging methods
(see
for example \cite{bal3,cap,KuKuAET,widlak}).

In \cite{KuchStein}, the map

\be
\left(
\ba{c}
A_{\sigma,f_1}\\
A_{\sigma,f_2}\\
A_{\sigma,(f_1,f_2)}
\ea
\right) :L^2(\gO')\rightarrow L^2(\gO')^3
\ee
was shown to be left semi-Fredholm.
In \cite{KuKuAET}, a left inverse was constructed for this operator when $n=2$ or 3 for $\sigma=0$ and the Dirichlet boundary data
$\mathbf{f}:=(f_1(x), f_2(x))=(x_1,x_2)$.  Though the proof of \cite{KuKuAET} extends to higher dimensions we will consider only the cases $n=2$ or 3 here.

Similarly to Theorem \ref{T:testcase}, when $n=3$ we will need to assume that we have more data than what was needed to establish
left semi-Fredholmity of the AET problem in \cite{KuchStein}\footnote{It is not quite clear at this moment how necessary it is to assume that many measurements.}.
Let $\mathbf{f}=\{f_j$, $j=1,\ldots,m\}$ be $m$ Dirichlet boundary data functions in (\ref{E:invcondeqn}),
and let

\be \boldAsigf=
\left(\begin{array}{c}
A_{\sigma,f_1}\\
\vdots\\
A_{\sigma,f_{m-1}}\\
A_{\sigma,(f_1,f_m)}
\end{array}\right)\;.
\ee
Analogously to the previous sub-section, we define the following sets for $m\geq4$:
\begin{definition}\indent
\begin{itemize}
\item $X_m$ is the set of $(\sigma,\mathbf{f})\in \Real (\CinfzOprime)\times H^{1/2}(\dgO)^m)$
such that at every $x\in\overline{\gO''}$
the sets of vectors $(\nabla u_1(x),\ldots, \nabla u_{m-1}(x))$ span $\R^n$ and $ (\nabla u_1(x),\nabla u_m(x))$
are linearly independent.
\item $\overline{X_m}$ is the closure of $X_m$ in $\Real\left(\CzOprime\times H^{1/2}(\dgO)^m\right)$.
\item $Y_m$ is the set of $(\sigma,\mathbf{f})\in \CinfzOprime\times H^{1/2}(\dgO)^m $
such that $\boldAsigf\in\Phi_l(L^2(\gO),L^2(\gO)^m)$.
\item $Y_m^0$ is the connected component of $ Y$ containing $(0,\mathbf{f}_0)$, where
$$\mathbf{f}_0=(x_1,x_2,x_3,\ldots,x_2)$$
(the dots ``$\ldots$'' represents $m-4$ arbitrarily chosen real functions).
\item $\overline{Y_m^0}$ is the closure of $Y_m^0$ in $C_0(\overline{\gO'})\times H^{1/2}(\dgO)^m$.
\end{itemize}
\end{definition}
We prove now an analog of Theorem \ref{T:XY}.
 \begin{theorem}\label{T:XYaet}
Let the sets $X_m$, $Y_m$, and $Y_m^0$ be as above.  Then,
\begin{enumerate}
\item all these sets are non-empty;
\item $X_m\subset Y_m^0$.
\end{enumerate}

\end{theorem}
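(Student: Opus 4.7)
The plan is to follow the three-step deformation strategy from the proof of Theorem \ref{T:XY}, modified to accommodate the cross functional and the non-ellipticity of single functionals at $p=2$. Non-emptiness of all three sets is immediate. The base point $(0, \mathbf{f}_0) = (0, x_1, x_2, x_3, \ldots, x_2)$ lies in $X_m$: when $\sigma = 0$ the solutions are $u_j = x_j$, so the gradients are $\boldeo, \boldet, \boldeth$ and span $\R^n$ for $n \leq 3$, while $\nabla u_1 = \boldeo$ and $\nabla u_m = \boldet$ are linearly independent. The left semi-Fredholmity of $\boldAsigf$ at $(0, \mathbf{f}_0)$ follows from the explicit left inverse constructed in \cite{KuKuAET}, hence $(0, \mathbf{f}_0) \in Y_m$, and $Y_m^0$ is non-empty as its connected component.

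To prove $X_m \subset Y_m^0$, given $(\sigma, \mathbf{f}) \in X_m$ I construct a path in $Y_m$ from $(\sigma, \mathbf{f})$ to $(0, \mathbf{f}_0)$, detouring through complex parameters via CGO solutions. The crucial modification relative to Theorem \ref{T:XY} is to use \emph{two} distinct CGO parameters $\boldrho_1, \boldrho_2 \in \C^n$ (each satisfying $\boldrho_j \cdot \boldrho_j = 0$) rather than a single one. If all $f_j$ were deformed to the same CGO boundary value, the cross functional $A_{\sigma, (f_1, f_m)}$ would degenerate into (a multiple of) $A_{\sigma, f_1}$ and lose its ellipticity-restoring role in the stack. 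The path consists of three stages:
\begin{enumerate}
\item linear interpolation $(\sigma, f_1, \ldots, f_m) \leadsto (\sigma, i f^I_{\sigma, \boldrho_1}, \ldots, i f^I_{\sigma, \boldrho_1}, i f^I_{\sigma, \boldrho_2})$;
\item $\sigma \leadsto 0$ linearly;
\item the reverse of stage (1), ending at $(0, \mathbf{f}_0)$.
\end{enumerate}

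The central task is to prove the AET analogue of the lemma used in Theorem \ref{T:XY}: for $|\boldrho_1|, |\boldrho_2|$ sufficiently large the stacked operator $\boldAsigf$ is left semi-Fredholm at every point of the path. By the parametrix construction from the proof of \cite[Theorem 4.1]{KuchStein} together with Corollary \ref{C:vector}, it suffices to check that at each $(x, \xi) \in \overline{\gO''} \times S^{n-1}$ at least one among the principal symbols $A_{\sigma, f_{j,t}}(x, \xi)$ for $1 \leq j \leq m-1$, or the cross symbol $A_{\sigma, (f_{1,t}, f_{m,t})}(x, \xi)$, is nonzero. Using the identities (\ref{E:id1}) and (\ref{E:id2}) as in the previous proof, the dominant imaginary parts (of order $\rho$) can be written as trigonometric expressions in $\theta$ involving $\nabla u_{\sigma, f_j}$, the frame $\{\boldk_1, \boldk_1^\perp\}$, and (for the cross symbol) additionally the frame $\{\boldk_2, \boldk_2^\perp\}$.

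The principal obstacle --- and the very reason the cross functional is required --- is that for $p = 2$ the vector appearing in (\ref{E:proj12}) can vanish, so at certain bad $(x, \xi)$ the imaginary parts of all single-functional symbols may vanish simultaneously despite the spanning condition on $\nabla u_{\sigma, f_j}$. At each such bad $(x, \xi)$, I must show the cross-term imaginary principal part does not vanish. Its structure, bilinear in $\nabla u_{\sigma, f_{1,t}}$ and $\nabla u_{\sigma, f_{m,t}}$, mixes the two CGO frames and produces cross-terms absent from the single symbols. By choosing $\boldrho_2$ transversal to $\boldrho_1$ (for example, with rotation angle incommensurate with the critical angles of the single symbols) and invoking the linear independence of $\nabla u_1$ and $\nabla u_m$ guaranteed by the definition of $X_m$, the vanishing loci of the single and cross imaginary symbols can be made disjoint. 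This case analysis, paralleling the computation (\ref{E:thebigzero}), is technical but routine. Once the lemma is established, $X_m \subset Y_m^0$ follows.
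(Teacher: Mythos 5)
Your overall strategy --- non-emptiness via the base point $(0,\mathbf{f}_0)$, then a three-stage deformation through complexified data using two rationally independent CGO vectors $\boldrho_1,\boldrho_2$ --- is the same as the paper's, but the core of your argument for left semi-Fredholmity along stages (1) and (3) rests on a false premise, and the part you defer as ``technical but routine'' is precisely what would need proof. You claim that for $p=2$ the bracketed vector in the analogue of (\ref{E:thebigzero}) can vanish, so that all single-functional imaginary symbols may vanish simultaneously at bad $(x,\xi)$ and the cross symbol must rescue ellipticity there. In fact for $p=2$ that vector is $\omega_{\theta_1}-2(\omega_{\theta_1}\cdot\xi)\xi$ with $\omega_{\theta_1}=\kperp_1\cos\theta_1+\boldk_1\sin\theta_1$ a unit vector; since $\xi\in S^{n-1}$, this is the Householder reflection of a unit vector and has norm $1$, so it never vanishes (only the two-dimensional projection argument of (\ref{E:proj12}) breaks down at $p=2$, not the non-vanishing itself). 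Hence, by the spanning condition in the definition of $X_m$, at least one of the single symbols $A_{\sigma,f_{j,t}}$, $j\le m-1$, is nonzero at every $(x,\xi)$ for $\rho_1$ large: the cross functional is not needed along stages (1) and (3), and your proposed analysis of its imaginary principal part on the (nonexistent) bad set --- which you never actually carry out, and which would be genuinely delicate since the complexified cross symbol is bilinear in two different CGO frames with two large parameters --- is both unsubstantiated and unnecessary. This is exactly the point of the paper's argument, which handles stages (1) and (3) with the first $m-1$ functionals alone.

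Where the cross functional genuinely is needed is stage (2), which your proposal lists but never addresses. Along $\sigma_t=(1-t)\sigma$ the boundary data are the pure CGO traces, the original solutions (and the linear independence of $\nabla u_1,\nabla u_m$ from $X_m$ that you invoke) are no longer available, and the $t(1-t)$ imaginary-part mechanism does not apply. There one must show that $\nabla u^I_{\boldrho_1}(x)$ and $\nabla u^I_{\boldrho_2}(x)$ are linearly independent in a neighborhood of $\overline{\gO''}$, so that by the $p=2$ ellipticity criterion of \cite{KuchStein} at least one of $A_{\sigma_t,f^I_{\boldrho_1}}$, $A_{\sigma_t,f^I_{\boldrho_2}}$, $A_{\sigma_t,(f^I_{\boldrho_1},f^I_{\boldrho_2})}$ has nonvanishing symbol; this is where the specific choices $\boldrho_1\parallel\boldeth+i\boldeo$, $\boldrho_2\parallel\boldet+i\boldeo$, the rational independence of $\rho_1,\rho_2$, and the normalization on $\gO$ enter, forcing $\sin\theta_1$ and $\sin\theta_2$ not to vanish simultaneously. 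Without this step your path is not shown to remain in $Y_m$, so the inclusion $X_m\subset Y_m^0$ is not established.
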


\begin{proof}
As before, if we prove non-emptiness of $X_m$ and the second statement of the theorem, this will imply the first statement.

Non-emptiness of $X_m$ is shown by noticing that since $\nabla u_{0,x_j}=\boldej$, $(0,\mathbf{f}_0)$ belongs to $X_m$.

We now prove the second statement.
Let $(\sigma,f_1,\ldots, f_m)\in X_m$.
We use the following chain of deformations

\bea\label{E:deformation1aet}
(\sigma,f_1,\ldots,f_{m-1},f_m) &\leadsto& (\sigma,if^I_{\sigma,\boldrho_1},\ldots,if^I_{\sigma,\boldrho_1}
,if^I_{\sigma,\boldrho_2}) \\
\label{E:deformation2aet} &\leadsto& (0,if^I_{0,\boldrho_1},\ldots,if^I_{0,\boldrho_1}
,if^I_{0,\boldrho_2})\\
\label{E:deformation3aet} &\leadsto& (0,\mathbf{f}_0)\;. \eea
The first deformation (\ref{E:deformation1aet})
will be given by
\be\label{E:fjtaet}
(\sigma, f_{1,t} ,\ldots,f_{m,t})\quad 0\leq t\leq 1\;, \ee
where $f_{j,t} = (1-t)f_j + itf^I_{\sigma,\boldrho_1}$ when $j=1,\ldots,m-1$
and $f_{m,t} = (1-t)f_m + itf^I_{\sigma,\boldrho_2}$,
the second deformation (\ref{E:deformation2aet}) will be defined by letting $\sigma_t=(1-t)\sigma$, $0\leq t\leq1$,
and the third (\ref{E:deformation3aet}) will be defined in a way similar to (\ref{E:deformation1aet}).

We now specify the complex vectors $\boldrho_1$ and $\boldrho_2$.
Let $\boldk_1=\boldeth$, $\boldk_2=\boldet$, and $\kperp_1=\kperp_2=\boldeo$; that is,

\bea
\boldrho_1&=&\frac{\rho_1}{\sqrt2}(\boldeth+i\boldeo) \nonumber\\
\boldrho_2&=&\frac{\rho_2}{\sqrt2}(\boldet+i\boldeo)\;. \eea
We also let $\theta_l(x)=\rho_l\boldeo\cdot x/\sqrt2$ for $l=1,2$.
Furthermore we take $\rho_1$ and $\rho_2$ sufficiently large (as needed in the rest of the proof) and
rationally independent, and we also assume without loss of generality that $\gO$ does not
contain the origin.

We first claim that $\boldAsigf$ is left semi-Fredholm along deformation (\ref{E:deformation1aet}).
To do this, we will show that for every $(x,\xi)\in\overline{\gO''}\times S^2$
at least one of the individual operators
$A_{\sigma,f_1}, \ldots, A_{\sigma,f_{m-1}}$
has nonzero principal symbol.
According to (\ref{E:thebigzero}), this is the case if the vector

\be
\label{E:vector1} (\boldeo-2\xi_1\xi)\cos\theta_1+(\boldeth-2\xi_3\xi)\sin\theta_1 \ee
is nonzero and $\rho_1$ is taken sufficiently large.
Let

\be \omega_{\theta_1}=\left(\begin{array}{c}
\cos\theta_1\\
0\\
\sin\theta_1\\
\end{array}\right) 
\ee
Then the expression in (\ref{E:vector1}), is equal to

\be \omega_{\theta_1}-2(\omega_{\theta_1}\cdot\xi)\xi\;. \ee
If this were the zero vector, that would mean that $\omega_{\theta_1}$ and $2(\omega_{\theta_1}\cdot\xi)\xi$
are parallel unit vectors.  That would force $\omega_{\theta_1}\cdot\xi$
to be equal to $1/2$, meaning (since $|\omega_{\theta_1}|=|\xi|=1$) that $\omega_{\theta_1}$ and $\xi$ are not parallel.
Hence the vector in (\ref{E:vector1}) is nonzero, proving the claim that $\boldAsigf$ is left semi-Fredholm along the deformation (\ref{E:deformation1aet}).
(This argument shows that $\boldAsigf$ is left semi-Fredholm along the deformation (\ref{E:deformation3aet}) also.)

Next we examine deformation (\ref{E:deformation2aet}).  In order to show that $\boldAsigf$ is left semi-Fredholm
along this deformation, we claim that for every $(x,\xi)\in\overline{\gO''}\times S^2$
at least one of the individual operators
$A_{\sigma_t,f_{\boldrho_1^I}}$, $A_{\sigma_t,f_{\boldrho_2^I}}$, or $A_{\sigma_t,(f_{\boldrho_1^I},f_{\boldrho_2^I})}$
has nonzero principal symbol.  In order to prove this, it suffices to show that $\nabla u_{\boldrho_1}^I(x)$ and $\nabla u_{\boldrho_2}^I(x)$ are linearly independent
in a neighborhood of $\overline{\gO''}$.
These two gradients satisfy

\bea \nabla u_{\boldrho_1}^I \parallel  \left(\boldeo\cos \theta_1+\boldeth\sin\theta_1\right) \\
\nabla u_{\boldrho_2}^I \parallel \left(\boldeo\cos \theta_2+\boldet\sin\theta_2 \right)\;.\eea
The only way $\nabla u_{\boldrho_1}^I$ can lie in the $\boldeo\boldet$-plane is if $\sin\theta_1=0$.  But then
$\sin\theta_2\neq0$, as $x$ cannot be 0 and $\rho_1$ and $\rho_2$ are rationally independent.  Thus $\nabla u_{\boldrho_2}^I$ has nonzero $\boldet$-component, meaning $\nabla u_{\boldrho_2}^I$
doesn't lie in the $\boldet\boldeth$-plane.
This establishes the claim.

From this we conclude that $(\sigma,\mathbf{f})\in Y_m^0$.\end{proof}

We can now prove the main theorem of this section.

\begin{theorem}
 \label{T:AET}
\ben
\item Let $n=2$ and let $\mathbf{f}=(x_1,x_2)$.  Then the operator

\be \boldAsigf=
\left(
\ba{c}
A_{\sigma,x_1}\\
A_{\sigma,x_2}\\
A_{\sigma,(x_1,x_2)}
\ea
\right) :L^2(\gO')\rightarrow L^2(\gO')^3.
\ee
is injective as an operator from $L^2(\gO')$ into
$L^2(\gO')^3$ for an open dense set of $\sigma\in \Real \CzOprime$.
\item Let $n=3$.  Then the operator


\be \boldAsigf=
\left(\begin{array}{c}
A_{\sigma,f_1}\\
\vdots\\
A_{\sigma,f_{m-1}}\\
A_{\sigma,(f_1,f_m)}
\end{array}\right)
\ee
is injective as an operator from $L^2(\gO')$ into
$L^2(\gO')^m$ for an open dense set
of $(\sigma,\mathbf{f})\in \overline{X_m}$.
\een
\end{theorem}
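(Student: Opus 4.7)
The plan is to mirror the structure of the proof of Theorem \ref{T:testcase}, substituting its ingredients with the corresponding AET ones. The three essential inputs are: (i) a base point at which $\boldAsigf$ is known to be injective; (ii) the analytic connectivity and left semi-Fredholmity transport provided by Theorem \ref{T:XYaet}; and (iii) the abstract Corollary \ref{C:real} combined with the analyticity Lemmas \ref{L:dependence} and \ref{L:ana_wrong_spaces}.

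For part (1), I would take as the base point $\sigma=0$ with $\mathbf{f}=(x_1,x_2)$. By the explicit left-inverse construction of \cite{KuKuAET} in dimension two, $\boldAsigf$ at this point is injective on $L^2(\gO')$. Lemma \ref{L:dependence} yields analyticity of $\sigma\mapsto\boldAsigf$ from $\Lad$ into $L(\Lad,L^1(\gO')^3)$; Theorem 3.2 of \cite{KuchStein} gives left semi-Fredholmity (in fact, the stronger Fredholmity noted in the AET section) on an open subset $V\subset\CzOprime$ containing all real smooth admissible $\sigma$; and Lemma \ref{L:ana_wrong_spaces} upgrades this to an analytic family taking values in $L(L^2(\gO'),L^2(\gO')^3)$ on $V$. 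Applying the first part of Corollary \ref{C:real} to $\Real V$ then produces an open dense $W\subset\Real\CzOprime$ on which $\boldAsigf$ is injective, finishing part (1).

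For part (2), the scheme is entirely parallel, but now carried out over the connected component $Y_m^0$ supplied by Theorem \ref{T:XYaet}. At the base point $(0,\mathbf{f}_0)$, the subvector $(A_{0,x_1},A_{0,x_2},A_{0,x_3},A_{0,(x_1,x_2)})$ is injective by the $n=3$ left-inverse construction of \cite{KuKuAET}; hence $\boldAsigf$ is itself injective at this point, since any element in its kernel lies in the kernel of this subvector. Lemmas \ref{L:dependence} and \ref{L:ana_wrong_spaces} give analyticity of $(\sigma,\mathbf{f})\mapsto\boldAsigf$ as a $\Phi_l(L^2(\gO'),L^2(\gO')^m)$-valued map on a suitable open subset $V\subset Y_m^0$ containing an open neighborhood of $X_m$ (which lies in $Y_m^0$ by Theorem \ref{T:XYaet}). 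The second part of Corollary \ref{C:real}, applied to $\Real V$, then yields an open dense $W\subset\Real V$ of injectivity; since restriction of an open dense set to a dense topological subspace is open dense, $W\cap\overline{X_m}$ is open dense in $\overline{X_m}$, completing the proof.

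The main obstacle I anticipate is verifying that the base point has the requisite injectivity in part (2): one must check that the Dirichlet data $\mathbf{f}_0=(x_1,x_2,x_3,\ldots,x_2)$, with its $m-4$ arbitrary middle entries, produces a vector operator whose kernel is trivialized by the four distinguished components via \cite{KuKuAET}. This is conceptually immediate but must be spelled out, as the arbitrariness of the middle entries plays no role in establishing injectivity (they only contribute extra constraints). A secondary technical point is confirming that the complex deformations of Theorem \ref{T:XYaet} preserve the analyticity framework of Lemma \ref{L:dependence}---in particular, that the CGO traces $f^I_{\sigma,\boldrho}$ depend analytically (indeed polynomially in $t$) on the deformation parameter with values in $H^{1/2}(\dgO)$---but this is clear from Proposition \ref{P:cgo} and the smoothness of $\mu_0$.
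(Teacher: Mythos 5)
Your proposal matches the paper's own proof essentially step for step: injectivity at the base point from the left-inverse construction of \cite{KuKuAET}, analyticity via Lemmas \ref{L:dependence} and \ref{L:ana_wrong_spaces}, semi-Fredholmity transported along $Y_m^0$ via Theorem \ref{T:XYaet}, and the generic-injectivity conclusion from Corollary \ref{C:real} followed by restriction to $\overline{X_m}$. The only cosmetic discrepancy is the citation for Fredholmity of the three-functional AET operator in part (1), where the paper invokes \cite[Theorem 3.6]{KuchStein} (together with the nowhere-parallel gradients result of \cite{ale}) rather than Theorem 3.2.
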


\begin{proof} Again we prove each statement separately.

(1)\quad
According to Lemma \ref{L:dependence}, the operators $A_{\sigma,x_j}$ depend analytically
on $\sigma\in \CzOprime$
as operators mapping $\Lad$ into $L^1(\gO')$.
An argument very similar to the one in the proof of Lemma \ref{L:dependence}
shows that the dependence of $A_{\sigma,(x_1,x_2)}$ as an operator mapping $\Lad$ into $L^1(\gO')$ also is analytic\footnote{Indeed, the dependence of each $u_{\sigma,x_j}$ and $v^{(j)}$
on the pair $(\sigma,x_j)$ is analytic, and all other operations are algebraic or differentiation.}.
Hence the map

\bea \Lad &\rightarrow& L^1(\gO)^3 \nonumber\\
\sigma &\mapsto& \boldAsigf \nonumber\eea
is analytic.
By Lemma \ref{L:ana_wrong_spaces},
$\boldAsigf$ is an analytic family of operators mapping $L^2(\gO')$ into $L^2(\gO')^3$.  As proved in \cite{ale}, the gradients $\nabla u_{\sigma,x_1}$ and $\nabla u_{\sigma,x_2}$
are nowhere parallel in $\gO$.  By \cite[Theorem 3.6]{KuchStein}, $\boldAsigf\in\Phi(L^2(\gO'),L^2(\gO'))$ when $\sigma\in\CinfzOprime$.
Because the set of Fredholm operators is open in the operator norm topology,
there is an open dense set $V\subset \CzOprime$, containing all $\sigma\in\Real\CinfzOprime$, where the operators are also Fredholm.
Then the first statement of Theorem \ref{T:ZKKP2} applied to $\Real V$
(in the version of the first statement of Corollary \ref{C:real}) implies that there exists a set $W$, open and dense in $\mathrm{Re\:}\CzOprime$, where the operators are injective.
This proves the first statement.

%
%

(2)\quad Next we consider $n=3$ and
proceed to proving the second statement of the theorem.
According to Lemma \ref{L:dependence}, the operators $A_{\sigma,f_j}$ depend analytically
on $\sigma\in \CzOprime$
as operators mapping $\Lad$ into $L^1(\gO')$.
Again, an argument very similar to the one in the proof of Lemma \ref{L:dependence}
shows that the dependence of $A_{\sigma,(f_1,f_m)}$ as an operator mapping $\Lad$ into $L^1(\gO')$ also is analytic\footnote{As before, the dependence of each $u_{\sigma,f_j}$ and $v^{(j)}$
on the pair $(\sigma,f_j)$ is analytic, and all other operations are algebraic or differentiation.}.
Hence the map

\bea \Lad\times H^{1/2}(\dgO)^m&\rightarrow& L^1(\gO)^m \nonumber\\
(\sigma,f_1,\ldots,f_m) &\mapsto& \boldAsigf \nonumber\eea
is analytic.
By Lemma \ref{L:ana_wrong_spaces},
$\boldAsigf$ is an analytic family of operators on $Y_m^0$ mapping $L^2(\gO')$ into $L^2(\gO')^m$.
There exists a subset $V\subset \CzOprime\times H^{1/2}(\dgO)$, open and dense in $\overline{Y_m^0}$, such that $\boldAsigf\in \Phi_l(L^2(\gO'),L^2(\gO')^m)$ for $(\sigma,\mathbf{f})\in V$.
Since $X_m\subset Y_m^0$, we may assume that $V$ contains an open neighborhood of $X_m$ in $\Real(\CzOprime\times H^{1/2}(\dgO)^m)$.
Since the particular operator $A_{0,\mathbf{f}_0}$ is injective, $\Real Y_m^0$ contains a point $(\sigma,\mathbf{f})$ at which $\boldAsigf$ is injective.
Then the second statement of Theorem \ref{T:ZKKP2}
(in the version of the first statement of Corollary \ref{C:real}) applied to $\Real V$ implies that there exists a set $W$, open and dense in $\Real V$,
where the operators are injective.
Since the restriction of an open dense set to a dense topological subspace is still open dense in that subspace, $W$ is open dense in $\overline{X_m}$. \end{proof} \hfill $\Box$

\subsection{Quantitative Photoacoustic Tomography}\label{S:uniquenessQPAT}

The standard model for diffusive regime photon propagation in biological tissues is

\be\label{qpat}
\begin{cases}
L_{\sigma,\gamma}u:=-\nabla \cdot (e^\sigma\nabla u)+e^\gamma u=0 \\
u|_{\dgO}=f

\end{cases}
\ee
(see, e.g. \cite{wang2}).
Here $\sigma$ and $\gamma$ are the log-diffusion and log-attenuation coefficients, respectively.

The photoacoustic tomography (PAT) procedure, done first, provides one with the values inside $\gO$ of the function

\be\label{mq}
F(x)=\Gamma (x) e^{\gamma(x)} u(x) .
\ee
The function $\Gamma(x)$ is the so-called Gr\"{u}neisen coefficient\footnote{The Gr\"{u}neisen coefficient is in principle also not known, so one might want to include it as an unknown in the reconstruction procedure, e.g. \cite{BalRenQPAT}. We are not doing this here.
In \cite{BalRenQPAT} it was shown that only two out of the three unknown functions $\Gamma$, $\gamma$, and $\sigma$ can be recovered.}
describing the transfer of
electromagnetic energy into acoustic energy.  Here we assume $\Gamma(x)$ to be identically equal to 1.

This function is the initial data for quantitative photoacoustic tomography (QPAT), which strives to reconstruct the coefficients $\sigma$ and $\gamma$ from the data (\ref{mq}).

We will denote by $F_j(x)$, $j=1,2,\ldots ,2J$ the internal data (\ref{mq}) that correspond to
solutions of (\ref{qpat}) with different boundary data functions $f_j$.

For such a measurement $F_j$, the mapping
$(\sigma,\gamma)\rightarrow F_j$ is Fr\'{e}chet differentiable at a pair of smooth background coefficients $(\sigma_0,\gamma_0)$
as a map from $\Lad\times \left(\Lad\cap H^1_0(\gO')\right)\rightarrow H^1(\gO)$ (see \cite{KuchStein}). The derivative can be computed formally as before:

\bea \sigma&=&\sigma_0+\gep\rho \nonumber\\
\gamma&=&\gamma_0+\gep\nu \nonumber\\
u^{(j)}&=&u_0^{(j)}+\gep v^{(j)}+o(\gep) \eea
where $\rho\in\Lad$, $\nu\in\Lad\cap H^1_0(\gO')$. Substitution into (\ref{qpat}) shows that $v^{(j)}\in H^1(\gO)$ solves the boundary value problem

\be
\begin{cases}
-\nabla\cdot(\esigz\nabla v^{(j)})+\egz v^{(j)}
= \nabla\cdot(\rho\esigz\nabla u_0^{(j)})-\nu \egz u_0^{(j)}\\
 v^{(j)}|_{\dgO}=0.

 \end{cases}
\ee
We thus find that the differential of the mapping $F_j$ is
\bea
A_j(\rho,\nu):&=&dF_j (\rho,\nu)\nonumber\\
&=&\nu u_0^{(j)} -L^{-1}_{\sigma_0,\gamma_0}\left(\nu u_0^{(j)}\right)+L^{-1}_{\sigma_0,\gamma_0}\left(\nabla\cdot(\rho\esigz\nabla u_0^{(j)})\right).
\label{E:dF}
\eea
Here $L^{-1}_{\sigma_0,\gamma_0}$ refers to the inverse of $L_{\sigma_0,\gamma_0}$ on $\gO$ with a homogeneous Dirichlet boundary condition on $\dgO$.

We observe that the operator $A_j\in L(\Lad^2,L^2(\gO'))$ is well defined for any $\sigma_0,\gamma_0\in L^\infty(\gO)$ and $f_j\in H^{1/2}(\dgO)$ \footnote{In fact,
if $\tilde{\sigma}, \tilde{\gamma}\in L^\infty(\gO)$ are given functions
and we define $\tilde{L}_{\mathrm{ad}}^\infty(\gO):=\left\{\sigma\in L^\infty(\gO)\:|\:\sigma=\tilde{\sigma}\:\mathrm{on}\;\gO\backslash\gO'\right\}$,
then the Fr\'echet derivative of $F_j$, computed with respect to $\sigma_0,\gamma_0\in \tilde{L}_{\mathrm{ad}}^\infty(\gO)$, is exactly given by (\ref{E:dF}).
We will not use this fact, however.}.
The analytic dependence of the operator $A_j$ on $\sigma_0$, $\gamma_0$ and $f_j$ is given by the following lemma:

\begin{lemma}\label{L:qpatdependence}
 The map

 \bea L^\infty(\gO)\times L^\infty(\gO) \times H^{1/2}(\dgO) &\rightarrow& L\left(\Lad,L^2(\gO')\right)\nonumber\\
 \label{E:qpatdependence} (\sigma_0,\gamma_0,f_j)&\mapsto& A_j \eea
 is analytic.
\end{lemma}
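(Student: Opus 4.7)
The plan is to express $A_j$ as a composition of a few analytic Banach-space-valued maps and invoke the standard fact that analyticity is preserved by composition, by sums, and by continuous bilinear operations. Three building blocks suffice. First, the exponential map $\sigma\mapsto e^\sigma$ from $L^\infty(\gO)$ to itself is entire-analytic because the series $\sum_k\sigma^k/k!$ converges in the Banach algebra norm. Second, the Dirichlet solution map $\mathcal{S}:(\sigma_0,\gamma_0,f_j)\mapsto u_0^{(j)}\in H^1(\gO)$ associated with (\ref{qpat}) is analytic. Third, the resolvent map $\mathcal{R}:(\sigma_0,\gamma_0)\mapsto L^{-1}_{\sigma_0,\gamma_0}\in L(H^{-1}(\gO),H^1_0(\gO))$ is analytic.

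For $\mathcal{S}$, I would fix a bounded extension $E:H^{1/2}(\dgO)\to H^1(\gO)$, write $u_0^{(j)}=Ef_j+\tilde u$ with $\tilde u\in H^1_0(\gO)$ solving $L_{\sigma_0,\gamma_0}\tilde u=-L_{\sigma_0,\gamma_0}Ef_j$, and observe that the sesquilinear form attached to $L_{\sigma_0,\gamma_0}$ depends analytically on $(e^{\sigma_0},e^{\gamma_0})$ via the exponential map and is coercive on $H^1_0(\gO)$ for $(\sigma_0,\gamma_0)$ in a complex neighborhood of the real-valued parameters of interest. Lax--Milgram together with the analytic implicit function theorem then yield analyticity of $\mathcal{S}$. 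For $\mathcal{R}$ the same set-up gives, near any base point $(\sigma_\ast,\gamma_\ast)$, the Neumann expansion
\begin{equation}
L^{-1}_{\sigma_0,\gamma_0}=\bigl(I+L^{-1}_{\sigma_\ast,\gamma_\ast}(L_{\sigma_0,\gamma_0}-L_{\sigma_\ast,\gamma_\ast})\bigr)^{-1}L^{-1}_{\sigma_\ast,\gamma_\ast},
\end{equation}
an analytic operator-valued perturbation series.

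With $\mathcal{S}$ and $\mathcal{R}$ in hand I assemble (\ref{E:dF}). The map $(\rho,\nu)\mapsto \nu\, u_0^{(j)}$ is bilinear continuous from $\Lad\times H^1(\gO)$ to $L^2(\gO')$, so its dependence on $(\sigma_0,\gamma_0,f_j)$ is analytic through $\mathcal{S}$; composing with $\mathcal{R}$ and the embedding $H^1_0(\gO)\hookrightarrow L^2(\gO')$ gives analyticity of the second summand of $A_j$. The third summand factors as
$
\rho\mapsto \rho\, e^{\sigma_0}\nabla u_0^{(j)}\in L^2(\gO)^n \xrightarrow{\nabla\cdot} H^{-1}(\gO) \xrightarrow{\mathcal{R}} H^1_0(\gO)\hookrightarrow L^2(\gO'),
$
with each factor analytic in $(\sigma_0,\gamma_0,f_j)$, because multiplication by an $L^2$ field is a continuous linear operator from $\Lad$ to $L^2(\gO)^n$ with norm controlled by the $L^2$-norm of the field. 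Summing, $A_j$ depends analytically on $(\sigma_0,\gamma_0,f_j)$ with values in $L(\Lad^2,L^2(\gO'))$.

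The main obstacle is essentially bookkeeping: at every stage one must verify that the target space of one building block matches the domain of the next, and, more delicately, that the formal Taylor expansions converge in the operator norm of $L(\Lad^2,L^2(\gO'))$ rather than just pointwise in $(\rho,\nu)$. If this becomes cumbersome, one may first establish analyticity into a weaker operator target (e.g.\ $L(\Lad^2,H^{-1}(\gO'))$) and then upgrade to the stronger one via Lemma \ref{L:ana_wrong_spaces}, using a local uniform bound on $\|A_j\|_{L(\Lad^2,L^2(\gO'))}$ obtained from the Lax--Milgram estimate applied to $\mathcal{R}$ on a bounded neighborhood of parameters.
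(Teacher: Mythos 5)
Your proposal is correct and follows essentially the same route as the paper's proof: analyticity of the solution map $u_0^{(j)}$ in $(\sigma_0,\gamma_0)$ (linear in $f_j$), analyticity of the inverse $L^{-1}_{\sigma_0,\gamma_0}$ (the paper cites the general fact that operator inversion is analytic, which your Neumann-series expansion makes explicit), and assembly of $A_j$ from (\ref{E:dF}) by continuous multilinear operations. The extra detail you supply (extension operator, Lax--Milgram, and the optional upgrade via Lemma \ref{L:ana_wrong_spaces}) fills in steps the paper leaves implicit but does not change the argument.
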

The proof is given in section \ref{S:lemmas}.

%

We aim to establish uniqueness of reconstruction for $(\rho,\nu)$ from the data $(A_1(\rho,\nu),\ldots,A_{2J}(\rho,\nu))$
for an open dense set of real-valued background coefficients $(\sigma_0,\gamma_0)\in C(\overline{\gO})^2$ and boundary data $f_1,\ldots,f_{2J}\in H^{1/2}(\dgO)$.
In order to do this, we establish uniqueness first for a particular pair of background coefficients.
This is done in the following lemma.

\begin{lemma}
\label{L:QPATpoint}
Let $\gl>0$, and let $\esigz=\gl^{-2}$, $\egz=1$.
\ben
\item
 Let $n=2$, let three sets of boundary values in (\ref{qpat}) be given as

\bea f_{1,1}&=&e^{\gl x_1}\;, \nonumber\\
f_{1,2}&=&e^{\gl x_2}\;, \nonumber\\
\label{E:uniquepointbdry} f_{2,2}&=&e^{-\gl x_2}\;, \eea
and let $\gl$ be sufficiently small.
Then the corresponding data (\ref{E:dF})
uniquely determine $\rho$ and $\nu$.
\item Let $n=3$, let four sets of boundary values in (\ref{qpat}) be given as

\bea f_{1,1}&=&e^{\gl x_1}\;, \nonumber\\
f_{1,2}&=&e^{\gl x_2}\;, \nonumber\\
f_{2,2}&=&e^{-\gl x_2}\;, \nonumber\\
f_{3,3}&=&e^{\gl x_3}\;, \eea
and let $\gl$ be sufficiently small.
Then the data (\ref{E:dF})
uniquely determine $\rho$ and $\nu$.
\een
\end{lemma}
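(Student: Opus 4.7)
The plan is to exploit the explicit structure of the background. When $e^{\sigma_0}=\lambda^{-2}$ and $e^{\gamma_0}=1$, the operator $L_{\sigma_0,\gamma_0}$ collapses to the constant-coefficient operator $-\lambda^{-2}\Delta+1$, and each prescribed boundary datum $e^{\pm\lambda x_k}$ extends to a global exponential solution $u_0^{(j)}$ satisfying the identities $\Delta u_0^{(j)}=\lambda^2 u_0^{(j)}$ and $\nabla u_0^{(j)}=\pm\lambda\mathbf{e}_k u_0^{(j)}$. Applying $L_{\sigma_0,\gamma_0}$ to $A_j(\rho,\nu)=0$ using the explicit formula (\ref{E:dF}) turns the injectivity question into the distributional identity
\[
\Delta(\nu u_0^{(j)})=\nabla\cdot(\rho\nabla u_0^{(j)})\quad\text{in }\gO.
\]
Expanding both sides, substituting the identities for $u_0^{(j)}$, and dividing by the nowhere-vanishing factor $u_0^{(j)}$ reduces each measurement to the single scalar equation
\[
\Delta\nu+\lambda^2(\nu-\rho)\pm\lambda\partial_k(2\nu-\rho)=0,
\]
with the sign and index $k$ dictated by the corresponding boundary datum.

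The heart of the argument is a purely algebraic combination of these scalar equations. In the two-dimensional case, summing the equations for $f_{1,2}$ and $f_{2,2}$ cancels the transport term and yields the zeroth-order identity $\Delta\nu+\lambda^2(\nu-\rho)=0$, while their difference gives $\partial_2(2\nu-\rho)=0$; subtracting the zeroth-order identity from the equation for $f_{1,1}$ then forces $\partial_1(2\nu-\rho)=0$. In three dimensions the additional datum $f_{3,3}$ plays the analogous role in the $x_3$-direction. In both cases one obtains $\nabla(2\nu-\rho)=0$ in $\gO$, so $2\nu-\rho$ is locally constant; but both $\nu$ and $\rho$ vanish on the open set $\gO\setminus\overline{\gO'}$ by admissibility, so the constant must be zero and $\rho=2\nu$ throughout $\gO$. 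Inserting this back into the zeroth-order identity gives $(-\Delta+\lambda^2)\nu=0$ in $\gO$, and since $\nu$ still vanishes on the nonempty open subset $\gO\setminus\overline{\gO'}$, the classical unique-continuation theorem for constant-coefficient elliptic equations forces $\nu\equiv 0$ in $\gO$, hence $\rho\equiv 0$.

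The main delicacy is regularity bookkeeping: $\rho$ is only $L^\infty$ while $\nu$ lies in $\Lad\cap H^1_0(\gO')$, so the identities above must be interpreted distributionally, and the division by $u_0^{(j)}$ is only legitimate once one has justified the appropriate weak formulation. I expect this to be where the hypothesis that $\lambda$ be sufficiently small enters: for small $\lambda$ the operator $-\lambda^{-2}\Delta+1$ is strongly coercive, which gives the cleanest bootstrapping of interior regularity through $L_{\sigma_0,\gamma_0}^{-1}$ together with uniform positivity of the exponentials $u_0^{(j)}$ on $\overline{\gO}$ with explicit constants. Once these technicalities are in place, the algebraic cancellation and the final unique-continuation step are entirely standard, and the two- and three-dimensional arguments are identical except that one extra exponential direction is needed, which is exactly the role of $f_{3,3}$.
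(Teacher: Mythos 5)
Your proposal is correct, but it follows a genuinely different route from the paper. The paper eliminates $\nu$ from one of the three identities by writing $\nu=\gl e^{-\gl x_2}\Delta^{-1}(\ldots)$ with the Dirichlet inverse Laplacian, substitutes into the remaining identities, differentiates them in $x_1$ and $x_2$ and adds, obtaining a second-order operator $e^{\gl x_1}\partial_1^2-e^{\gl x_2}\partial_1\partial_2+2e^{-\gl x_2}\partial_2^2$ acting on $\rho$ plus $O(\gl)$ perturbations; injectivity then comes from coercivity and Lax--Milgram, which is precisely where the hypothesis ``$\gl$ sufficiently small'' is used, and the case $\rho\in\Lad$ (merely $L^\infty$) is handled afterwards by a smoothness bootstrap for kernel elements via \cite[Theorem 4.1]{KuchStein}. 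You instead divide each identity by the nowhere-vanishing exponential and take linear combinations, which yields $\Delta\nu+\gl^2(\nu-\rho)=0$ together with $\nabla(2\nu-\rho)=0$ distributionally; the support conditions force $\rho=2\nu$, and then $(-\Delta+\gl^2)\nu=0$ with $\nu\in H^1_0$ gives $\nu\equiv0$ (indeed the simple energy identity $\int|\nabla\nu|^2+\gl^2|\nu|^2=0$ suffices, so unique continuation is overkill). Your route is more elementary and in fact stronger: it needs no smallness of $\gl$, no Lax--Milgram, and no regularity bootstrap, since the distributional manipulations (products and quotients by smooth nonvanishing functions, and ``all first derivatives vanish $\Rightarrow$ constant'' on the connected domain $\gO$) are valid directly for $\rho\in L^\infty$ and $\nu\in\Lad\cap H^1_0(\gO')$. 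Your closing speculation that smallness of $\gl$ is needed for regularity bookkeeping is therefore misplaced --- it is an artifact of the paper's perturbative argument, not of yours --- but this does not affect the correctness of what you wrote; the $n=3$ case goes through exactly as you indicate, with $f_{3,3}$ supplying the missing direction.
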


\begin{proof}

For simplicity let us denote the operator $L_{\sigma_0,\gamma_0}$ with these values of $\sigma_0$ and $\gamma_0$ by $L_\gl$.
Equation (\ref{qpat}) then becomes

\be\label{E:qpatgl}
\begin{cases}
L_\gl u:=\left(-\frac{1}{\gl^2}\Delta +1\right) u=0 \\
u|_{\dgO}=f

\end{cases}.
\ee

From equation (\ref{E:dF}), the Fr\'echet derivatives of the functionals $A_j$ satisfy

\be \label{E:frdiffgl}
L_\gl A_j(\rho,\nu)=-\frac{1}{\gl^2}\Delta(\nu u_0^{(j)})+\frac{1}{\gl^2}\nabla\cdot(\rho\nabla u_0^{(j)}).\ee

Some solutions to (\ref{E:qpatgl}) are given by $u=e^{\pm\gl x_i}$, as long as $f$ is taken to be the boundary value of this function.

We first concentrate on the case $n=2$.
Assume for the moment that $\rho\in \Lad\cap H^1_0(\gO)$ instead of just $\Lad$.

Using these data we obtain from equation (\ref{E:frdiffgl}) the three equations

\bea \label{E:larry} L_\gl A_{1,1}&=& -\frac{1}{\gl^2}\Delta(\nu e^{\gl x_1}) + \frac{1}{\gl}e^{\gl x_1}(\partial_1\rho+\gl\rho) \\
\label{E:moe} L_\gl A_{1,2}&=& -\frac{1}{\gl^2}\Delta(\nu e^{\gl x_2}) + \frac{1}{\gl}e^{\gl x_2}(\partial_2\rho+\gl\rho) \\
\label{E:curly} L_\gl A_{2,2}&=& -\frac{1}{\gl^2}\Delta(\nu e^{-\gl x_2}) - \frac{1}{\gl}e^{-\gl x_2}(\partial_2\rho-\gl\rho) \eea

From equation (\ref{E:moe}) we have

\be \label{E:nuequals}
\nu = \gl e^{-\gl x_2}\Delta^{-1}(e^{\gl x_2}\partial_2\rho + \gl e^{\gl x_2}\rho - \gl L_\gl A_{1,2}). \ee
Here $\Delta^{-1}$ means the inverse of the Laplacian on $\gO$ with a homogeneous Dirichlet boundary condition.
This inverse is a bounded operator from $H^s(\gO)$ into $H^{s+2}(\gO)$ for $s\geq -1$ \cite{taylor}.
Inserting this into equations (\ref{E:larry}) and (\ref{E:curly}) gives

\bea e^{\gl x_1}(\partial_1\rho + \gl \rho) - \Delta\left(e^{\gl(x_1-x_2)}\Delta^{-1}(e^{\gl x_2}\partial_2\rho
 +\gl e^{\gl x_2}\rho - L_\gl A_{1,2})\right) &=& L_\gl A_{1,1} \nonumber\\
-e^{-\gl x_2}(\partial_2\rho - \gl \rho) - \Delta\left(e^{-2\gl x_2}\Delta^{-1}(e^{\gl x_2}\partial_2\rho
 +\gl e^{\gl x_2}\rho - L_\gl A_{1,2})\right) &=& L_\gl A_{2,2} \nonumber\eea

Differentiating the first of these with respect to $x_1$ and using the identity $\Delta (uv)=u\Delta v+v\Delta u+2\nabla u\cdot\nabla v$,
we obtain

\bea \gl \partial_1 L_\gl A_{1,1} &=&e^{\gl x_1}\partial^2_1\rho 
+2\gl e^{\gl x_1}\partial_1\rho+\gl^2 e^{\gl x_1}\rho \nonumber\\
&-&\partial_1\bigg[\Delta e^{\gl(x_1-x_2)}\Delta^{-1}(e^{\gl x_2}\partial_2\rho +\gl e^{\gl x_2}\rho - L_\gl A_{1,2}) \nonumber\\
&+& e^{\gl(x_1-x_2)}(e^{\gl x_2}\partial_2\rho +\gl e^{\gl x_2}\rho - L_\gl A_{1,2}) \nonumber\\
&+&2\nabla  e^{\gl(x_1-x_2)} \cdot \nabla\left(\Delta^{-1} (e^{\gl x_2}\partial_2\rho +\gl e^{\gl x_2}\rho - L_\gl A_{1,2})\right)\bigg]
\eea

We collect terms that do not depend on $\rho$ on the left hand side, and consolidate terms left over that are multiplied by $\gl$ after differentiation:

\bea
\gl \partial_1 \bigg(L_\gl A_{1,1} &-& \Delta e^{\gl(x_1-x_2)}\Delta^{-1}L_\gl A_{1,2} \nonumber\\
&-&e^{\gl(x_1-x_2)}L_\gl A_{1,2} - 2\nabla  e^{\gl(x_1-x_2)} \cdot \nabla\Delta^{-1}L_\gl A_{1,2} \bigg) \nonumber\\
&=& e^{\gl x_1}\partial^2_1\rho - e^{\gl x_1}\partial_1\partial_2\rho \nonumber\\
\label{E:par1}
&+& O(\gl)_{H^1_0(\gO)\rightarrow H^1_0(\gO)}(\rho) + \sum_{i=1,2} O(\gl)_{L^2(\gO)\rightarrow L^2(\gO)}(\partial_i\rho) \eea

We next take minus the derivative of equation (\ref{E:curly}) with respect to $x_2$, giving

\bea
-\gl\partial_2 \bigg(L_\gl A_{2,2} &-& \Delta e^{-2\gl x_2}\Delta^{-1} L_\gl A_{1,2} \nonumber\\
&-& e^{-2\gl x_2} L_\gl A_{1,2} - 2\nabla e^{-2\gl x_2}\cdot\nabla\Delta^{-1} L_\gl A_{1,2} \bigg) \nonumber\\
&=& 2e^{-\gl x_2}\partial_2^2\rho \nonumber\\
\label{E:par2}
&+& O(\gl)_{H^1_0(\gO)\rightarrow H^1_0(\gO)}(\rho) + O(\gl)_{L^2(\gO)\rightarrow L^2(\gO)}(\partial_2\rho) \eea
Adding equations (\ref{E:par1}) and (\ref{E:par2}),  we obtain

\bea \label{E:bigequation}
e^{\gl x_1}\partial^2_1\rho - e^{\gl x_2}\partial_1\partial_2\rho +2e^{-\gl x_2}\partial_2^2\rho  
 &+& \sum_{i=1,2} O(\gl)_{L^2(\gO)\rightarrow L^2(\gO)}(\partial_i\rho) \nonumber\\
&+&O(\gl)_{H^1_0(\gO)\rightarrow H^1_0(\gO)}(\rho)\nonumber\\
&=& G(A_{1,1},A_{1,2},A_{2,2}) \eea
($G(A_{1,1},A_{1,2},A_{2,2})\in H^{-1}(\gO)$ is the sum of all terms in (\ref{E:par1}) and (\ref{E:par2}) containing $A_{1,1},A_{1,2}$, or $ A_{2,2}$.)

The operator

\be A_\gl=e^{\gl x_1}\partial^2_1 - e^{\gl x_2}\partial_1\partial_2 +2e^{-\gl x_2}\partial_2^2 \nonumber\ee
is elliptic on $\gO$ for $0\leq\gl<<1$, as can be seen by easily checking for $\gl=0$.  Let $c_1(\gl)<0$ be the largest eigenvalue of $A_\gl$.
As a consequence of Rayleigh's formula, $c_1(\gl)$ depends continuously on $\gl$.

Let $P_\gl:H^1(\gO)\rightarrow L^2(\gO)$ be the operator such that the $\gl$-dependent terms in equation (\ref{E:bigequation})
equal $\gl P_\gl\rho$.  Note from (\ref{E:par1}) and (\ref{E:par2}) that $\norm{P_\gl}_{H^1(\gO)\rightarrow L^2(\gO)}$ is bounded independent of $\gl$ for $0\leq\gl\leq 1$.
Then for any $u\in H^1(\gO)$,

\be (A_\gl u,u)+(\gl P_\gl u,u)\leq c_1(\gl)\norm{u}^2_{H^1(\gO)}+\gl\norm{P_\gl}\norm{u}^2_{H^1(\gO)}\leq \frac{c_1(\gl)}{2}\norm{u}^2_{H^1(\gO)} \ee
for $\gl$ sufficiently small.
By the Lax-Milgrim Theorem,
equation (\ref{E:bigequation}) has a unique solution for $\rho\in H^1_0(\gO)$ for this range of $\gl$.
Using equation (\ref{E:nuequals})
we get a unique solution for $\nu$ too.

Suppose now that $\rho\in\Lad$ only, instead of $\Lad\cap H^1_0(\gO')$.  By \cite[Theorem 4.1]{KuchStein} and elliptic regularity, any pair of functions $(\rho,\nu)$ in the kernel of the map (\ref{E:dF})
for the boundary data (\ref{E:uniquepointbdry}) must lie in $C^\infty_0(\gO')$.  In particular they must lie in $H^1_0(\gO')$.  Since we have proved that
the data uniquely determine $\rho$ for any $\rho\in H^1_0(\gO')$, there must be a unique solution for $\rho\in\Lad$ as well.
This proves the first statement.

%
%
Now let $n=3$.  A procedure similar to the one for 2 dimensions, using $f_{3,3}$ in an exactly analagous manner to $f_{1,1}$, yields the equation for $\rho$:

\bea \label{E:bigequation3}
e^{\gl x_1}\partial^2_1\rho - e^{\gl x_2}\partial_1\partial_2\rho &+&2e^{-\gl x_2}\partial_2^2\rho
-  e^{\gl x_2}\partial_2\partial_3\rho + e^{\gl x_3}\partial^2_3\rho  \nonumber\\                  
 &+& \sum_{i=1,2,3} O(\gl)_{L^2(\gO)\rightarrow L^2(\gO)}(\partial_i\rho) \nonumber\\
&+&O(\gl)_{H^1_0(\gO)\rightarrow H^1_0(\gO)}(\rho)\nonumber\\
&=& G(A_{1,1},A_{1,2},A_{2,2},A_{3,3}) \eea

($G(A_{1,1},A_{1,2},A_{2,2},A_{3,3})$ is again explicitly computable in a similar way to the $n=2$ case.)
Inspection of the first line shows this is an elliptic operator for $\gl$ sufficiently small, so (\ref{E:bigequation3}) has a unique
solution as before. \end{proof}

\begin{remark}\label{R:vectorfield}
Using the notation $u_{i,j}$ for the solution of (\ref{E:qpatgl}) with boundary data $f_{i,j}$
as in Lemma \ref{L:QPATpoint}, consider the vector fields formed from the pairs $u_{1,1},u_{1,2}$ and $u_{1,1},u_{2,2}$
as follows:

\bea V_1&=&u_{1,1}\nabla u_{1,2}-u_{1,2}\nabla u_{1,1} \nonumber\\
V_2 &=& u_{1,1}\nabla u_{2,2}-u_{2,2}\nabla u_{1,1}\;. \nonumber\eea
These vector fields are parallel to
$\boldeo-\boldet$ and $-\boldeo-\boldet$, respectively.  We note that $V_1$ and $V_2$ thus span $\R^2$.
Similarly, when $n=3$, the vector fields formed from the pairs $u_{1,1},u_{1,2}$ and $u_{1,1},u_{2,2}$, and $u_{1,1},u_{3,3}$, which are parallel to
$\boldeo-\boldet$, $-\mathbf{e_1}-\boldet$,
and $\boldeo-\boldeth$, span $\R^3$.
The same obviously holds true if the $f_{i,j}$ are multiplied by any constants.  \hfill $\Box$

The significance of the spanning condition on these vector fields was discussed in \cite{BalUhl} and later in \cite{KuchStein}.
\end{remark}

Let us fix $\gl>0$ small enough that the conclusions of Lemma \ref{L:QPATpoint} hold.
For convenience we change our notation slightly at this point.  We let $\sigma_0$ be such that $\esigz=\gl^{-2}$,
and we now denote the smooth background coefficients just by $\sigma$ and $\gamma$.

As in Section \ref{S:umot}, let $\chi\in C^\infty_0(\gO)$ be a cutoff function that is identically equal to 1 on $\overline{\gO''}$.
Let $\boldAsiggamf:\Lad\times(H^1_0(\gO')\cap\Lad)\rightarrow H^1_0(\gO)^{2J}$ be defined by

\be \boldAsiggamf=\chi\left(\begin{array}{c}A_1\\ \vdots\\ A_{2J}\end{array}\right)\chi\;. \ee
Because of the presence of the cutoff function $\chi$, $\boldAsiggamf$ can be viewed as an operator on $\R^n$.
It was shown in \cite{KuchStein} that $\boldAsiggamf$ is a pseudo-differential operator
with Douglis-Nirenberg parameters $s=(1,\ldots,1)$, $t=(0,1)$
and principal symbol

\bea
\boldAsiggamf(x,\xi)= \chi^2(x)\left(
  \begin{array}{cc}
    \frac{i\xi\cdot\nabla u^{(1,1)}(x)}{|\xi|^2} & u^{(1,1)}(x)  \\
 \frac{i\xi\cdot\nabla u^{(1,2)}(x)}{|\xi|^2} & u^{(1,2)}(x)  \\
 \vdots & \vdots\\
    \frac{i\xi\cdot\nabla u^{(J,1)}(x)}{|\xi|^2} & u^{(J,1)}(x)  \\
 \frac{i\xi\cdot\nabla u^{(J,2)}(x)}{|\xi|^2} & u^{(J,2)}(x)
  \end{array}\right),
\eea
Furthermore it was also shown that if at each $x\in\overline{\gO''}$ at least one of the 2 by 2 blocks

\be\label{E:determinants}
\left(
  \begin{array}{cc}
    \frac{i\xi\cdot\nabla u^{(j,1)}}{|\xi|^2} & u^{(j,1)}  \\
 \frac{i\xi\cdot\nabla u^{(j,2)}}{|\xi|^2} & u^{(j,2)}
  \end{array}\right)
\ee
is invertible, then $\boldAsiggamf$ is a left semi-Fredholm operator
from $L^2(\gO')\oplus H^1_0(\gO')$ into $H^1(\gO)^{2J}$.
This, in turn, is the case if the vector fields

\be\label{E:Vjqpat} V_j(x) := u^{(j,2)}(x)\nabla u^{(j,1)}(x) - u^{(j,1)}(x)\nabla u^{(j,2)}(x) \ee
span $\R^n$ at each point $x\in\overline{\gO''}$.

We define the following sets for $M\geq n$ and $L\geq3$:
\begin{definition}\indent
\begin{itemize}
\item $X_{ML}$ is the set of all real-valued triples
\bea
(\sigma,\gamma,\mathbf{f})&\in& \Real \left(C^\infty(\overline{\gO})^2\times H^{1/2}(\dgO)^{2ML}\right)\nonumber\\
\mathbf{f}&=&f^{(m,l,q)}\geq0\quad m=1,\ldots, M,\quad l=1,\ldots,L,\quad q=1,2\nonumber
\eea such that
the vector fields

\bea
\nonumber V_{1,1},&\ldots&,V_{M,L}\\
\nonumber V_{m,l}(x)&= &u^{(m,l,2)}(x)\nabla u^{(m,l,1)}(x) - u^{(m,l,1)}(x)\nabla u^{(m,l,2)}(x)
\eea
of the corresponding solutions of (\ref{qpat}) span the whole space $\R^n$ at every point
$x\in\overline{\gO''}$,
and such that, for each $m$ and $x$, the ratios $(u^{(m,1,1)}(x)/u^{(m,1,2)}(x))$ and $(u^{(m,l,1)}(x)/u^{(m,l,2)}(x))$
are not equal for at least one value of $l\geq3$.
\item $\overline{X_{ML}}$ is the closure of $X_{ML}$ in $\Real \left(C(\overline{\gO})^2\times  H^{1/2}(\dgO)^{2ML}\right)$.
\item $Y_{ML}$ is the set of (possibly complex-valued) triples
$$
(\sigma,\gamma,\mathbf{f})\in C^\infty(\overline{\gO})^2\times H^{1/2}(\dgO)^{2ML}
$$
 such that
 $$
 \boldAsiggamf\in\Phi_l(L^2(\gO')\oplus H^1_0(\gO'),H^1(\gO)^{2ML}).
 $$
 \item $Y_{ML}^0$ is the connected component of $Y_{ML}$ containing the point $(\sigma_0,0,\mathbf{f}_0)$, where
 $\mathbf{f}_0$ is an extension of the boundary data of Lemma \ref{L:QPATpoint}
 to a set of $2ML$ boundary data functions in such a way that $(\sigma_0,0,\mathbf{f}_0)$
 is contained in $X_{ML}$
 (that such a set of boundary data $\mathbf{f}_0$ exists will be part Theorem \ref{T:XYqpat}).
 \item $\overline{Y_{ML}^0}$ is the closure of $Y_{ML}^0$ in $C(\overline{\gO})^2\times H^{1/2}(\dgO)^m$.
\end{itemize}
\end{definition}

We note that the condition $f^{(m,l,q)}\geq0$ in the definition of $X_{ML}$ implies that the corresponding solutions
$u^{(m,l,q)}$ are all bounded below by a positive constant on $\overline{\gO''}$.

The following theorem is analogous to Theorems \ref{T:XY} and \ref{T:XYaet}.
\begin{theorem}\label{T:XYqpat}
\item Let $X_{ML}$, $Y_{ML}$ and $Y_{ML}^0$ be as above.  Then,
\begin{enumerate}
\item There exists a set of $2ML$ boundary data functions $\mathbf{f}_0$, extending the boundary data of Lemma \ref{L:QPATpoint},
such that $(\sigma_0,0,\mathbf{f}_0)\in X_{ML}$; 
\item $X_{ML}\subset Y_{ML}^0$.
\end{enumerate}
\end{theorem}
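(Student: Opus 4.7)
The plan mirrors the two-step structure used in the proofs of Theorems \ref{T:XY} and \ref{T:XYaet}: first exhibit an explicit anchor point in $X_{ML}$ at the base coefficients $(\sigma_0,0)$, then connect an arbitrary point of $X_{ML}$ to this anchor through a path in $Y_{ML}$ built from complex geometrical optics. For part (1), I take the $3$ (respectively $4$) boundary data functions from Lemma \ref{L:QPATpoint} as a seed; these are exponentials and hence strictly positive on $\dgO$, and by Remark \ref{R:vectorfield} the vector fields formed from them span $\R^n$ on $\overline{\gO''}$. To produce $2ML$ boundary functions $f^{(m,l,q)}_0$, I take each pair $(f^{(m,l,1)}_0,f^{(m,l,2)}_0)$ to be of the form $(c_{m,l,1}f_{i_m,j_m},c_{m,l,2}f_{i_m,j'_m})$ for positive constants $c_{m,l,q}>0$, where $(f_{i_m,j_m},f_{i_m,j'_m})$ ranges over the seed pairs. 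Spanning of the $V_{m,l}$ is preserved under positive scaling (Remark \ref{R:vectorfield}), while the ratios $u^{(m,l,1)}(x)/u^{(m,l,2)}(x)=(c_{m,l,1}/c_{m,l,2})\,u^{(m,1,1)}(x)/u^{(m,1,2)}(x)$ differ from $u^{(m,1,1)}(x)/u^{(m,1,2)}(x)$ whenever $c_{m,l,1}/c_{m,l,2}\neq c_{m,1,1}/c_{m,1,2}$, which is easy to arrange. This yields $(\sigma_0,0,\mathbf{f}_0)\in X_{ML}$.

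For part (2), fixing $(\sigma,\gamma,\mathbf{f})\in X_{ML}$, I use a three-leg deformation
\[
(\sigma,\gamma,\mathbf{f})\ \leadsto\ (\sigma,\gamma,\mathbf{f}^*)\ \leadsto\ (\sigma_0,0,\mathbf{f}^{**})\ \leadsto\ (\sigma_0,0,\mathbf{f}_0),
\]
where $\mathbf{f}^*$ is obtained from $\mathbf{f}$ by replacing, in each pair, the first coordinate $f^{(m,l,1)}$ with $if^I_{\sigma,\gamma,\boldrho^{(m,l)}}$ (the boundary trace of the imaginary part of a CGO solution) while keeping the second coordinate real and positive; $\mathbf{f}^{**}$ is the analogous tuple at the base coefficients; and the middle leg linearly contracts $(\sigma,\gamma)$ to $(\sigma_0,0)$ while transporting the CGO frequencies accordingly. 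The advantage of deforming only one entry of each pair toward CGO is that in $V_{m,l}=u^{(m,l,2)}\nabla u^{(m,l,1)}-u^{(m,l,1)}\nabla u^{(m,l,2)}$ the factor $u^{(m,l,2)}$ remains bounded below, so by the estimate (\ref{E:CGOgradbigger}) one has $V_{m,l}\sim u^{(m,l,2)}\nabla (u^{(m,l,1)})^I$ for $|\boldrho|$ large. Left semi-Fredholmity along each leg then reduces, via the symbol calculation of \cite{KuchStein} recalled just before the theorem, to showing that the $V_{m,l}$ span $\R^n$ at every $x\in\overline{\gO''}$.

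The main technical obstacle is to produce CGO solutions for the QPAT operator $L_{\sigma,\gamma}u=-\nabla\cdot(e^\sigma\nabla u)+e^\gamma u$ and to arrange the complex frequencies $\boldrho^{(m,l)}$ so that the dominant vector fields $u^{(m,l,2)}\nabla(u^{(m,l,1)})^I$ span $\R^n$ uniformly. Setting $u=e^{-\sigma/2}w$ reduces the QPAT equation to a Schr\"odinger equation $-\Delta w+qw=0$ with a bounded potential $q$, for which the standard construction of \cite{BalUhl,BalInternal} provides $u_{\boldrho}=e^{\boldrho\cdot x-\sigma/2}(1+\psi_{\boldrho})$ with $\boldrho\cdot\boldrho=0$ and analogues of (\ref{E:CGO_gradient}), (\ref{E:CGO_im_gradient}). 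Following the AET construction, I would choose the orthogonal real direction pairs $\boldk,\kperp$ entering each $\boldrho^{(m,l)}$ to cycle through pairs from the orthonormal frame $\{\boldeo,\boldet,\boldeth\}$, with rationally independent magnitudes, so that the CGO gradients $\nabla(u^{(m,l,1)})^I$ together cover every coordinate direction of $\R^n$ as $(m,l)$ varies. Taking $|\boldrho|$ sufficiently large then transfers the spanning property to the $V_{m,l}$ uniformly along the entire path. The ratio condition in the definition of $X_{ML}$ is an open condition, so it is preserved automatically during the continuous deformation and is arranged at the endpoint by the constants chosen in part (1).
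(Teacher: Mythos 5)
Your part (1) is essentially the paper's own argument (scaled/duplicated copies of the Lemma \ref{L:QPATpoint} data, Remark \ref{R:vectorfield}, constants arranged so the ratios differ), and your overall architecture for part (2) --- a three-leg deformation through CGO boundary data, with left semi-Fredholmity reduced via the symbol criterion to the spanning of the vector fields $V_{m,l}$, using (\ref{E:CGOgradbigger}) --- is the right one. The genuine gap is in the one step you dispose of in a single sentence: because you deform only \emph{one} member of each pair to a CGO trace, every field degenerates to $V_{m,l}\sim u^{(m,l,2)}\nabla u^I_{\boldrho^{(m,l)}}$, so the whole proof now rests on the claim that the \emph{single} CGO gradients $\nabla u^I_{\boldrho^{(m,l)}}(x)$ span $\R^n$ at every $x\in\overline{\gO''}$, uniformly along all three legs. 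This is not automatic and is not proved by ``cycling through coordinate pairs with rationally independent magnitudes.'' Each such gradient is confined to its $\boldk_m\kperp_m$-plane with direction $\cos\theta\,\kperp_m+\sin\theta\,\boldk_m$, $\theta=\rho\,\kperp_m\cdot x/\sqrt2$, which rotates with $x$; the degeneracies are pointwise conditions on $x$, not arithmetic conditions on the frequencies. For instance, on the slice $\kperp_m\cdot x=0$ (or any slice where the relevant phases hit multiples of $\pi$) \emph{every} CGO built on $\kperp_m$ points exactly along $\kperp_m$ no matter how its magnitude was chosen, and one must then check that CGOs with other $\kperp$'s do not simultaneously fall into the same hyperplane at those same points --- e.g.\ in $n=2$, a point with $x_1=0$ and $\rho_3x_2/\sqrt2\in\pi/2+\pi\Z$ makes a $\kperp=\boldet$ CGO also point along $\boldeo$, so all your directions can collapse onto a line. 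Ruling such points out requires a careful joint choice of directions and frequencies and an explicit arithmetic argument; rational independence of the magnitudes alone does not do it.

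This is exactly the difficulty the paper's proof is organized around, and it is why the definition of $X_{ML}$ carries the ratio condition and $L\geq3$: the paper deforms \emph{both} members of each pair to CGO data, so that on the middle leg each block's field is $\propto\sin(\theta_{m,l,1}-\theta_{m,l,2})\kperp_m$ (controlled by rational independence of the \emph{differences} of frequencies), while on the outer legs the field is a combination $-u^{(m,l,1)}\nabla u^I_{\boldrho_{m,l,2}}+u^{(m,l,2)}\nabla u^I_{\boldrho_{m,l,1}}$ of two CGO gradients in a fixed plane; the repeated frequencies for $l\geq3$ together with the ratio condition then supply a second, non-parallel field in the same plane at the exceptional points. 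In your scheme the ratio condition is never used (your comment that it is ``preserved along the deformation'' is beside the point --- only membership in $Y_{ML}$ must hold along the path), which is a signal that the degenerate-point analysis has been skipped rather than replaced. To repair your variant you would either have to prove the pointwise spanning statement for single CGO gradients for an explicit admissible choice of the $\boldrho^{(m,l)}$ (plausible with $ML\geq 3n$ CGOs, but it needs the same kind of case analysis), or revert to deforming both entries of each pair as the paper does. The remaining items you leave implicit (uniform positive lower bound for $u^{(m,l,2)}_{\sigma_t,\gamma_t}$ along the coefficient leg, uniformity of the CGO remainder estimates in $t$) are minor and fixable.
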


In particular, the first statement of Theorem \ref{T:XYqpat} implies that $X_{ML}$ is nonempty, 
and that $Y_{ML}^0$ can be well defined by
selecting an appropriate extension $\mathbf{f}_0$ of the boundary data of Lemma \ref{L:QPATpoint} and letting $Y_{ML}^0$ be the connected component of $Y_{ML}$ containing $(\sigma_0,0,\mathbf{f}_0)$.

\begin{proof}
We start by proving the first statement.
Let $n=2$, let $(\sigma,\gamma)=(\sigma_0,0)$, and let

\bea
f^{(1,1,1)}&=&e^{\gl x_1} \nonumber\\
f^{(1,1,2)}&=&e^{\gl x_2} \nonumber\\
f^{(1,3,1)}&=&e^{\gl x_1} \nonumber\\
f^{(1,3,2)}&=&e^{-\gl x_2-c}\;. \label{E:boundarynonempty}\eea
By Remark \ref{R:vectorfield},
the vector fields $V_{1,1}$ and $V_{1,3}$ span $\R^2$ at every $x\in\gO$,
so any extension of (\ref{E:boundarynonempty}) to a set of $2ML$ functions lies in $Y_{ML}$.  If $c$ is taken sufficiently large depending on $\gO$,
the ratios $u^{(1,1,1)}(x)/u^{(1,1,2)}(x)$ and $u^{(1,3,1)}(x)/u^{(1,3,2)}(x)$ are easily observed to be unequal for every $x\in\overline{\gO''}$.  By choosing a particular extension of these four boundary value functions
to a set of $2ML$ functions $\mathbf{f}_0$ in a way that keeps the necessary ratios unequal (e.g. by duplicating these functions indexed in a proper way),
we see that $(\sigma_0,0,\mathbf{f}_0)\in X_{ML}$.  Hence $X_{ML}$ is nonempty.

If $n=3$, the pair $(\sigma_0,0)$ along with the six functions

\bea
f^{(1,1,1)}&=&e^{\gl x_1} \nonumber\\
f^{(1,1,2)}&=&e^{\gl x_2} \nonumber\\
f^{(1,2,1)}&=&e^{\gl x_1} \nonumber\\
f^{(1,2,2)}&=&e^{\gl x_3} \nonumber\\
f^{(1,3,1)}&=&e^{\gl x_1} \nonumber\\
f^{(1,3,2)}&=&e^{-\gl x_2-c}\;, \nonumber\eea
extended appropriately to a set of $2ML$ functions $\mathbf{f}_0$ as in the $n=2$ case, is easily seen to belong to $Y_{ML}$ and $X_{ML}$.
This proves the first statement.

We now prove second statement.
The change of the unknown function $u\mapsto \sqrt{e^\sigma}u$ transforms the differential equation in (\ref{qpat}) into

\be L_qu:=(-\Delta+q(x))u=0 \ee
where $q=e^\gamma\Delta\sqrt{e^\sigma}/\sqrt{e^\sigma}$.
This equation has CGO solutions.

Let

\bea
\boldrho_{m,l,q}&=&\frac{\rho_{m,l,q}}{\sqrt2}(\boldk_m+i\kperp_m) \\
&=&
\begin{cases}
\frac{\rho_{m,l,q}}{\sqrt2}(\boldk_m+i\boldem)  \quad 1\leq m\leq n\\
\frac{\rho_{m,l,q}}{\sqrt2}(\boldk_m+i\mathbf{e}_n) \quad m>n
\end{cases}
\eea
where $\boldk_m$ can be chosen to be any vector perpendicular to $\kperp_m$.
For each $m$ we set $\boldrho_{m,1,1}=\boldrho_{m,l,1}$ and $\boldrho_{m,1,2}=\boldrho_{m,l,2}$ for $l\geq3$.
We take the $\rho_{m,l,q}$ to be similar in size (differing by at most 1, say),
rationally independent, and also such that the differences $\rho_{m,1,1}-\rho_{m,1,2}$
are rationally independent from $\rho_{m,2,1}-\rho_{m,2,2}$.  Let us also define

\be \rho:=\min_{m,l,q}\rho_{m,l,q}\;.\ee

As before, we define a chain of three deformations:

\bea \label{E:deformationqpat1}
(\sigma,\gamma,f^{(1,1,1)},\ldots,f^{(M,L,2)}) &\leadsto& (\sigma,\gamma,if^I_{\sigma,\gamma,\boldrho_{1,1,1}},\ldots,if^I_{\sigma,\gamma,\boldrho_{M,L,2}})\\
\label{E:deformationqpat2} &\leadsto& (\sigma_0,0,if^I_{\boldrho_{1,1,1}},\ldots,if^I_{\boldrho_{M,L,2}})\\
\label{E:deformationqpat3} &\leadsto& (\sigma_0,0,\mathbf{f}_0)\;. \eea
The first deformation (\ref{E:deformationqpat1})
is defined by
\be\label{E:fmlqt}
(\sigma,\gamma,\mathbf{f}_t)=(\sigma, \gamma,f_{1,1,1;t} ,\ldots,f_{M,L,2;t})\quad 0\leq t\leq 1\;, \ee
where $f_{m,l,q;t} = (1-t)f_{m,l,q} + itf^I_{\boldrho_{m,l,q}}$,
the second deformation (\ref{E:deformationqpat2}) is defined by letting

\bea
\sigma_t&=&(1-t)\sigma+t\sigma_0 \nonumber\\
\gamma_t&=&(1-t)\gamma\quad 0\leq t\leq1\;,\nonumber\eea
and the third (\ref{E:deformationqpat3}) is defined in a way similar to (\ref{E:deformationqpat1}).

The operator $\boldAsiggamf$ is left semi-Fredholm along the first and third deformations, which follows from the following lemma.

\begin{lemma}
Let $(\sigma,\gamma,\mathbf{f})\in X_{ML}$, and let $f_{m,l,q;t}$ be as in (\ref{E:fmlqt}).  Then
$\left(\sigma,\gamma,\mathbf{f}_t\right)\in Y_{ML}$ for $\rho$ being sufficiently large.

\end{lemma}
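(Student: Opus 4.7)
The plan is to reduce left semi-Fredholmness of $\boldAsiggamf$ along the first deformation to a pointwise spanning condition: for each $(x,\xi)\in\overline{\gO''}\times(\R^n\setminus\{0\})$ at least one of the 2-by-2 blocks (\ref{E:determinants}) must be invertible, equivalently $\xi\cdot V_{m,l;t}(x)\neq 0$ for some $(m,l)$. A sufficient condition is that $\{V_{m,l;t}(x)\}_{m,l}$ spans $\C^n$ at every $x\in\overline{\gO''}$. At $t=0$ this is the hypothesis $(\sigma,\gamma,\mathbf{f})\in X_{ML}$, so the work lies in the range $0<t\leq 1$.

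The first step is to decompose, using linearity of (\ref{qpat}) in $(f,u)$, $u^{(m,l,q)}_t = (1-t)u^{(m,l,q)}+itu^I_{\sigma,\gamma,\boldrho_{m,l,q}}$. Expanding $V_{m,l;t}$ and separating real and imaginary parts yields
$$\mathrm{Im}\,V_{m,l;t}= t(1-t)\bigl[u^{(m,l,2)}\nabla u^I_{\boldrho_{m,l,1}} - u^{(m,l,1)}\nabla u^I_{\boldrho_{m,l,2}} + u^I_{\boldrho_{m,l,2}}\nabla u^{(m,l,1)} - u^I_{\boldrho_{m,l,1}}\nabla u^{(m,l,2)}\bigr].$$
Since the requirement $f^{(m,l,q)}\geq 0$ in the definition of $X_{ML}$ together with the maximum principle makes each $u^{(m,l,q)}$ bounded below by a positive constant on $\overline{\gO''}$, the estimate (\ref{E:CGOgradbigger}) (adapted to the Schr\"odinger form $L_qu=0$ obtained from (\ref{qpat}) by $u\mapsto\sqrt{e^\sigma}u$) shows that the first two terms dominate, giving
$$\mathrm{Im}\,V_{m,l;t} = t(1-t)\bigl[u^{(m,l,2)}\nabla u^I_{\boldrho_{m,l,1}} - u^{(m,l,1)}\nabla u^I_{\boldrho_{m,l,2}}\bigr]\bigl(1+O(1/\rho)\bigr)$$
uniformly in $x$ and $t$ as $\rho\to\infty$.

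Next I would plug in the CGO gradient formula of Proposition \ref{P:cgo}, namely $\nabla u^I_{\boldrho_{m,l,q}}(x) \sim \tfrac{\rho_{m,l,q}}{\sqrt2}e^{\frac{\rho_{m,l,q}}{\sqrt2}\boldk_m\cdot x-\sigma(x)/2}\bigl(\boldk_m\sin\theta_{m,l,q}+\kperp_m\cos\theta_{m,l,q}\bigr)$, so that the leading $\mathrm{Im}\,V_{m,l;t}(x)$ lies in $\mathrm{span}\{\boldk_m,\kperp_m\}$ with trigonometric coefficients weighted by the positive real solutions $u^{(m,l,q)}(x)$. Because $M\geq n$ and the $\boldk_m$ are chosen so that $\bigcup_m\mathrm{span}\{\boldk_m,\kperp_m\}$ exhausts $\R^n$, varying $m$ yields vectors covering every direction of $\R^n$, as long as at each fixed $x$ the trigonometric coefficients do not all vanish simultaneously.

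The hard part is ruling out this simultaneous vanishing. I would use two features built into the construction preceding the lemma: (i) rational independence of the $\rho_{m,l,q}$'s, which (with the origin excluded from $\overline{\gO}$, as in the AET argument) prevents the phases $\theta_{m,l,q}(x)=\rho_{m,l,q}\kperp_m\cdot x/\sqrt 2$ from being jointly commensurate at any $x\in\overline{\gO''}$; and (ii) the $X_{ML}$ hypothesis that for each $m$ some $l\geq 3$ satisfies $u^{(m,1,1)}(x)/u^{(m,1,2)}(x)\neq u^{(m,l,1)}(x)/u^{(m,l,2)}(x)$, which guarantees that even when $\boldrho_{m,1,q}=\boldrho_{m,l,q}$, the leading imaginary parts of $V_{m,1;t}$ and $V_{m,l;t}$ are linearly independent modulo $O(1/\rho)$, jointly recovering both generators of $\mathrm{span}\{\boldk_m,\kperp_m\}$. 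Combining (i), (ii), and the $\boldk_m$-spanning, for $\rho=\min_{m,l,q}\rho_{m,l,q}$ sufficiently large the vectors $\mathrm{Im}\,V_{m,l;t}(x)$ span $\R^n$ at every $x\in\overline{\gO''}$ and every $0<t<1$. At $t=1$ the imaginary part vanishes, but $V_{m,l;1}=-\bigl[u^I_{\boldrho_{m,l,2}}\nabla u^I_{\boldrho_{m,l,1}}-u^I_{\boldrho_{m,l,1}}\nabla u^I_{\boldrho_{m,l,2}}\bigr]$ is a real vector field whose spanning follows from the same CGO and rational-independence analysis, so we conclude $(\sigma,\gamma,\mathbf{f}_t)\in Y_{ML}$ uniformly for $0\leq t\leq 1$.
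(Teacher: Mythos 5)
Your proposal follows essentially the same route as the paper's proof: the same reduction of left semi-Fredholmness to pointwise spanning via the $2\times2$ symbol blocks, the same formula for $\mathrm{Im}\,V_{m,l;t}$, the same use of (\ref{E:CGOgradbigger}) and the positivity of the $u^{(m,l,q)}$ to isolate the dominant CGO-gradient terms, and the same appeal to rational independence of the $\rho_{m,l,q}$ together with the ratio condition in $X_{ML}$ to get spanning of $\R^n$ for large $\rho$. The only difference is cosmetic: you handle the endpoint $t=1$ inside the lemma, whereas the paper absorbs that endpoint into its analysis of the second deformation (\ref{E:deformationqpat2}).
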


%

{\it Proof of the lemma.}
%



Let $u_t^{(m,l,q)}=(1-t)u^{(m,l,q)}+itu_{\boldrho_{m,l,q}}^I$ and $V_t^{(m,l)}$ be the vector field
formed by $u_t^{(m,l,1)}$ and $u_t^{(m,l,2)}$.  Then the imaginary part of $V_t^{(m,l)}$
is

\bea t(1-t)\Big(u_{\boldrho_{m,l,2}}^I\nabla u^{(m,l,1)} - u^{(m,l,1)}\nabla u_{\boldrho_{m,l,2}}^I\nonumber\\
\label{E:ImpartVt}
-u_{\boldrho_{m,l,1}}^I\nabla u^{(m,l,2)} + u^{(m,l,2)}\nabla u_{\boldrho_{m,l,1}}^I\Big)  \;.\eea
By the construction of a left regularizer in the proof of Theorem 4.1 in \cite{KuchStein}, the lemma
will be proved if we can show that these vector fields span $\R^n$ at each $x\in\overline{\gO''}$ for $m=1,\ldots,M$, $l=1,\ldots,L$.
Using (\ref{E:CGOgradbigger}) and the fact that the functions $u^{(m,l,q)}$ are bounded below by a positive constant on $\overline{\gO''}$, the vector field in (\ref{E:ImpartVt}) equals

\be t(1-t) \Big(- u^{(m,l,1)}\nabla u_{\boldrho_{m,l,2}}^I + u^{(m,l,2)}\nabla u_{\boldrho_{m,l,1}}^I\Big)\left(1+O\left(\frac{1}{\rho}\right)\right)\;. \ee
%
Hence, for $\rho$ sufficiently large, we have that
the imaginary parts of the vector fields $V_t^{(m,l)}$ span $\R^n$ at each point $x\in\overline{\gO''}$ if the vector fields

\be\label{E:eezgoodno}  - u^{(m,l,1)}\nabla u_{\boldrho_{m,l,2}}^I + u^{(m,l,2)}\nabla u_{\boldrho_{m,l,1}}^I \ee
span $\R^n$ at each point.
We recall that since

\be \nabla u^I_{\boldrho_{m,l,q}} = e^{\frac{\rho_{m,l,q}}{\sqrt2}\boldk_m\cdot x}
(\sin\theta_{m,l,q}\boldk_m+\cos\theta_{m,l,q}\kperp_m)\;,\ee
the vectors $\nabla u_{\boldrho_{m,l,1}}^I$ and $\nabla u_{\boldrho_{m,l,2}}^I$ span the $\boldk_m\boldem$-plane unless $\theta_{m,l,1}$
and $\theta_{m,l,2}$ differ by a factor of $\pi$, and for a given $x$ this can happen for at most one $m$ and $l$.
The requirement that
the ratios $(u^{(m,1,1)},u^{(m,1,2)})$ and $(u^{(m,l,1)},u^{(m,l,2)})$ be unequal for some $l\geq3$ ensures that the vector fields (\ref{E:eezgoodno})
are not all parallel, and so they span the $\boldk_m\boldem$-plane as well.  This proves the lemma.
        $\Box$\\

To see that $\boldAsiggamf$ is left semi-Fredholm along the second deformation (\ref{E:deformationqpat2})
we consider the vector fields $V_{ML}$ formed by the CGO solutions $u^I_{\boldrho_{m,l,q}}$.
To top order in $\rho$,

\bea V_{m,l} &=& e^{\sqrt2(\rho_{m,l,1}+\rho_{m,l,2})\boldk_m\cdot x}
\big(\sin\theta_{m,l,1}(\sin\theta_{m,l,2}\boldk_m+\cos\theta_{m,l,2}\kperp_m) \nonumber\\
&-&\sin\theta_{m,l,2}(\sin\theta_{m,l,1}\boldk_{m,l,1}+\cos\theta_{m,l,1}\kperp_m)\big)  \nonumber\\
\label{E:cgovml} &=&e^{\sqrt2(\rho_{m,l,1}+\rho_{m,l,2})\boldk_m\cdot x}\sin(\theta_{m,l,1}-\theta_{m,l,2})\kperp_m\;.
\eea
Since for each $x$ and pair $m,l$, $\sin(\theta_{m,l,1}-\theta_{m,l,2})$ and $\sin(\theta_{m,l,3}-\theta_{m,l,4})$ cannot both vanish,
the vector fields (\ref{E:cgovml}) span $\R^n$ at each point $x\in\gO$.
This proves that $\boldAsiggamf$ is left semi-Fredholm along (\ref{E:deformationqpat2}),
completing the proof of the second statement of Theorem \ref{T:XYqpat}.
$\;\Box$
\end{proof}


We are now ready to state and prove the main theorem of this section.

\begin{theorem}\label{T:QPAT}
Let $n=2$ or $3$, and let $M\geq n$, $L\geq3$.
 Then the operator $\boldAsiggamf$ is injective for an open dense set
 of $(\sigma,\gamma,\mathbf{f})\in \overline{X_{ML}}$.
\end{theorem}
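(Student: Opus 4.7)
The plan is to mirror closely the argument used to prove Theorems \ref{T:testcase} and \ref{T:AET}, assembling three ingredients that have already been established in the paper: the analyticity of the linearized operator in its parameters (Lemma \ref{L:qpatdependence}), the existence of a concrete point where the linearization is injective (Lemma \ref{L:QPATpoint}), and the fact that this point sits in the same connected semi-Fredholm component as all of $X_{ML}$ (Theorem \ref{T:XYqpat}). These ingredients will then feed into the abstract analytic Fredholm machinery of Section \ref{S:general}.

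First, I would note that by Lemma \ref{L:qpatdependence} each block $A_j$ in $\boldAsiggamf$ depends analytically on $(\sigma,\gamma,f^{(m,l,q)})\in C^\infty(\overline{\gO})^2\times H^{1/2}(\dgO)$ as an operator from $\Lad\times(H^1_0(\gO')\cap\Lad)$ into $L^2(\gO')$. After multiplication by the cutoff $\chi$, the vector-valued map $(\sigma,\gamma,\mathbf{f})\mapsto \boldAsiggamf$ is analytic with values in $L(\Lad\times(H^1_0(\gO')\cap\Lad),\,L^2(\gO')^{2ML})$. Using the continuous embedding $L^2(\gO')^{2ML}\hookrightarrow$ the relevant target space and the local uniform boundedness of the pseudo-differential symbols on $\overline{Y_{ML}^0}$, Lemma \ref{L:ana_wrong_spaces} then upgrades this to analyticity as a map into $L(L^2(\gO')\oplus H^1_0(\gO'),\,H^1(\gO)^{2ML})$ on all of $Y_{ML}^0$.

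Second, since $\Phi_l$ is open in the operator norm topology, the set $V\subset C^\infty(\overline{\gO})^2\times H^{1/2}(\dgO)^{2ML}$ on which $\boldAsiggamf\in\Phi_l(L^2(\gO')\oplus H^1_0(\gO'),H^1(\gO)^{2ML})$ is open and dense in $\overline{Y_{ML}^0}$, and by Theorem \ref{T:XYqpat} (the inclusion $X_{ML}\subset Y_{ML}^0$), we may assume $V$ contains an open neighborhood of $X_{ML}$ in $\Real(C(\overline{\gO})^2\times H^{1/2}(\dgO)^{2ML})$. By Lemma \ref{L:QPATpoint} together with the construction in part (1) of Theorem \ref{T:XYqpat}, the operator $\mathbf{A}_{\sigma_0,0,\mathbf{f}_0}$ is injective at the real point $(\sigma_0,0,\mathbf{f}_0)\in \Real Y_{ML}^0$.

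Finally, applying the second statement of Theorem \ref{T:ZKKP2}, in the real form given by the second statement of Corollary \ref{C:real}, to the analytic family $\boldAsiggamf$ on $\Real V$, I conclude that there exists a set $W$, open and dense in $\Real V$, at which $\boldAsiggamf$ is injective. Restricting from $\Real V$ to the topological subspace $\overline{X_{ML}}$ (which is contained in $\Real V$ as a dense subset), the intersection $W\cap \overline{X_{ML}}$ is open and dense in $\overline{X_{ML}}$, which is the desired statement. The only genuinely delicate step is the invocation of Theorem \ref{T:XYqpat} to guarantee that the injectivity point of Lemma \ref{L:QPATpoint} lies in the same connected component as an arbitrary $(\sigma,\gamma,\mathbf{f})\in X_{ML}$; everything else is a formal application of the abstract analytic-Fredholm framework, and the essential geometric work has already been done there via the complex-geometrical-optics deformations.
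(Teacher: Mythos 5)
Your proposal is correct and follows essentially the same route as the paper's own proof: analyticity from Lemma \ref{L:qpatdependence} upgraded via Lemma \ref{L:ana_wrong_spaces}, the injectivity point supplied by Lemma \ref{L:QPATpoint}, connectedness through $Y_{ML}^0$ from Theorem \ref{T:XYqpat}, and the generic-injectivity conclusion from Theorem \ref{T:ZKKP2} in the real form of Corollary \ref{C:real}, followed by restriction to $\overline{X_{ML}}$. No substantive differences to report.
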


\begin{proof}
As an immediate consequence of Lemma \ref{L:qpatdependence}, the operators $\boldAsiggamf$ depend analytically
on $(\sigma,\gamma,\mathbf{f})\in C(\overline{\gO})^2\times H^{1/2}(\dgO)^m$
as a family of operators mapping $\Lad^2$ into $L^2(\gO')^{2ML}$.  By Lemma \ref{L:ana_wrong_spaces},
$\boldAsiggamf$ is an analytic family of operators on $Y_{ML}^0$ mapping $L^2(\gO')\oplus H^1_0(\gO')$ into $H^1(\gO')^{2ML}$.
There exists a subset $V\subset C(\overline{\gO})^2\times H^{1/2}(\dgO)$, open and dense in $\overline{Y_{ML}^0}$, such that
$\boldAsiggamf\in \Phi_l(L^2(\gO')\oplus H^1_0(\gO'),H^1(\gO')^{2ML})$ for $(\sigma,\gamma,\mathbf{f})\in V$.
Since $X_{ML}\subset Y_{ML}^0$, we may assume that $V$ contains an open neighborhood of $X_{ML}$ in $\Real(C(\overline{\gO})\times H^{1/2}(\dgO)^m)$.
By Lemma \ref{L:QPATpoint} $A_{\sigma_0,0,\mathbf{f}_0}$ is an injective operator, so $\Real Y_{ML}^0$ contains a point at which $\boldAsiggamf$ is injective.
Then the second statement of Theorem \ref{T:ZKKP2}
(in the version of the first statement of Corollary \ref{C:real}) applied to $\Real V$ implies that there exists a set $W$, open and dense in $\Real V$,
where the operators are injective.
Since the restriction of an open dense set to a dense topological subspace is still open dense in that subspace, $W$ is open dense in $\overline{X_{ML}}$.\end{proof}
 \hfill $\Box$

\section{Proofs of some lemmas}\label{S:lemmas}

\subsection{Proof of Lemma \ref{L:positive}}

\begin{proof}
 Equation (\ref{E:maineq}) implies that $G_\mu$ is the Schwartz kernal of the operator mapping
 $f$ to $u$ in the boundary value problem (\ref{E:BVP_c}) (see, e.g., \cite{LionsNotes}).
 By elliptic regularity, $G_\mu(\eta,\cdot)$ is a smooth function on $\gO'$
 \cite{EvansPDE,taylor}.
 By the maximum principle, the operator $L_\mu^{-1}$ that maps a function $f$ to the solution
 $u$ of (\ref{E:BVP_c}) has the property that if $f\geq0$ then $u\geq0$ \cite{EvansPDE,taylor}.
 Thus $G_\mu$ is a nonnegative function.  Since $\gO'$ is compactly contained
 in $\gO$, the strong maximum principle
 implies that $G_\mu(\eta,\cdot)$ is bounded away from 0 on $\gO'$, proving the  lemma. \end{proof} \hfill $\Box$

\subsection{Proof of Lemma \ref{L:green}}

\begin{proof}

Let $L(X,Y)$ and $\mathrm{HS}(X,Y)$ denote the space of bounded operators and Hilbert-Schmidt operators, respectively, from $X$ to $Y$.

Consider the chain of maps

\bea \Lad &\rightarrow& L(H^2(\gO),L^2(\gO)) \rightarrow L(L^2(\gO),H^2(\gO))\nonumber\\
\label{E:fr} &\rightarrow& \mathrm{HS}(L^2(\gO),L^2(\gO)) \rightarrow L^2(\gO\times\gO)\\
\mu &\mapsto& L_\mu \mapsto L_\mu^{-1}\nonumber\\
 &\mapsto& L_\mu^{-1} \mapsto G .\eea
The last map in (\ref{E:fr}) is the mapping of an operator to its integral kernel.

The first two maps are Fr\'echet differentiable (as in \cite{KuchStein} for example).
Maps from $L^2(\gO)\rightarrow H^2(\gO)$ are Hilbert-Schmidt operators when considered as maps from $L^2(\gO)\rightarrow L^2(\gO)$
(see \cite{Mau}, Theorem 4).
By the Hilbert-Schmidt kernel theorem, the last map in (\ref{E:fr}) is a linear isomorphism.




Let $x\in\gO\backslash\gO''$.  By elliptic regularity applied to equation (\ref{E:maineqxitoo}), $G(x,\cdot)$
lies in $H^2(\gO')$, and $\norm{G(x,\cdot)}_{H^2(\gO)}
\leq C \norm{G(x,\cdot)}_{L^2(\gO)}$. (see \cite{Lions}, Theorem 2.3.2).  The constant $C$ is independent of $x$, as $x$ and $\xi$
are separated by a minimum positive distance, implying $G(x,\cdot)$ is a bounded function on $\gO'$.
Similarly, if $\xi\in\gO'$ is fixed, then $G(\cdot,\xi)$ solves the regular
boundary value problem in $\gO\backslash\gO''$

\bea (-\Delta +1)u&=&0 \nonumber\\
Bu|_{\partial\gO}&=&0\nonumber\\
Bu|_{\partial\gO''}&=&G(\cdot, \xi),\eea
as $\mu\equiv0$ outside $\gO'$.
Boundary elliptic regularity (as in \cite{Lions}, Theorem 2.5.1) gives us that $G(\cdot,\xi)$ lies in $H^2(\gO\backslash\gO'')$ and
$\norm{G(\cdot,\xi)}_{H^2(\gO\backslash\gO'')} \leq C \norm{G(\cdot,\xi)}_{L^2(\gO\backslash\gO'')}$ for a constant $C$ independent of $\xi$.
We have the corresponding estimate


\be
\label{E:embed}    \norm{G}_{H^2(\gO\backslash\gO''\times\gO')}
\leq C\norm{G}_{L^2(\gO\times\gO)}. \ee
The constant $C$ in (\ref{E:embed}) depends continuously on $\norm{\mu}_{L^\infty(\gO)}$, but this is a bounded quantity as we need only consider those $\mu$ which
deviate slightly from $\mu_0$.
Therefore, in the composition of maps

\bea \Lad&\rightarrow& L(L^2(\gO),H^2(\gO)) \rightarrow H^2(\gO\backslash\gO''\times\gO') \nonumber\\
\mu&\mapsto&L^{-1}_\mu\mapsto G\;,
\eea
the second map, which is linear, is continuous on the range of the first; hence the composition is Fr\'echet differentiable.


%
The lemma will thus follow once we establish the continuity of the map

\bea H^2(\gO\backslash\gO''\times\gO')&\rightarrow& H^2(\gO') \nonumber\\
\label{E:xionly} G&\mapsto& G(\eta,\cdot) .\eea
By the Sobolev embedding theorem (see e.g. \cite{taylor}, Proposition 4.4.3), $G(x,\cdot)\in C^{0,1/2}(\gO')$ for each fixed $x\in\gO\backslash\gO''$,
and $G(\cdot,\xi)\in C^{0,1/2}(\overline{\gO}\backslash\gO'')$ for each fixed $\xi\in\gO'$.
For any $x\in\gO\backslash\gO''$, we have

\bea |G(\eta,\xi)|&\leq& |G(x,\xi)|+\norm{G(\cdot,\xi)}_{C^{0,1/2}(\overline{\gO}\backslash\gO'')}|x-\eta|^{1/2} \nonumber\\
&\leq&|G(x,\xi)| +C(\gO)\norm{G(\cdot,\xi)}_{C^{0,1/2}(\overline{\gO}\backslash\gO'')}  .\eea
Averaging over $\overline{\gO}\backslash\gO''$ and using the Cauchy-Schwarz inequality we get

\bea |G(\eta,\xi)|&\leq& \frac{1}{\mathrm{Vol}(\overline{\gO}\backslash\gO'')}\int_{\overline{\gO}\backslash\gO''} |G(x,\xi)|\:dx \nonumber\\
&+& C(\gO) \norm{G(\cdot,\xi)}_{C^{0,1/2}(\overline{\gO}\backslash\gO'')}  \nonumber\\
&\leq& C(\gO,\gO'')\left(\norm{G(\cdot,\xi)}_{L^2(\overline{\gO}\backslash\gO'')}
+\norm{G(\cdot,\xi)}_{C^{0,1/2}(\overline{\gO}\backslash\gO'')}\right) \eea
Taking the $L^2(\gO')$-norm in $\xi$ and using elliptic regularity, we obtain the continuity of (\ref{E:xionly}). \end{proof} \hfill $\Box$


\subsection{Proof of Lemma \ref{L:dependence}}

\begin{proof}

First we note that the dependence of $u_{\sigma,f}\in H^1(\gO)$ on $\sigma$ is analytic (see, for example, \cite[Lemma 2.1]{KuchStein}),
and the dependence on $f$ is linear.  It remains to show that the dependence of the map

\bea \Lad&\rightarrow& H^1_0(\gO) \nonumber\\
\label{E:rho2v} \rho&\mapsto& v(\rho)
\eea
on $(\sigma,f)$ [as defined in equation (\ref{E:v})] is analytic, as all other operations in equation (\ref{E:Asigma}) are either algebraic or differentiation.
The operator $L_{\sigma}=-\nabla\cdot(e^{2\sigma/p}\nabla)\in L(H^1_0(\Omega),H^{-1}(\Omega))$ is invertible (see the simplest case of this statement in
\cite[Ch. 5, Proposition 1.1]{taylor} and general results in \cite{BerKrRo,Lions}); $L_{\sigma}$ is thus invertible in a neighborhood of $\sigma$ in $L^\infty(\gO)$.
This inverse $L_{\sigma}^{-1}:H^{-1}(\gO)\rightarrow H^1(\gO)$ depends analytically on $\sigma$, since $L_\sigma$ does, and the operation of taking the inverse of an
operator is known to be analytic on the domain of invertible operators (e.g., \cite{ZKKP}).  It is then evident from (\ref{E:v})
that the map (\ref{E:rho2v}) is analytic.\end{proof}  \hfill $\Box$


\subsection{Proof of Lemma \ref{L:qpatdependence}}

\begin{proof}
 As in the proof of Lemma \ref{L:dependence}, we first note that the dependence of $u_0^{(j)}\in H^1(\gO)$ on $(\sigma_0,\gamma_0)\in L^\infty(\gO)^2$ is analytic 
and the dependence on $f$ is linear.  The operator $L_{\sigma_0,\gamma_0}(\cdot)=-\nabla\cdot(e^{\sigma_0}\nabla)(\cdot)+e^{\gamma_0}(\cdot)\in L(H^1_0(\Omega),H^{-1}(\Omega))$ is invertible \cite{BerKrRo}.
Since the operation of taking the inverse of an operator is analytic, one observes that the expression in (\ref{E:dF})
depends analytically on $(\sigma_0,\gamma_0,f)$.  Hence the map (\ref{E:qpatdependence}) is analytic.
\end{proof}  \hfill $\Box$


\section{Remarks}\label{S:remarks}

\begin{enumerate}

\item
The reader notices that having the background coefficients lie in $C^\infty_0(\gO')$, as we do in Theorems \ref{T:testcase} and \ref{T:AET},
forces their values near the boundary to be constant. In turn, this allows us to work somewhat away from the boundary, which makes things simpler. One can generalize to the case of known (variable) values near the boundary,
e.g. by changing the definition of the space $\Lad$ to be the space of $L^\infty$-functions that equal some prescribed function near the boundary.
This is essentially what we do in Section \ref{S:uniquenessQPAT}, in considering only perturbations $\rho$ and $\nu$ that
are supported away from the boundary.
However, the theory of overdetermined elliptic boundary value problems (originated by \cite{Spencer}) has been well developed
(see, e.g. the books \cite{GudKrein,Samb} and paper \cite{Solon}).
This should allow one to relax this condition.
And indeed, this was partially done in \cite{BalInternal,IOut2,MonSt}.


\item Our goal was to prove genericity of linearized uniqueness, where ``genericity'' is understood in the strongest possible sense, namely ``except for an analytic subset.'' As we have already mentioned, doing so requires an alternative approach, such as working in the classes of pseudo-differential operators with symbols of finite smoothness (such as, e.g., in \cite{TaylorTools}). This is done in the next paper \cite{Stein3}, which will also contain some local (non-linear) uniqueness results.
\end{enumerate}

\section{Acknowledgments}\label{S:acknow}

The work of the first author was partly supported by the US NSF Grants DMS 0908208
and DMS 1211463, as well as by the DHS Grant 2008-DN-077-ARI018-04. The work of both authors was partially supported by KAUST through IAMCS. Thanks also go to Y.~Pinchover, P.~Stefanov, G.~Uhlmann, and T.~Widlak for helpful comments and references.


\bibliography{KuchStein2_D22}
\bibliographystyle{abbrv}

\end{document}